\newcommand{\hide}[1]{}
\def\textcolor#1{}
\newcommand{\R}{\mathbb{R}}
\newcommand{\Hh}{\mathbb{H}}
\renewcommand{\S}{\mathbb{S}}
\newcommand{\Cc}{\mathcal{C}}
\newcommand{\Ss}{\mathcal{S}}
\newcommand{\Bb}{\mathcal{B}}
\newcommand{\B}{{\bm B}}
\newcommand{\tF}{\widetilde{F}}
\newcommand{\tC}{\widetilde{C}}
\newcommand{\rr}{\mathfrak{r}}
\renewcommand{\tilde}{\widetilde}
\renewcommand{\rho}{\varrho}
\renewcommand{\phi}{\varphi}
\newcommand{\eps}{\varepsilon}
\renewcommand{\theta}{\vartheta}
\DeclareMathOperator{\Cl}{Cl}
\DeclareMathOperator{\inter}{int}
\DeclareMathOperator{\dist}{dist}
\newcommand{\sm}{\setminus}
\renewcommand{\ge}{\geqslant}
\renewcommand{\le}{\leqslant}
\theoremstyle{theorem}
\newtheorem{theorem}{Theorem}[section]
\newtheorem{lemma}[theorem]{Lemma}
\newtheorem{proposition}[theorem]{Proposition}
\newtheorem{corollary}[theorem]{Corollary}
\newtheorem{MainTheorem}{Theorem}
\newtheorem*{BorisenkoConjecture}{Conjecture}
\newtheoremstyle{Intro}
{}
{}
{\itshape}
{}
{\scshape}
{.}
{.5em}
{}
\theoremstyle{Intro}
\newtheorem*{theoremIPClassical}{\textsc{The Classical Isoperimetric Inequality}}
\newtheorem*{theoremIPBall}{\textsc{The Reverse Isoperimetric Inequality due to Ball}}
\newtheorem*{theoremRIPConcave}{The Reverse Isoperimetric Inequality for $\lambda$-concave Bodies}
\theoremstyle{definition}
\newtheorem{definition}[theorem]{Definition}
\theoremstyle{remark}
\newtheorem*{remark}{\textsc{Remark}}
\newcounter{reminder}
\newtheoremstyle{claim}
  {}
  {}
  {\itshape}
  {0pt}
  {\scshape}
  {.}
  { }
  {\thmname{#1}\thmnumber{ #2}\thmnote{ (#3)}}
\theoremstyle{claim}
\newtheorem{claim}[theorem]{Claim}
\numberwithin{equation}{section}
\title[Reverse isoperimetric problems under curvature constraints]{Reverse isoperimetric problems under curvature constraints}
\author[Kostiantyn Drach]{Kostiantyn Drach}\thanks{\noindent 
	{ \it Keywords: } $\lambda$-convexity; reverse isoperimetric inequality; inradius.}
\author[Kateryna Tatarko]{Kateryna Tatarko}
	\thanks{{\it \ 2020 Mathematics Subject Classification:} 52A30, 52A38 (Primary); 52A27, 52A40, 52B60 (Secondary)}\thanks{\ The first author is supported by ERC Advanced Grant ``SPERIG'' (\#885707); the second author is partially supported by NSERC Discovery Grant number 2022-02961}
\date{}
\address{Institute of Science and Technology Austria, Am Campus 1, 3400 Klosterneuburg, Austria}
\email{kostiantyn.drach@ist.ac.at, kostya.drach@gmail.com}
\address{Department of Pure Mathematics, University of Waterloo, Waterloo, ON, N2L 3G1, Canada}
\email{ktatarko@uwaterloo.ca}
\begin{document}

\begin{abstract}
	In this paper we solve several reverse isoperimetric problems in the class of \emph{$\lambda$-convex} bodies, i.e., convex bodies whose curvature at each point of their boundary is bounded below by some $\lambda > 0$.
	
	We give an affirmative answer in $\mathbb{R}^3$ to a conjecture due to Borisenko which states that the \emph{$\lambda$-convex lens}, i.e., the intersection of two balls of radius $1/\lambda$, is the unique \emph{minimizer} of volume among all {$\lambda$-convex bodies} of given surface area.
	
	Also, we prove a \emph{reverse inradius inequality}: in model spaces of constant curvature and arbitrary dimension, we show that the $\lambda$-convex lens (properly defined in non-zero curvature spaces) has the smallest inscribed ball among all $\lambda$-convex bodies of given surface area. This solves a conjecture due to Bezdek on minimal inradius of isoperimetric {ball-polyhedra} in $\R^n$.  
 
\end{abstract}

\maketitle

{\hypersetup{linktoc=page}
\tableofcontents
}

\section{Introduction}
\subsection{Classical and reverse isoperimetric problems}\label{classical}

\begin{center}
\emph{What body has the largest volume for a given surface area?} 
\end{center}

This classical \emph{isoperimetric problem} has been studied, generalized and solved in a great variety of contexts, for example, in the spaces of constant curvature, etc. (see, e.g., \cite{Ros}).  In the $n$-dimensional Euclidean space $\mathbb R^n$, for instance, it leads to the following fundamental inequality:  

\begin{theoremIPClassical}
Let $n \ge 2$. Let $K \subset \R^n$ be a domain with surface area $|\partial K|$ and volume $|K|$, and let $B \subset \mathbb R^n$ be a ball. If  $|\partial K| = |\partial B|$, then $|K| \le |B|$, and equality holds if and only if $K$ is a ball.
\end{theoremIPClassical}

At the same time, one may pose the following \emph{reverse isoperimetric problem}:

\begin{center}
\emph{What body has the smallest volume for a given surface area?}
\end{center}
The reverse isoperimetric problem has received a great deal of attention in recent years. Observe that for a given surface area, the $n$-dimensional volume can be arbitrary small (think of a flat pancake, i.e., an oblong and thin domain). Hence, as stated, the reverse isoperimetric problem has a trivial solution. In his celebrated work, Ball \cite{Bal1, Bal2} overcame this issue by restricting the problem to affine classes of convex bodies. In this setting, the problem turns out to be non-trivial, and Ball showed the following: 

\begin{theoremIPBall}
Let $n \ge 2.$ Let $K \subset \R^n$ be a convex body and $T \subset \R^n$ be a regular simplex. Then there is an affine image $K'$ of $K$ such that $|\partial K'| = |\partial T|$ and $|K'| \ge |T|$. 
\end{theoremIPBall}

The equality case in Ball's reverse isoperimetric inequality was settled later by Barthe \cite{Ba}. We remark that for the classical isoperimetric problem optimizers are balls, while this is no longer the case for the reverse problem.

\subsection{Reverse isoperimetric problem under curvature constraints}

In this paper, we take a different approach and restrict the reverse isoperimetric problem to the class of convex bodies with \emph{uniform curvature constraints}.  
Namely, we require that the boundary of a convex body is either not too flat, or not too curved, in the following sense. 

Let $K$ be a convex body in an $n$-dimensional model space $M^n(c)$ of constant curvature $c \in \R$, $n \ge 2$. We say that $K$ is \emph{$\lambda$-convex} if there exists $\lambda > 0$ so that the principal curvatures $k_i$, $i \in \{1,\ldots, n-1\}$, with respect to inward pointing normals of the boundary $\partial K$ satisfy
\[
k_i(p) \ge \lambda \quad \text{ for each point }p \in \partial K.
\]
If the boundary of $K$ is not smooth, then the bound above is understood in the barrier sense, which we will make precise in Definition~\ref{Def:LambdaConvex} (see also Figure~\ref{Fig:UniformConvexity}). In a similar way, $K$ is called \emph{$\lambda$-concave} if the inequality 
\[
\lambda \ge k_i(p) \ge 0 
\]
holds at each point $p \in \partial K$ and for every $i \in \{1,\ldots,n-1\}$.

It turns out that for $\lambda$-convex and $\lambda$-concave bodies the reverse isoperimetric problem is well-posed and has a non-trivial solution in spaces of constant curvature. In this paper, we will focus on its version in the Euclidean space. Despite the fact that the notions of $\lambda$-concavity and $\lambda$-convexity look dual, the progress of resolving the reverse isoperimetric problem for these classes is not uniform. 

For $\lambda$-concave bodies in $\R^n$, the reverse isoperimetric problem was completely solved by Chernov and the authors in \cite{CDT}, where the following sharp result was shown as part  of a whole range of \emph{reverse quermassintegrals inequalities}:

\begin{theoremRIPConcave}[\cite{CDT}]
Let $n \ge 2$. Let $K~\subset~\R^n$ be a $\lambda$-concave body, and let $S \subset \R^n$ be a \emph{$\lambda$-sausage}, i.e., the convex hull of two balls of radius $1/\lambda$.  If $|\partial K| = |\partial S|$, then $|K| \ge |S|$, and equality holds if and only if $K$ is a $\lambda$-sausage.
\end{theoremRIPConcave}

\noindent An alternative proof of this result was given later in \cite{SY} using different techniques (see also \cite{Piotr} for an elegant proof of a weaker version of the inequality). 

Much less is known about the reverse isoperimetric problem for $\lambda$-convex bodies in $\R^n$, $n \ge 2$. The following conjecture is due to Borisenko (see \cite{CDT}): 

\begin{BorisenkoConjecture}[The Reverse Isoperimetric Inequality for $\lambda$-convex bodies]
Let $n \ge 2$. Let $K \subset \R^n$ be a $\lambda$-convex body, and let $L \subset \R^n$ be a $\lambda$-convex \emph{lens}, i.e., the intersection of two balls of radius $1/\lambda$ (see Figure~\ref{Fig:Lens}). If $|\partial K| = |\partial L|$, then $|K| \ge |L|$, and equality holds if and only if $K$ is a $\lambda$-convex lens.
\end{BorisenkoConjecture}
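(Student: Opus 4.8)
We present the strategy for the case $n = 3$, which is where the inequality is established in full (the higher-dimensional version is not addressed here). The plan is to reduce to a highly structured subclass and then prove a sharp inequality there. Every $\lambda$-convex body is the intersection of all balls of radius $1/\lambda$ containing it, and both volume and surface area are continuous with respect to Hausdorff convergence of convex bodies. Hence it suffices to prove $|K| \ge |L|$ (with $|\partial K| = |\partial L|$) when $K = \bigcap_{i=1}^{m} B(c_i, 1/\lambda)$ is a \emph{ball-polyhedron}: approximate a general $\lambda$-convex body from outside by finite intersections of its supporting $1/\lambda$-balls, prove the inequality for these, and let $m \to \infty$; a lens of the correct surface area appears in the limit because $|\partial K|$ converges. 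The equality case then needs a separate rigidity argument, which we postpone to the end.

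For a ball-polyhedron $K = \bigcap_{i=1}^m B(c_i, 1/\lambda)$ with spherical facets $F_i \subset \partial B(c_i, 1/\lambda)$, the outward unit normal along $F_i$ is $\nu(x) = \lambda(x - c_i)$, so $\langle x, \nu(x)\rangle = \tfrac{1}{\lambda} + \langle c_i, \nu(x)\rangle$ there; integrating and applying the divergence theorem yields
\[
|K| \;=\; \frac{|\partial K|}{3\lambda} \;+\; \frac{1}{3}\sum_{i=1}^{m} \big\langle c_i,\, \vec{a}_i \big\rangle , \qquad \vec{a}_i := \int_{F_i} \nu \, dA ,
\]
and since $\sum_i \vec{a}_i = 0$ the right-hand side does not depend on the choice of origin. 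A direct check shows that for the lens this reproduces the correct volume, and that $\sum_i \langle c_i, \vec{a}_i\rangle \le 0$ in general — equality only for a single ball — which is consistent with every $\lambda$-convex body having circumradius $\le 1/\lambda$ and with the ball being the volume \emph{maximizer} for given surface area. Each $\vec{a}_i$ equals the vector area enclosed by the spherical polygon $\partial F_i$, hence is governed by the edge–graph of $K$. Thus the problem becomes: among all ball-polyhedra with fixed total facet area $\sum_i |F_i|$, minimize $\sum_i \langle c_i, \vec{a}_i\rangle$, and show the minimum is attained exactly when $m = 2$, i.e., when $K$ is a lens.

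To solve this extremal problem I would induct on the number $m$ of defining balls. Given a ball-polyhedron with $m \ge 3$ facets, slide one center $c_m$ along a suitably chosen path while readjusting to keep $|\partial K|$ fixed, and show the volume strictly decreases until either the facet $F_m$ is annihilated — reducing $m$ — or the configuration degenerates to a lens. The tools are: Euler's formula for the facet–edge–vertex complex; the spherical Gauss–Bonnet theorem applied to each facet $F_i$, which accounts for the total exterior angle carried by the edges and vertices and is precisely where dimension three enters; and the spherical isoperimetric inequality on each $1/\lambda$-sphere, bounding $|F_i|$ in terms of $\partial F_i$. The target conclusion is a strict monotonicity: any edge or vertex structure beyond the single circle along which two caps of a lens meet contributes a strictly positive defect to $|K| - \tfrac{|\partial K|}{3\lambda}$, so the optimizer collapses to a two-cap body, namely a $\lambda$-convex lens; the surface-area constraint then selects which lens. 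For the equality case one returns to a general $\lambda$-convex $K$ with $|K| = |L|$: the outer approximants are forced to be asymptotically extremal, and the rigidity coming from the ball-polyhedron analysis — now combined with a first-variation argument carried out in the barrier sense, which is legitimate because $K$ is already known to be Hausdorff-close to a lens — forces $K = L$.

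The heart of the proof, and the step I expect to be the main obstacle, is the extremal problem among ball-polyhedra. The functional $\sum_i \langle c_i, \vec{a}_i\rangle$ couples the center positions to the global combinatorics of the facet complex, so a facet-by-facet estimate is not sharp; one genuinely needs the Gauss–Bonnet and Euler bookkeeping of the edge contributions to see why ``more than two caps'' is strictly worse, and one must keep tight control of the trade-off between volume and surface area as facets are created or destroyed along the deformation. A secondary difficulty is the rigidity step, where the extremal body is non-smooth — it carries the equatorial edge of the lens — so the usual first-variation argument must be run in the barrier sense along the singular set.
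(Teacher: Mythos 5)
Your first two steps are sound: outer approximation by ball-polyhedra is legitimate (it is the paper's Proposition~\ref{Prop:Approx}), and the divergence-theorem identity $3|K| = |\partial K|/\lambda + \sum_i \langle c_i, \vec{a}_i\rangle$ with $\sum_i \vec{a}_i = 0$ is correct, as is the sign claim $\sum_i\langle c_i,\vec{a}_i\rangle \le 0$. But the proof stops exactly where the theorem begins: the assertion that, at fixed surface area, $\sum_i\langle c_i,\vec{a}_i\rangle$ is minimized precisely by the two-cap configuration is the entire content of the result, and what you offer for it is a programme (slide a center, keep $|\partial K|$ fixed, invoke Euler's formula, spherical Gauss--Bonnet, and the spherical isoperimetric inequality, hope for strict monotone decrease of volume until a facet dies) with no actual estimate. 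Sliding one center while ``readjusting to keep $|\partial K|$ fixed'' moves every facet area and every vector $\vec{a}_i$ simultaneously, the functional couples the centers to the global combinatorics, and you give no lemma showing the derivative of the volume along such a path has a sign, nor why the process terminates at a lens rather than at some other critical configuration. As you yourself say, this is ``the main obstacle''; so the proposal is a reduction plus a conjecture, not a proof. The equality step has the same problem: Hausdorff-closeness of $K$ to a lens does not by itself force $K=L$, and a ``first variation in the barrier sense'' is not formulated; the paper has to work for this (it needs the approximation of inner parallel bodies, Lemma~\ref{Lem:Des}, and the strict-reduction Proposition~\ref{Prop:Reduction}).

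For comparison, the paper avoids your global extremal problem altogether. It writes $|K| = \int_0^{r(K)} |\partial K_t|\,dt$ over inner parallel bodies, computes for a $\lambda$-convex polytope the one-sided derivative $\dot f_K(0) = -2\sum_i\beta_i - 2\sum_{i,j} l_{ij}\tan(\gamma_{ij}/2)$, and then uses the Gauss--Bonnet identity $\sum_i\beta_i + 2\sum_{i,j} l_{ij}\tan(\gamma_{ij}/2) + \sum_{v}\sigma(u(K,v)) = 4\pi$ to conclude that, at fixed surface area, the edge term is maximal exactly when the vertex term vanishes, i.e.\ exactly for the lens; a continuation argument in $t$ then gives $|\partial K_t| \ge |\partial L_t|$ for all $t$ and hence $|K| \ge |L|$, with a separate limiting analysis for equality. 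If you want to salvage your route, the Gauss--Bonnet bookkeeping you allude to would have to be turned into a genuine monotonicity or rearrangement inequality for $\sum_i\langle c_i,\vec{a}_i\rangle$; until that is done there is a genuine gap. Note also that the statement as posed concerns all $n\ge 2$ and is only proved in the paper for $n=3$ (and $n=2$), so restricting to $n=3$ is fine, but you should say explicitly that the general case remains open.
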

For $n=2$, the conjecture was confirmed in \cite{BorDr14} (see \cite{FKV} for an alternative proof). Further progress made in  \cite{BorDr15_1, Dr14,BorNA} fully resolved an analogous conjecture in all 2-dimensional spaces of constant curvature and their natural generalizations (e.g., Alexandrov spaces of curvature bounded below; see also \cite{CFP} for a generalization of the inequality in $\R^2$ with additional assumptions on convexity). For $n \ge 3$, partial progress was made so far only in $\R^3$, where the conjecture was confirmed in a restrictive class of surfaces of revolution \cite{Dr3}.  

In this paper, we completely solve Borisenko's conjecture in $\mathbb{R}^3$. Our main result~is
\begin{MainTheorem}[The Reverse Isoperimetric Inequality for $\lambda$-convex bodies in $\R^3$]
	\label{mainthm}
	Let $K \subset \R^3$ be a $\lambda$-convex body and $L \subset \R^3$ be a $\lambda$-convex lens. If $|\partial K| = |\partial L|$, then $|K| \ge |L|.$ Moreover, equality holds if and only if $K$ is a $\lambda$-convex lens.
\end{MainTheorem}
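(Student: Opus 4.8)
The plan is to reduce the $3$-dimensional reverse isoperimetric problem to a geometric comparison principle for $\lambda$-convex bodies, following the general philosophy of the $2$-dimensional argument in \cite{BorDr14} but introducing genuinely new ingredients to handle the surface-area functional. First I would normalize $\lambda = 1$, so that the competitors have principal curvatures $\ge 1$ and the extremal lens $L$ is the intersection of two unit balls. The key structural fact about $\lambda$-convex bodies is a \emph{rolling property}: every boundary point of such a $K$ can be touched from inside by a ball of radius $1/\lambda = 1$ that stays inside $K$ (this is the barrier condition of Definition~\ref{Def:LambdaConvex}). Dually, $K$ is an intersection of unit balls (a ``ball-polyhedron'' in the sense of Bezdek), and its inradius is at most that of $L$ by the reverse inradius inequality alluded to in the abstract. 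I would use these facts to set up a two-point or sweeping comparison.

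The core of the argument, I expect, is a \emph{symmetrization / sliding} scheme. One fixes the ``axis'' of $L$ and foliates space by a one-parameter family of unit spheres (or by planes), and for each leaf one compares the portion of $\partial K$ and the portion of $K$ cut out by consecutive leaves against the corresponding pieces of the lens. Concretely: parametrize by the angle $\theta$ swept by a family of unit balls whose centers move along a segment; show that among all $\lambda$-convex caps with prescribed ``width'' in this foliation, the spherical cap (piece of $L$) minimizes volume for given lateral surface area. This localizes the problem to a one-variable optimization for the meridian curve, where the constraint ``curvature $\ge 1$'' forces the meridian to bend at least as fast as the boundary arc of the lens; a differential-inequality (Sturm-type) comparison then yields that $K$ encloses less area per unit perimeter than $L$ only if it coincides with $L$. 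The equality analysis propagates rigidity in the foliation back to the statement that $K = L$.

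The main obstacle will be that, unlike in the planar case, the surface area is not an integral of curvature and the ``caps'' cut out by the foliation are genuinely two-dimensional, so the reduction to a one-variable problem is not automatic — one cannot simply integrate a pointwise curvature inequality. I anticipate needing a quantitative form of the rolling-ball property together with an area-estimate for $\lambda$-convex surfaces (bounding surface area from below in terms of diameter or inradius), perhaps via a monotonicity formula for the area of the ``spherical shadow'' of $\partial K$ under the foliating spheres. Controlling the boundary pieces where the foliation is tangent to $\partial K$ (and where the meridian comparison degenerates) is the delicate technical point; handling non-smooth $K$ adds a further approximation layer, which I would treat by a standard $\lambda$-convex smoothing (inner parallel bodies of $L$-type, or convolution respecting the curvature bound) and a limiting argument, using the already-established equality case for smooth bodies to get rigidity in the limit.

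Finally, I would assemble the pieces: (i) reduce to smooth, rotationally controlled competitors by approximation; (ii) apply the foliation and the one-variable meridian comparison to get $|K| \ge |L|$ whenever $|\partial K| = |\partial L|$; (iii) run the equality discussion through the foliation to conclude $K$ is a $\lambda$-convex lens. The hard analytic heart is step (ii), and within it the lemma that the spherical cap is volume-minimizing among $\lambda$-convex caps of fixed lateral area in each leaf of the foliation.
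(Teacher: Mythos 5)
Your plan has a genuine gap at its ``hard analytic heart,'' step (ii). You propose to foliate space by unit spheres (or planes), cut $K$ into caps, and reduce each comparison to ``a one-variable optimization for the meridian curve'' settled by a Sturm-type differential inequality. But a general $\lambda$-convex body in $\R^3$ has no meridian: it is not a surface of revolution, and the caps cut out by your foliation are genuinely two-dimensional objects with no distinguished profile curve. To make the reduction you would need a symmetrization that does not increase surface area, does not decrease volume (or vice versa), and \emph{preserves the lower curvature bound} $k_i \ge \lambda$ --- and no such symmetrization is known (Schwarz/Steiner symmetrization destroys the curvature constraint). This is precisely why the earlier result in $\R^3$ was confined to surfaces of revolution; your sketch acknowledges that ``one cannot simply integrate a pointwise curvature inequality'' but offers no replacement, so the proposed key lemma (spherical caps minimize volume among $\lambda$-convex caps of fixed lateral area in each leaf) is neither proved nor reduced to anything tractable. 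Two smaller slips: for $\lambda$-convex bodies the rolling is from \emph{outside} ($K$ is contained in a ball of radius $1/\lambda$ touching each boundary point; a ball rolling freely \emph{inside} is the $\lambda$-concave situation), and the reverse inradius inequality gives $r(K) \ge r(L)$, not $r(K) \le r(L)$.

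For contrast, the actual proof runs through inner parallel bodies rather than a foliation of space: one writes $|K| = \int_0^{r(K)} |\partial K_t|\,dt$, works first with $\lambda$-convex polytopes (finite intersections of balls of radius $1/\lambda$), and computes the right derivative of $t \mapsto |\partial K_t|$ at $t=0$, obtaining $-2|\partial K| - 2\sum_{i,j} l_{ij}\tan(\gamma_{ij}/2)$, where the sum runs over edges with lengths $l_{ij}$ and dihedral angles $\gamma_{ij}$. The Gauss--Bonnet identity in $\R^3$ distributes the total curvature $4\pi$ among facets, edges, and vertices; since the vertex contribution is nonnegative and vanishes exactly when there are no vertices, the lens (the unique ball-polytope with two facets and no vertices) maximizes the edge term, so its boundary area decays fastest under the inner parallel flow, giving $|\partial K_t| \ge |\partial L_t|$ for all $t$ and hence $|K| \ge |L|$, with a continuation argument handling later times and a polytope-approximation plus an inradius comparison handling general bodies and the equality case. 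The Gauss--Bonnet ``curvature budget'' argument, which is what eliminates the need for any symmetry of $K$, is the ingredient missing from your proposal.
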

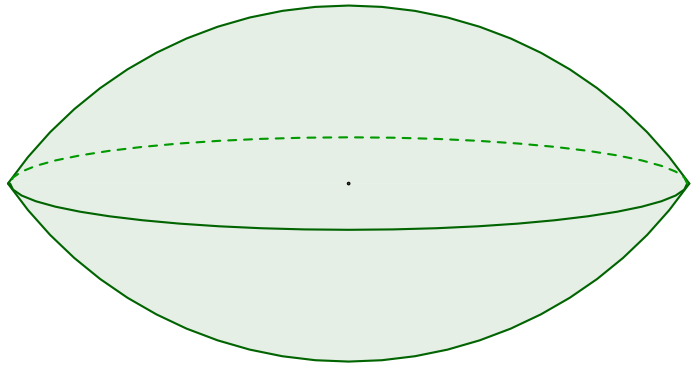
\begin{figure}[h]
\definecolor{qqzzqq}{rgb}{0.,0.6,0.}
\definecolor{uuuuuu}{rgb}{0.26666666666666666,0.26666666666666666,0.26666666666666666}
\definecolor{qqwuqq}{rgb}{0.,0.39215686274509803,0.}
\begin{tikzpicture}[line cap=round,line join=round,>=triangle 45,x=.8cm,y=.8cm]
\clip(25.,-15.) rectangle (38.,-8.);
\draw [shift={(31.51010987987817,-15.336146993295985)},line width=0.8pt,color=qqwuqq,fill=qqwuqq,fill opacity=0.10000000149011612]  plot[domain=0.6072411595631645:2.53435149402663,variable=\t]({1.*6.895043234762658*cos(\t r)+0.*6.895043234762658*sin(\t r)},{0.*6.895043234762658*cos(\t r)+1.*6.895043234762658*sin(\t r)});
\draw [shift={(31.51010987987817,-7.4674684276575025)},line width=0.8pt,color=qqwuqq,fill=qqwuqq,fill opacity=0.10000000149011612]  plot[domain=3.7488338131529564:5.675944147616422,variable=\t]({1.*6.895043234762658*cos(\t r)+0.*6.895043234762658*sin(\t r)},{0.*6.895043234762658*cos(\t r)+1.*6.895043234762658*sin(\t r)});
\draw [shift={(31.510109879878204,-11.401807710476744)},line width=0.8pt,color=qqwuqq]  plot[domain=-3.141592653589793:0.,variable=\t]({1.*5.625966601999856*cos(\t r)+0.*0.769250763314497*sin(\t r)},{0.*5.625966601999856*cos(\t r)+1.*0.769250763314497*sin(\t r)});
\draw [shift={(31.510109879878204,-11.401807710476744)},line width=0.8pt,dash pattern=on 3pt off 3pt,color=qqzzqq]  plot[domain=0.:3.141592653589793,variable=\t]({1.*5.625966601999856*cos(\t r)+0.*0.769250763314497*sin(\t r)},{0.*5.625966601999856*cos(\t r)+1.*0.769250763314497*sin(\t r)});
\begin{scriptsize}
\draw [fill=uuuuuu] (31.51010987987816,-11.401807710476744) circle (0.5pt);
\end{scriptsize}
\end{tikzpicture}
\caption{The $\lambda$-convex lens (or the UFO-body).}
\label{Fig:Lens}
\end{figure}

For the proof of Theorem~\ref{mainthm}, we use some ideas of Nazarov \cite{Nazarov}. Our approach is different from \cite{Dr3}, where the optimal control theory and Pontryagin's Maximum Principle was used to solve the corresponding constrained optimization problem. Our proof can be adapted to work also in $\R^2$, as we illustrate in Appendix~\ref{App:RIPDim2}. This gives an alternative proof of the result in~\cite{BorDr14}.

Recall that the \emph{inradius} of a convex body is the radius of the largest ball contained in the body. As a by-product of the proof of Theorem \ref{mainthm} in Section~\ref{Sec:ProofA}, we establish the inequality $r(K) \ge r(L)$ between the inradii $r(K)$ and $r(L)$ of a $\lambda$-convex body $K \subset \R^3$ and the lens $L \subset \R^3$ satisfying $|\partial K| = |\partial L|$. This is an instance of a {\it reverse inradius inequality}. As the second main result of the paper (see  Section~\ref{Sec:RII}),  we show that the same inequality holds in all constant curvature spaces of arbitrary dimension. In particular, this gives an indication that Borisenko's conjecture might be true for $n\ge 4$.

 We also refer the reader to \cite{HTr95, Ga12}  where some partial progress on the reverse isoperimetric problem under curvature constraints in the non-convex setting was made. (In the non-convex setting, the problem does not always have a well-defined solution. For example, in $\R^2$, there exists a constant $a > 0$ so that for any arbitrary large $\ell > 0$ there is a domain $D$ with area $a$ such that $\partial D$ is a simple closed curve of length $\ell$ and curvature $|k| \le 1$.)

\subsection{Reverse inradius inequality for $\lambda$-convex bodies}\label{Sec:RII}

In the spirit of our discussion in Section \ref{classical}, one can ask the following question:

\begin{center}
\emph{What convex body has the largest inradius for a given surface area?} 
\end{center}

\noindent In $\R^n$, the answer is a ball, and it is a unique body with such property. This is a classical result that immediately follows from the fact that the surface area of the inscribed ball is no greater than the surface area of the convex body in which it is inscribed. The reverse question
\begin{center}
\emph{What convex body has the smallest inradius for a given surface area?}
\end{center}
is well-defined and has a non-trivial solution for $\lambda$-convex bodies in spaces of constant curvature, while this is not the case for general convex bodies.

For $2$-dimensional $\lambda$-convex bodies in the Euclidean plane, i.e., the space of constant zero curvature, Milka \cite{MilInradius} proved that the $\lambda$-convex lens has the smallest inradius for a given length of the boundary. Much later, this result was generalized in \cite{Dr4} to arbitrary $\lambda$-convex domains in $2$-dimensional Alexandrov spaces of curvature bounded below, thus closing this question in dimension $2$. Recently, Bezdek \cite{BezdekConj} showed that among all $\lambda$-convex bodies in $\R^n$, $n\ge 2$, the $\lambda$-convex lens has the smallest inradius for a given \emph{volume}. Bezdek conjectured that the same result must be true if one considers $\lambda$-convex bodies of a given surface area \cite[Conjecture 5]{BezdekConj}. We confirm this conjecture by establishing the following sharp and much more general result:

\begin{MainTheorem}[The Reverse Inradius Inequality for $\lambda$-convex bodies]
\label{Thm:MainB}
Let $n \ge 2$. Let $K \subset M^n(c)$ be a $\lambda$-convex body in a model space $M^n(c)$ of constant curvature $c$, and let $L \subset M^n(c)$ be a $\lambda$-convex lens. If $|\partial K| = |\partial L|$, then the inradii of the bodies satisfy $r(K) \ge r(L)$. Moreover, equality holds if and only if $K$ is a $\lambda$-convex lens.
\end{MainTheorem}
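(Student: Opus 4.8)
The plan is to separate the two invariants by fixing the inradius: I will show that every $\lambda$-convex body has surface area at most that of the $\lambda$-convex lens with the same inradius, and then read off the theorem from the fact that the surface area $f(\rho):=|\partial L_\rho|$ of the lens $L_\rho$ of inradius $\rho$ is a strictly increasing function of $\rho$ (each of the two spherical caps forming $\partial L_\rho$ sits on a sphere of radius $\rho_\lambda$, with angular opening increasing in $\rho$). Indeed, if $|\partial K|=|\partial L|$ and $L_{r(K)}$ denotes the lens of inradius $r(K)$, then $f(r(L))=|\partial L|=|\partial K|\le |\partial L_{r(K)}|=f(r(K))$, hence $r(K)\ge r(L)$; equality is traced back at the end.

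So fix a $\lambda$-convex $K$ with inradius $r$ and an inscribed ball $B(o,r)$ with contact set $P\subset\partial K$. Here $\rho_\lambda$ is the radius of the geodesic sphere of $M^n(c)$ all of whose principal curvatures equal $\lambda$ (this is how $\rho_\lambda$ is defined; in $\R^n$, $\rho_\lambda=1/\lambda$), and $s:=\rho_\lambda-r$. By $\lambda$-convexity every $p\in P$ lies on a supporting ball $B(x_p,\rho_\lambda)\supseteq K$; internal tangency at $p$ of $B(o,r)$ with $B(x_p,\rho_\lambda)$ forces $x_p\in S(o,s)$, and maximality of $B(o,r)$ forces $o$ into the geodesic convex hull of $\{x_p:p\in P\}$. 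Set $\hat K:=\bigcap_{p\in P}B(x_p,\rho_\lambda)\supseteq K$. Then $|\partial K|\le|\partial\hat K|$ since surface area is monotone under inclusion of convex bodies, and by this same monotonicity together with Carath\'eodory's theorem I may discard all but finitely many of the balls while keeping $o$ in the convex hull of the retained centers. Thus it suffices to bound $|\partial\hat K|$ for $\hat K=\bigcap_{i}B(x_i,\rho_\lambda)$ with $x_i=\exp_o(-\rho_\lambda\, n_i)$, $n_i\in\mathbb S^{n-1}\subset T_oM^n(c)$, and $0\in\operatorname{conv}\{n_i\}$ (this last being equivalent to $o$ lying in the geodesic convex hull of the $x_i$).

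Since $\partial\hat K$ is a union of pieces of geodesic spheres of radius $\rho_\lambda$ — on which every principal curvature equals $\lambda$ — transporting outward normals to $\mathbb S^{n-1}\subset T_oM^n(c)$ shows that the area of the $i$-th face equals a universal constant $\gamma=\gamma(n,c,\lambda)$ times the spherical measure of its Gauss image $A_i\subset\mathbb S^{n-1}$; moreover the $A_i$ are pairwise disjoint (strict convexity, after discarding repeated centers), so $|\partial\hat K|=\gamma\,\bigl|\bigcup_iA_i\bigr|$, while $|\partial L_r|=\gamma\cdot 2\,|C_{\alpha_r}|$, where $C_{\alpha_r}$ is a spherical cap of angular radius $\alpha_r$ equal to the half-opening of the two caps bounding $L_r$ (in $\R^n$, $\cos\alpha_r=1-\lambda r$). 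A computation in $T_oM^n(c)$ starting from $B(o,r)\subseteq\hat K\subseteq B(x_i,\rho_\lambda)$ identifies
\[
A_i=\{u\in\mathbb S^{n-1}:\ \langle u,\,n_i-n_j\rangle\ge\cos\alpha_r\,(1-\langle n_i,n_j\rangle)\ \text{for all }j\};
\]
multiplying these inequalities by $\mu_j\ge 0$ (where $0=\sum_j\mu_jn_j$, $\sum_j\mu_j=1$), summing over $j$, and using $\sum_j\mu_j(1-\langle n_i,n_j\rangle)=1$, gives $\langle u,n_i\rangle\ge\cos\alpha_r$ for every $u\in A_i$, i.e.\ $A_i\subseteq C_{\alpha_r}(n_i)$. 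The theorem is thereby reduced to the purely spherical inequality
\[
\sum_i|A_i|=\Bigl|\bigcup_iA_i\Bigr|\ \le\ 2\,|C_{\alpha_r}|,
\]
which I expect to be an equality precisely when there are two indices with $n_i$ antipodal, i.e.\ $\hat K$ is a lens. This last inequality is the crux and the step I expect to be hardest: the containment $A_i\subseteq C_{\alpha_r}(n_i)$ alone only yields $\sum_i|A_i|\le(\#\text{faces})\cdot|C_{\alpha_r}|$, and one must exploit $0\in\operatorname{conv}\{n_i\}$ to see that each $A_i$ is far smaller than its cap once there are more than two faces. I would prove it by foliating $\mathbb S^{n-1}$ by the levels $\{\langle u,n_i\rangle=\cos\phi\}$ and estimating, for each colatitude $\phi\in(0,\alpha_r)$, the admissible set of equatorial directions — where the relation $\sum_j\mu_j\widetilde n_j=0$ among the projections $\widetilde n_j:=n_j-\langle n_i,n_j\rangle n_i$ comes into play — then integrating against the weight $(1-\cos^2\phi)^{(n-3)/2}$ (note that a pointwise-in-$\phi$ bound is false, so the weighting must be used); a workable alternative is a compactness argument bounding the number of faces, followed by a first-variation computation showing that a non-antipodal configuration can never maximize $\bigl|\bigcup_iA_i\bigr|$.

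Finally, the equality case: if $r(K)=r(L)$ then every inequality above is an equality, so $\bigl|\bigcup_iA_i\bigr|=2|C_{\alpha_r}|$ forces $\hat K$ to be a $\lambda$-convex lens, and $|\partial K|=|\partial\hat K|$ with $K\subseteq\hat K$ convex forces $K=\hat K$; hence $K$ is a $\lambda$-convex lens, necessarily $L$. Passing from $\R^n$ to a general model space $M^n(c)$ requires only the corresponding standard facts: monotonicity of surface area under inclusion of convex bodies (unrestricted for $c\le 0$, and within a hemisphere for $c>0$), the contact set of a maximal inscribed ball not lying in an open geodesic half-space through its center, geodesic spheres of radius $\rho_\lambda$ having all principal curvatures equal to $\lambda$ together with the attendant area-versus-Gauss-image identity, and the correct definition of the $\lambda$-convex lens; the spherical inequality itself lives on $\mathbb S^{n-1}$ and is independent of $c$, so it is proved once and for all. (For $(n,c)=(3,0)$ the argument can alternatively be extracted from the proof of Theorem~\ref{mainthm}.)
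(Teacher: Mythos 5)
Your reduction (fix the inradius, compare surface areas, then use strict monotonicity of the lens's surface area in its inradius) is exactly the paper's reduction of Theorem~\ref{Thm:MainB} to Theorem~\ref{Thm:MainC}, and your discarding of facets not touching the insphere, the Carath\'eodory step, and the containment $A_i\subseteq C_{\alpha_r}(n_i)$ are all correct in $\R^n$. But the proposal stops exactly where the theorem actually lives: the inequality $\sum_i|A_i|\le 2|C_{\alpha_r}|$ under the constraint $0\in\mathrm{conv}\{n_i\}$ is nothing other than Theorem~\ref{Thm:MainC} for ball-polytopes all of whose facets touch the insphere, restated on $\mathbb{S}^{n-1}$; you label it the crux and offer two sketched strategies (a foliation-by-colatitude estimate, or compactness plus first variation) without carrying either out, and neither is routine (a maximizer over configurations with unboundedly many facets is not obviously attained, and the pointwise slice bound is, as you note yourself, false). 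The cap containment alone cannot close the argument because the Gauss images $A_i$ do not tile $\mathbb{S}^{n-1}$ --- the normal directions carried by edges and vertices are missing --- so there is no exact ``total mass'' identity to play against. This is precisely why the paper normalizes differently: it projects each facet radially from the incenter onto the inscribed sphere, where the projections do tile $\partial B$ exactly (Claim~\ref{Claim3}), and then proves the per-facet ratio bound $|F_i|/|\tF_i|\le |F|/|\tF|$ (Claim~\ref{KeyClaim}) via conical sectors and the zero-mean monotone-density lemma (Lemma~\ref{Claim1}). Some substitute for that analytic step is what your write-up is missing; as it stands the equality case is also contingent on it.

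A second genuine gap is the passage to general $M^n(c)$. For $c<0$ and $\lambda\le\sqrt{-c}$ there is no geodesic sphere with all principal curvatures equal to $\lambda$: the supporting convex sets are horoballs or equidistant regions, so your $\rho_\lambda$, the centers $x_p$, and the ``area equals $\gamma$ times spherical measure of the Gauss image'' identity simply do not exist in those cases, and the crux inequality would have to be reformulated (the paper's radial-projection density argument is set up so that it treats spheres, horospheres and equidistants uniformly). Moreover, in the equidistant case the intersection of the supporting regions at the contact points need not be compact without an argument (this is the content of Lemma~\ref{Lem:Cpt2}, which requires $r<\rr_\lambda$), and the case $r(K)\ge\rr_\lambda$ must be split off separately, as the paper does at the start of Section~\ref{Sec:ProofB}. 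Finally, a small slip: with your own normalization the centers sit at distance $s=\rho_\lambda-r$ from $o$, so $x_i=\exp_o(-s\,n_i)$, not $\exp_o(-\rho_\lambda n_i)$; the subsequent computation of $A_i$ is consistent with the former, so this is cosmetic, but it should be fixed.
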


For the definition of a $\lambda$-convex lens in spaces of constant curvature, see Definition~\ref{Def:LambdaConvex}.

The following result is an immediate corollary of Theorem~\ref{Thm:MainB} in the $2$-dimensional space of constant positive curvature that can be obtained using spherical duality (see~\cite[Section~4]{BorDr15_1}):

\begin{corollary}[The reverse outer radius inequality in $\mathbb S^2$]
Let $K \subset \S^2$ be a $\lambda$-concave body and $S \subset \S^2$ be a $\lambda$-sausage (i.e.\ the convex hull of two circles of geodesic curvature equal to $\lambda$). If $|K| = |S|$, then the circumradii $R(K), R(S)$ of the bodies (i.e.\ the radii of the smallest geodesic disks containing the bodies) satisfy $R(K) \le R(S)$. Moreover, equality holds if and only if $K$ is a $\lambda$-sausage.\qed
\end{corollary}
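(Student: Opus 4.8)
The plan is to derive the corollary directly from Theorem~\ref{Thm:MainB} applied in the model space $M^2(1)=\S^2$, by means of the \emph{spherical polar duality}
\[
K \longmapsto K^{\ast}:=\{\,x\in\S^2:\ \dist(x,y)\ge \pi/2\ \text{ for all }\ y\in K\,\},
\]
whose basic properties on $\S^2$ I recall from \cite[Section~4]{BorDr15_1}. On convex bodies in $\S^2$ the map $\ast$ is an involution, $(K^{\ast})^{\ast}=K$. The whole argument amounts to matching, under $\ast$, the three pairs of quantities that occur on the two sides: $\lambda$-concavity $\leftrightarrow$ $(1/\lambda)$-convexity (and, correspondingly, $\lambda$-sausage $\leftrightarrow$ $(1/\lambda)$-convex lens), area $\leftrightarrow$ boundary length, and circumradius $\leftrightarrow$ inradius.

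\emph{Curvature.} A geodesic disk whose boundary circle has geodesic curvature $\lambda$ (i.e.\ of radius $\operatorname{arccot}\lambda$) has polar dual the geodesic disk of radius $\pi/2-\operatorname{arccot}\lambda=\operatorname{arccot}(1/\lambda)$, whose boundary circle has geodesic curvature $1/\lambda$. Since $K$ is $\lambda$-concave exactly when at each of its boundary points it is touched from the inside by a supporting disk of geodesic curvature $\lambda$ (barrier sense), dualizing gives the following: each such inscribed disk $D\subseteq K$ yields a circumscribed disk $D^{\ast}\supseteq K^{\ast}$ of geodesic curvature $1/\lambda$ touching $\partial K^{\ast}$ at the point dual to the common supporting geodesic of $K$ and $D$ at the contact point; letting the contact point vary over $\partial K$, one obtains a circumscribed supporting disk of geodesic curvature $1/\lambda$ at every point of $\partial K^{\ast}$, i.e.\ $K^{\ast}$ is $(1/\lambda)$-convex. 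Because $\ast$ carries convex hulls to intersections, the $\lambda$-sausage $S$ (the convex hull of two disks of geodesic curvature $\lambda$) dualizes to the intersection of two disks of geodesic curvature $1/\lambda$, which is precisely the $(1/\lambda)$-convex lens $S^{\ast}$ in $\S^2$ in the sense of Definition~\ref{Def:LambdaConvex}.

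\emph{Metric quantities and conclusion.} I would then invoke the identities, valid for a convex body $K\subset\S^2$,
\[
|K|+|\partial K^{\ast}|=2\pi,\qquad R(K)+r(K^{\ast})=\pi/2,
\]
where $R$ denotes the circumradius (radius of the smallest enclosing geodesic disk) and $r$ the inradius. Both are verified on a disk $D(o,\rho)$, whose dual is $D(o',\pi/2-\rho)$ with $o'$ the antipode of $o$: one has $|D(o,\rho)|=2\pi(1-\cos\rho)$, $|\partial D(o',\pi/2-\rho)|=2\pi\cos\rho$, $R(D(o,\rho))=\rho$, $r(D(o',\pi/2-\rho))=\pi/2-\rho$. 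In general the first identity follows from the incidence correspondence between supporting geodesics of $K$ and boundary points of $K^{\ast}$ (alternatively, from Gauss--Bonnet), and the second is immediate from the definition of $\ast$, since $D(q,r)\subseteq K^{\ast}$ iff $K\subseteq\{x:\dist(x,q)\ge\pi/2+r\}=D(q',\pi/2-r)$ (with $q'$ the antipode of $q$), whence $r(K^{\ast})=\pi/2-R(K)$. Granting this dictionary, the corollary is now immediate: if $|K|=|S|$ then $|\partial K^{\ast}|=|\partial S^{\ast}|$; since $K^{\ast}$ is $(1/\lambda)$-convex and $S^{\ast}$ is the $(1/\lambda)$-convex lens in $\S^2$, Theorem~\ref{Thm:MainB} (with $n=2$, $c=1$) gives $r(K^{\ast})\ge r(S^{\ast})$, with equality iff $K^{\ast}$ is a $(1/\lambda)$-convex lens; translating via the second identity, $\pi/2-R(K)\ge\pi/2-R(S)$, i.e.\ $R(K)\le R(S)$, and equality holds iff $K^{\ast}$ is a lens, iff $K=(K^{\ast})^{\ast}$ is a $\lambda$-sausage.

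\emph{Main obstacle.} There is essentially no obstacle beyond assembling the duality dictionary above --- which is exactly why the statement is phrased as a corollary --- and the only point I expect to require genuine care is the passage through non-smooth boundaries: since $\lambda$-convexity and $\lambda$-concavity are defined via barriers, one must check that the polar dual of an \emph{inscribed} supporting disk at a boundary point is a true \emph{circumscribed} supporting disk at the dual boundary point, corners included; this is handled by the incidence correspondence of $\ast$ together with the elementary fact that a touching inscribed disk of positive radius forces the boundary to be smooth at the contact point (and dually). All of the required statements about spherical polar duality are available in \cite[Section~4]{BorDr15_1} (see also \cite{Dr4}).
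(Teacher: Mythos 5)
Your proposal is correct and is essentially the paper's own argument: the corollary is stated there as an immediate consequence of Theorem~\ref{Thm:MainB} for $n=2$, $c>0$ via the spherical polar duality of \cite[Section~4]{BorDr15_1}, which is exactly the dictionary you assemble (curvature $\lambda\leftrightarrow 1/\lambda$, sausage $\leftrightarrow$ lens, $|K|+|\partial K^{\ast}|=2\pi$, $R(K)+r(K^{\ast})=\pi/2$). Your write-up just makes explicit the verifications the paper leaves to the cited reference.
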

\noindent It is plausible that the same result holds for $\lambda$-concave bodies in spheres of arbitrary dimension, but some additional tools might be required for the proof.

Finally, we would like to point out that the reverse inradius problem in the class of $\lambda$-concave bodies  in $\R^n$ has a simple solution. Indeed, $r(K) \ge 1/\lambda$ because every $\lambda$-concave body $K \subset \R^n$ contains a ball of radius $1/\lambda$. This inequality is the best possible and can be attained only by $\lambda$-sausages.

\subsection{Structure of the paper}

In Section~\ref{Sec:Background}, we give the definition of $\lambda$-convexity and provide some background on the notion. In Sections~\ref{Sec:ProofA} and \ref{Sec:ProofB} we give the proof of Theorems \ref{mainthm} and \ref{Thm:MainB}, respectively. Using our method, we provide a self-contained proof of the reverse isoperimetric inequality in $\mathbb R^2$ in Appendix~\ref{App:RIPDim2}. Finally, Appendix~\ref{App:Ass} contains an asymptotic computation in $\R^n$ showing that, as $n \to \infty$, the $\lambda$-convex lens has smaller volume for given surface area compared to its most close `competitor', a \emph{$\lambda$-convex spindle} (will be defined in Appendix~\ref{App:Ass}).

\bigskip
\noindent
\textbf{Acknowledgement.} The authors are grateful to Fedor Nazarov and Dmitry Ryabogin for their invaluable help. The second author also wants to thank the Institute for Computational and Experimental Research in Mathematics (ICERM) for the hospitality. Part of the work on the paper was completed during her participation at the program ``Harmonic Analysis and Convexity". 

\section{Some background}
\label{Sec:Background}

We denote by $M^n(c)$ a complete simply connected Riemannian manifold of dimension $n \ge 2$ and of constant sectional curvature equal to $c \in \R$. We call these manifolds \emph{model spaces}. It is well-known that $M^n(c)$ is either the spherical, Euclidean, or hyperbolic space depending on whether, respectively, $c > 0$, $c=0$, or $c < 0$.  

We will use the notation $|A|$ for the $n$-dimensional volume of a measurable set $A$ in $M^n(c)$. We will use the same notation $|\partial A|$ for the $(n-1)$-dimensional volume of the boundary of $A$ (provided this volume is well-defined).

As usual, a \emph{convex body} in a model space is a compact, geodesically convex set with non-empty interior.

\subsection{Totally umbilical hypersurfaces in model spaces}	
\label{SSec:Facts}

For a given $\lambda > 0$, let $\Ss_\lambda \subset M^n(c)$ be a complete totally umbilical hypersurface of constant normal curvature equal to $\lambda$. We will use the notation $\Bb_\lambda$ for the convex region bounded by $\Ss_\lambda$. Everywhere below we assume that $\Bb_\lambda$ is closed.

The following classification of complete totally umbilical hypersurfaces is well-known (see, e.g., \cite[Section 35.2.4]{BZ}):

\begin{enumerate}
\item
If $c = 0$, i.e., the ambient model space is the Euclidean space $\mathbb R^n$, then the sets $\Ss_\lambda$ and $\Bb_\lambda$ are, respectively, spheres and balls of radius $1/\lambda$. 
\item
If $c > 0$, i.e., $M^n(c)$ is the sphere $\S^n(c)$ of radius $1/\sqrt{c}$, then $\Ss_\lambda$ is a geodesic sphere of radius $1/\sqrt{c} \cdot \cot^{-1} (\lambda/\sqrt{c})$ and $\Bb_\lambda$ is the corresponding ball.
\item
The situation is more interesting if $c<0$, i.e., when the model space is the hyperbolic space $\Hh^n(c)$, and it depends on the relation between $c$ and $\lambda$:
\begin{enumerate}
\item
If $\lambda > \sqrt{-c}$, then $\Ss_\lambda$ is a geodesic sphere of radius $1/\sqrt{-c} \cdot \coth^{-1} (\lambda/\sqrt{-c})$, and $\Bb_\lambda$ is the geodesic ball. In this case, $\Bb_\lambda$ is compact. 
\item
If $\lambda = \sqrt{-c}$, then $\Ss_\lambda$ is a \emph{horosphere} and $\Bb_\lambda$ is a \emph{horoball}. In this case, $\Bb_\lambda$ is not compact (however, it can be compactified by adding a single point at infinity).
\item
\label{It:Horo}
Finally, if $\lambda < \sqrt{-c}$, then $\Ss_\lambda$ is an \emph{equidistant hypersurface} (or \emph{equidistant} for short), i.e., the set of points at a given distance from a totally geodesic hyperplane. By explicit computation, one can see that for each $\Ss_\lambda$ there exists a unique hyperplane $\mathcal H \subset \Hh^n(c)$ such that 
\[
\Ss_\lambda = \left\{x \in \Hh^n(c) \colon \dist_{\Hh^n(c)}(x, \mathcal H) = \frac{1}{2\sqrt{-c}} \log \frac{\sqrt{-c} + \lambda}{\sqrt{-c} - \lambda}\right\}.
\] 
We will call $\rr_\lambda := \frac{1}{2\sqrt{-c}} \log\frac{\sqrt{-c}+\lambda}{\sqrt{-c} -\lambda}$ the \emph{characteristic distance} for the equidistant $\Ss_\lambda \subset \Hh^n(c)$. In the equidistant case, $\Bb_\lambda$ is not compact and there is a cone of directions to infinity within $\Bb_\lambda$.
\end{enumerate}

\noindent
In the Poincar\'e unit ball model of $\Hh^n(c)$, the hypersurfaces $\Ss_\lambda$ are Euclidean spheres that: lie inside the unit ball if $\lambda > \sqrt{-c}$, touch the unit sphere if $\lambda = \sqrt{-c}$, and intersect the unit sphere at an angle different from $\pi/2$ if $\lambda < \sqrt{-c}$. Note that when the angle of intersection tends to $\pi/2$, the corresponding equidistant hypersurface tends to a totally geodesic hyperplane.
\end{enumerate}

\subsection{$\lambda$-convex bodies}	
\label{SSec:LConv}
Fix some $\lambda>0$. Recall that for every totally umbilical hypersurface $\Ss_\lambda$ in a model space, we denote by $\Bb_\lambda$ the corresponding convex region bounded by~$\Ss_\lambda$.

\begin{definition}[$\lambda$-convex body]
\label{Def:LambdaConvex}
A convex body $K \subset M^n(c)$ is \emph{$\lambda$-convex} if for each $p \in \partial K$ there exists a neighborhood $U_p \subset M^n(c)$ and a totally umbilical hypersurface $\Ss_\lambda$ that passes through $p$ in such a way that
\[
U_p \cap \partial K \subset \Bb_\lambda
\]
(see Figure~\ref{Fig:UniformConvexity}).
\end{definition}

\begin{figure}
	\includegraphics[scale=0.45]{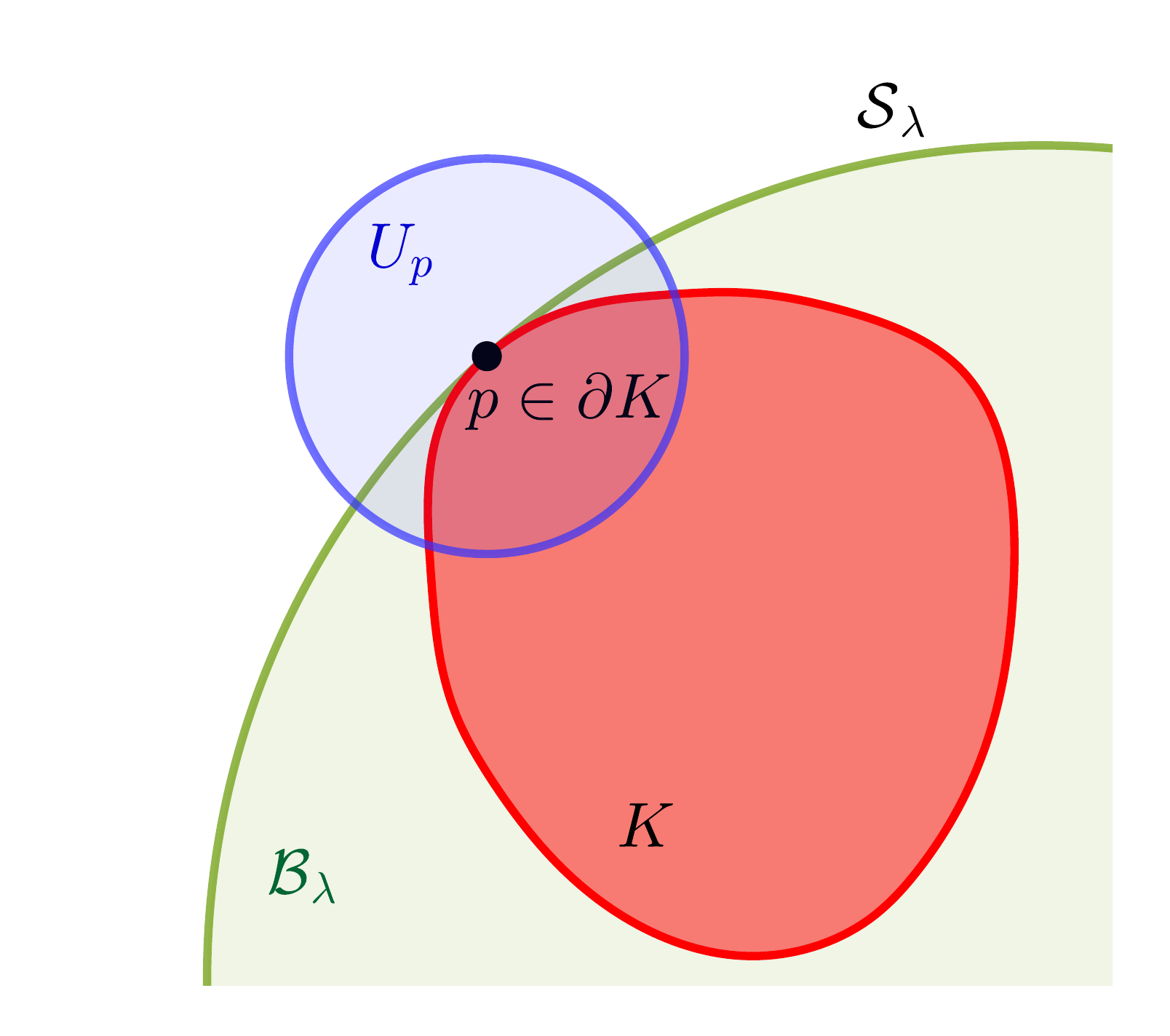}
	\caption{$K$ is a convex body, $\Ss_\lambda$ is a totally umbilical hypersurface of constant normal curvature $\lambda > 0$, e.g., a sphere. $K$ is \emph{$\lambda$-convex} if this local picture holds in some neighborhood $U_p$ around every point $p \in \partial K$. In this case, we say that the principal curvatures of $\partial K$ are bounded below by $\lambda$ in the \emph{barrier sense}.}
	\label{Fig:UniformConvexity}
\end{figure}

For a $\lambda$-convex body $K$, we call $\Bb_\lambda = \Bb_\lambda(p)$ a \emph{supporting convex set of curvature $\lambda$} (passing through $p \in \partial K$). If $\Bb_\lambda$ is a geodesic ball, we will call it a \emph{supporting ball} (and its boundary $\Ss_\lambda$ a \emph{supporting sphere}) at the point $p$. 

The following classical theorem of Blaschke turns the local condition in Definition~\ref{Def:LambdaConvex} into a global one. It was first proven in \cite{Bla56} for $\lambda$-convex bodies with smooth boundary in $\R^2$, and was later extended in \cite{Rau74, BrStr89} for general $\lambda$-convex domains in $\R^n$, and in \cite{Kar68, Mil70, How99, BDr15, DrBla} for $\lambda$-convex bodies in other model spaces (and more generally, in some Riemannian manifolds of bounded curvature; see \cite{DrBla} and references therein). 

\begin{theorem}[Blaschke's Rolling Theorem]
\label{Thm:Bla}
Let $K \subset M^n(c)$ be a $\lambda$-convex body. Then for every $p \in \partial K$ there exists a supporting \textcolor{red}{convex set} $\Bb_{\lambda}(p)$ of curvature $\lambda$ such that $K  \subseteq  \Bb_{\lambda}(p)$. \qed
\end{theorem}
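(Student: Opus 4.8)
The substance of the statement is that the \emph{local} barrier condition of Definition~\ref{Def:LambdaConvex} is automatically \emph{global}; the natural route is a rolling (sweeping) argument, which I sketch first for the Euclidean case.

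Fix $p\in\partial K$ and, using $\lambda$-convexity, pick a supporting convex set $\Bb_0:=\Bb_\lambda(p)$ of curvature $\lambda$ with $\partial K\cap U_p\subseteq\Bb_0$ for a neighbourhood $U_p$ of $p$. A short argument (a convex body lying locally on one side of a hyperplane through a boundary point lies globally on that side) shows that the tangent hyperplane to $\partial\Bb_0$ at $p$ supports $K$, so $K$ and $\Bb_0$ lie on the same side of it. It therefore suffices to prove $K\subseteq\Bb_0$. The whole argument rests on one elementary comparison: two totally umbilical hypersurfaces of the same curvature $\lambda$ that are tangent at a common point have $O(\varepsilon^{2})$-equal second-order profiles there, so $\partial K$ cannot be squeezed between them on opposite sides near that point. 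As a warm-up this already gives that $K$ contains no ball of radius exceeding $1/\lambda$ tangent to $\partial K$ (at the tangency point $\partial K$ would sit inside the barrier sphere of curvature $\lambda$ and outside the larger inscribed sphere), hence $r(K)\le 1/\lambda$.

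For the main step, assume $K\not\subseteq\Bb_0$; then also $\partial K\not\subseteq\Bb_0$ (otherwise $K\subseteq\Bb_0$, as $\Bb_0$ is convex), so there is a point $q\in\partial K\setminus\Bb_0$. I would then roll a congruent copy of $\Bb_\lambda$ along $\partial K$: build a continuous family $\Bb_t$, $t\in[0,t^{*}]$, of congruent copies of $\Bb_\lambda$, each a local supporting set of $K$ at a contact point $\gamma(t)\in\partial K$, with $\Bb_0$ and $\gamma(0)=p$ as above and $\gamma$ moving from $p$ toward $q$, maintaining along the roll the invariant that the already-swept portion of $\partial K$ is contained in $\Bb_t$. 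Taking $t^{*}$ to be the first instant this invariant is about to fail — which happens before $\gamma$ reaches $q$, because $q\notin\Bb_0$ — a compactness argument produces a limiting configuration in which $\partial\Bb_{t^{*}}$ is tangent to $\partial K$ at $\gamma(t^{*})$, has $\partial K$ locally inside it there, and meets $\partial K$ at a second point $y^{*}$. At $y^{*}$ one then has the rolled hypersurface $\partial\Bb_{t^{*}}$ and a barrier hypersurface supplied by $\lambda$-convexity of $K$ at $y^{*}$, both of curvature $\lambda$ and tangent to each other, with $\partial K$ forced onto incompatible sides nearby; the comparison above rules this out. Hence $K\subseteq\Bb_0$, and since $p\in\partial\Bb_0$ the proof is finished.

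In a general model space $M^{n}(c)$ the scheme is unchanged once ``ball of radius $1/\lambda$'' is read as a congruent copy of the region $\Bb_\lambda$ — a geodesic ball, horoball, or equidistant region according to the classification in Section~\ref{SSec:Facts} — and the $O(\varepsilon^{2})$ comparison is replaced by the analogous Jacobi-field estimate in constant curvature; in the hyperbolic cases $\lambda\le\sqrt{-c}$ the set $\Bb_\lambda$ is noncompact, which only helps. The genuine obstacle — and the reason the theorem is the subject of the several papers cited before its statement — is to make the rolling rigorous when $n\ge 3$ and $\partial K$ merely satisfies the barrier condition rather than being smooth: one must show the roll can always be prolonged until the invariant is forced to fail, retain control of the possibly higher-dimensional contact locus, and exclude degenerate limits. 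For $n=2$ this collapses to Blaschke's original argument \cite{Bla56}.
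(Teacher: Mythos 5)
There is a genuine gap here, and you essentially flag it yourself. First, for context: the paper does not prove Theorem~\ref{Thm:Bla} at all — it is quoted as a known result (hence the \qed with no argument), with the proof delegated to the cited literature: \cite{Bla56} for smooth curves in $\R^2$, \cite{Rau74, BrStr89} for general $\lambda$-convex bodies in $\R^n$, and \cite{Kar68, Mil70, How99, BDr15, DrBla} for the other model spaces. So the benchmark is not a short in-paper argument but the full rolling theorems of those references, and your proposal reproduces only their opening strategy (reduce to showing the local barrier at $p$ contains $K$ globally, then roll a congruent copy of $\Bb_\lambda$ along $\partial K$ and analyze the first failure of the containment invariant) while leaving the decisive steps unproved. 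Your closing paragraph concedes exactly this: making the roll well-defined and prolongable for a merely barrier-regular boundary in dimension $n\ge 3$, controlling the contact locus, and excluding degenerate limits is ``the genuine obstacle'' — but that obstacle \emph{is} the theorem; a proof that defers it is a plan, not a proof.

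To be concrete about where the sketch would break if pushed: the contradiction you want at the critical time $t^{*}$ needs the rolled region $\Bb_{t^{*}}$ and the barrier $\Bb_\lambda(y^{*})$ furnished by Definition~\ref{Def:LambdaConvex} to be tangent to a \emph{common} supporting hyperplane at $y^{*}$ with the same inward normal; only then does the rigidity of equal-curvature umbilical hypersurfaces (tangent with common normal and side $\Rightarrow$ locally equal) force the incompatibility you invoke. But $\partial K$ need not be differentiable at $y^{*}$, the normal cone there can be large, the first-failure contact can occur at the edge of the swept portion of $\partial K$ (where the touching may be transversal rather than tangential), and the contact set can be higher-dimensional; in any of these configurations the two umbilical regions may simply cross at $y^{*}$ and no contradiction follows without further work. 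Likewise, the existence of a continuous family $\Bb_t$ with contact point $\gamma(t)$ ``moving from $p$ toward $q$'' is asserted, not constructed — for non-smooth $\partial K$ there is no canonical normal field to roll along, and building such a family (or replacing it by a connectedness/open-closed argument on the set of boundary points whose barriers contain $K$) is precisely the technical content of \cite{BrStr89, How99, DrBla}. Finally, the dismissal of the hyperbolic horosphere and equidistant cases as ``noncompactness only helps'' is not an argument; those cases require their own comparison estimates (this is why the paper cites separate references for $M^n(c)$ with $c<0$). So the approach is the right classical one, but as written it does not close the theorem.
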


Finally, let us point out an equivalent definition of $\lambda$-convexity which is, in some sense, dual to Definition~\ref{Def:LambdaConvex}. The equivalence can be seen using Theorem~\ref{Thm:Bla}. A convex body $K \subset M^n(c)$ is said to be $\lambda$-convex if and only if any pair of points in $K$ can be connected by the shortest arc of a curve of constant geodesic curvature $\lambda$ and this arc lies entirely in $K$. From this point of view, $\lambda$-convexity is a generalization of the standard notion of convexity where we say that a body is convex if and only if the line segment that connects any two points lies entirely inside the body.   
It follows that the whole spindle hypersurface, that is the locus of all arcs connecting the two points, and the domain bounded by this hypersurface must lie inside the body (this justifies the name ``spindle convexity'' used by some authors; see Section~\ref{SSec:LConvPol}). Spindle hypersurfaces will appear in our discussion in Appendix~\ref{App:Ass}. 

\subsection{$\lambda$-convex polytopes and their properties}	
\label{SSec:LConvPol}

In this subsection, we define a special class of $\lambda$-convex bodies.

\begin{definition}[$\lambda$-convex polytope]
	\label{Def:LambdaConvexPol}
A \emph{$\lambda$-convex polytope} in $M^n(c)$ is a $\lambda$-convex body given as the intersection of finitely many convex regions $\Bb_\lambda$. Each \emph{facet} of the $\lambda$-convex polytope is the intersection of the corresponding $\Bb_\lambda$ with the boundary of the polytope. A facet is of dimension $n-1$. Two facets are either disjoint, or intersect. In the latter case, their intersection is called a (lower-dimensional) \emph{face}. The faces of dimension $0$ and $1$ are called {\it vertices} and {\it edges} of the polytope, respectively.
\end{definition}

\begin{definition}[$\lambda$-convex lens]	
A $\lambda$-convex polytope with exactly two facets is called a \emph{$\lambda$-convex lens} (or a \emph{UFO body}) (see Figure~\ref{Fig:Lens}). 
\end{definition}

Note that for $n\ge3$, a $\lambda$-convex lens has exactly one $(n-2)$-dimensional face and no faces of dimension less than $n-2$. For $n=2$, there are exactly two $0$-dimensional faces (i.e., the vertices of the $2$-dimensional lens). Each lens is reflection-symmetric with respect to the totally geodesic hyperplane passing through its $(n-2)$-dimensional face(s). In $\R^n$, a $\lambda$-convex lens is the intersection of two balls of radius $1/\lambda$.

We would like to point out a  direct correspondence between $\lambda$-convex polytopes and \emph{ball-polyhedra}, i.e., bodies that are obtained as intersections of finitely many geodesic balls of a fixed radius. These objects from discrete geometry have been intensively studied, see e.g., \cite{BLNP} and references therein. The notion of $\lambda$-convexity in the context of ball-polyhedra appears in a number of papers  as ``$r$-hyperconvexity", ``spindle convexity", ``ball convexity", and ``$r$-duality", e.g., \cite{BLNP, Bezdek2018, FKV, LNT2013}. However, in our definition of $\lambda$-convexity in the hyperbolic space, we allow for supporting hypersurfaces to be not only balls, but also horospheres or equdistants. The former case is known as an instance of \emph{$h$-convexity}. From this point of view, motivated by some questions in geometric probability, $\lambda$-convexity was studied in Riemannian spaces, e.g., in \cite{BGR, BM}.

The following proposition is a direct corollary of Blaschke's rolling theorem (Theorem~\ref{Thm:Bla}):

\begin{proposition}[Approximation by $\lambda$-convex polytopes]
	\label{Prop:Approx}
	Let $K \subset M^n(c)$ be a $\lambda$-convex body. Then there exists a sequence $(P_i)_{i=1}^\infty$ of $\lambda$-convex polytopes such that $K \subseteq ... \subseteq P_2 \subseteq P_1$ and $\lim\limits_{i \rightarrow\infty} P_i = K$ in Hausdorff metric.
	\qed 
\end{proposition}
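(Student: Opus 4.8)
The plan is to build the approximating sequence one facet at a time, using Blaschke's Rolling Theorem (Theorem~\ref{Thm:Bla}) as the engine. First I would enumerate a countable dense subset $\{p_1, p_2, \ldots\}$ of the boundary $\partial K$. By Theorem~\ref{Thm:Bla}, for each $p_j$ there is a supporting convex set $\Bb_\lambda(p_j)$ of curvature $\lambda$ with $K \subseteq \Bb_\lambda(p_j)$. Then I would define
\[
P_i := \bigcap_{j=1}^{i} \Bb_\lambda(p_j).
\]
Each $P_i$ is by construction an intersection of finitely many convex regions $\Bb_\lambda$, so it is a $\lambda$-convex polytope in the sense of Definition~\ref{Def:LambdaConvexPol} (after checking it has nonempty interior, which holds since $K \subseteq P_i$ and $K$ has nonempty interior). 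The chain of inclusions $K \subseteq \ldots \subseteq P_2 \subseteq P_1$ is immediate, since adding more sets to an intersection only shrinks it, and every $\Bb_\lambda(p_j)$ contains $K$.

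The remaining point is Hausdorff convergence $P_i \to K$. Since $K \subseteq P_i$ for all $i$ and the $P_i$ are nested, it suffices to show $\bigcap_{i} P_i = K$: a nested decreasing sequence of compact convex sets converges in Hausdorff metric to its intersection, and if that intersection equals $K$ the proof is complete. (One should first note that the $P_i$ are uniformly bounded — e.g.\ all contained in $P_1$ — so compactness and the Hausdorff-limit statement are legitimate.) The inclusion $K \subseteq \bigcap_i P_i$ is already known, so I must prove $\bigcap_i P_i \subseteq K$. Suppose $x \notin K$. I want to produce some $p_j$ with $x \notin \Bb_\lambda(p_j)$. Let $q \in \partial K$ be a nearest point of $K$ to $x$. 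I would pick $p_j$ from the dense set very close to $q$; by continuity of the supporting-set construction (or, more robustly, by a direct geometric estimate) the supporting region $\Bb_\lambda(p_j)$, whose bounding hypersurface $\Ss_\lambda$ passes through $p_j$ near $q$ and curves away from $x$ with bounded curvature $\lambda$, will exclude the point $x$ once $p_j$ is close enough to $q$ — here one uses that $x$ lies strictly outside $K$ at positive distance, while a totally umbilical hypersurface of curvature $\lambda$ through a point near $q$ stays within a controlled neighborhood. Hence $x \notin P_j$, so $x \notin \bigcap_i P_i$, giving the reverse inclusion.

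The main obstacle is the last geometric step: showing that a supporting region $\Bb_\lambda(p_j)$ through a boundary point $p_j$ near $q$ genuinely cuts off the exterior point $x$. In the Euclidean case this is transparent — the supporting ball of radius $1/\lambda$ at $p_j$ converges (as $p_j \to q$) to a supporting ball at $q$ whose interior is disjoint from $x$ because $x$ is separated from $K$ by the supporting hyperplane at $q$, and a small perturbation of $p_j$ keeps $x$ outside the ball. In spaces of constant curvature, and especially in $\Hh^n(c)$ where $\Bb_\lambda$ may be a horoball or the (noncompact) convex side of an equidistant, one has to be a little careful, but the same separation idea works: one uses the supporting totally geodesic hyperplane of the convex body $K$ at $q$ to separate $x$ from $K$, and then notes that the totally umbilical hypersurface $\Ss_\lambda$ through a nearby point $p_j$ lies, near $q$, on the $K$-side of a slightly shifted hyperplane, hence still separates $x$. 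Alternatively — and this is perhaps the cleanest route — since the statement is phrased as ``a direct corollary of Blaschke's rolling theorem,'' one may simply invoke that for each $i$, $\partial P_i$ and $\partial K$ agree on the (now dense, as $i\to\infty$) set of touching points $p_1,\ldots,p_i$, and combine this with the uniform curvature bound to get the Hausdorff estimate $d_H(P_i, K) \to 0$ quantitatively in terms of the mesh of $\{p_1,\ldots,p_i\}$ on $\partial K$; I would likely present the argument in this quantitative form to keep the constant-curvature cases uniform.
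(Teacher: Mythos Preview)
Your approach is precisely the natural elaboration of what the paper intends: the paper gives no proof beyond declaring the proposition a direct corollary of Blaschke's Rolling Theorem, and your construction (dense boundary points, intersect the supporting regions $\Bb_\lambda(p_j)$, show the nested intersections converge to $K$) is the standard way to make that precise. The separation argument for $\bigcap_i P_i \subseteq K$ is fine.

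There is, however, one genuine gap you should patch. In the hyperbolic case with $\lambda \le \sqrt{-c}$ the region $\Bb_\lambda$ is a horoball or the convex side of an equidistant, hence \emph{non-compact}. Consequently your $P_1 = \Bb_\lambda(p_1)$ (and possibly $P_2, P_3,\ldots$) is not compact and therefore not a $\lambda$-convex polytope in the sense of Definition~\ref{Def:LambdaConvexPol}; your parenthetical ``all contained in $P_1$, so compactness \ldots\ is legitimate'' is exactly where this breaks. The fix is to seed the sequence with enough supporting regions to force compactness from the start: choose an initial finite set $p_1,\ldots,p_N \in \partial K$ whose outward unit normals positively span all directions. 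Each $\Bb_\lambda(p_j)$ is contained in the supporting half-space of $K$ at $p_j$ (both are convex, contain $K$, and are tangent at $p_j$), so $\bigcap_{j\le N}\Bb_\lambda(p_j)$ sits inside a compact intersection of half-spaces and is itself compact. Then relabel so that the sequence begins at this compact stage. This is the same compactness issue the paper isolates in Lemmas~\ref{Lem:Cpt1} and~\ref{Lem:Cpt2}.
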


Observe that   an implicit assumption in Definition~\ref{Def:LambdaConvexPol}  is that the intersection of balls in question is compact. In many cases, this is automatic:
\begin{lemma}[Compact intersection of balls]
	\label{Lem:Cpt1}
	Let $\Bb_1, \Bb_2 \subset M^n(c)$ be a pair of convex bodies bounded by totally umbilical hypersurfaces of curvature $\lambda$. Assume $\Bb_1$ is not contained in $\Bb_2$, and $\Bb_2$ is not contained in $\Bb_1$.
	If $c \ge 0$, or $c < 0$ and $\lambda \ge \sqrt{-c}$, then $\Bb_1 \cap \Bb_2$ is a compact set. 
\end{lemma}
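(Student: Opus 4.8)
The plan is to exploit the constant-curvature geometry directly: distinguish the cases $c \ge 0$ and $c < 0$ with $\lambda \ge \sqrt{-c}$, and in each case show that $\Bb_1 \cap \Bb_2$ is a closed subset of a geodesic ball. Since $\Bb_1 \cap \Bb_2$ is automatically closed (as an intersection of closed sets, using the standing assumption that the $\Bb_\lambda$ are closed), compactness will follow as soon as we exhibit a geodesic ball containing it, because in $M^n(c)$ a closed bounded set (i.e., a closed subset of a geodesic ball) is compact. The hypothesis that neither $\Bb_i$ contains the other is what forces the intersection to be ``small'' rather than to extend to infinity.

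First, consider $c > 0$, so $M^n(c) = \S^n(c)$. Here every $\Bb_\lambda$ is a genuine geodesic ball and the whole ambient space is compact, so there is nothing to prove; $\Bb_1 \cap \Bb_2$ is a closed subset of the compact space $\S^n(c)$. Next, consider $c = 0$: then $\Bb_1, \Bb_2$ are Euclidean balls of radius $1/\lambda$, each is itself compact, and so is their intersection. The only substantive case is $c < 0$ with $\lambda \ge \sqrt{-c}$. If $\lambda > \sqrt{-c}$, then again by the classification in Section~\ref{SSec:Facts} each $\Bb_i$ is a compact geodesic ball, and the intersection of two compact sets is compact — done. So the genuinely delicate subcase is $\lambda = \sqrt{-c}$, where each $\Bb_i$ is a (noncompact) horoball, and the intersection of two horoballs can be noncompact — precisely when one horoball is contained in the other, or when they are ``parallel'' (share the same point at infinity).

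The key step, therefore, is to rule out noncompactness of $\Bb_1 \cap \Bb_2$ when both are horoballs and neither contains the other. I would argue in the Poincar\'e ball model of $\Hh^n(c)$: by the description at the end of Section~\ref{SSec:Facts}, each horoball $\Bb_i$ is (the intersection with the open unit ball of) a Euclidean ball internally tangent to the unit sphere at a single ideal point $\xi_i \in \partial_\infty \Hh^n(c)$. A point of $\overline{\Bb_1 \cap \Bb_2}$ lying on $\partial_\infty \Hh^n(c)$ would have to be a common boundary point of the two Euclidean balls on the unit sphere, hence $\xi_1 = \xi_2$; but two distinct horoballs based at the same ideal point are nested, one containing the other, contradicting our hypothesis. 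If $\xi_1 \ne \xi_2$, then $\overline{\Bb_1} \cap \overline{\Bb_2}$ meets the closed unit ball only in interior points, hence is a compact subset of the open unit ball, i.e., a bounded (in the hyperbolic metric) closed set; therefore $\Bb_1 \cap \Bb_2$ is compact. (One should also check that $\xi_1 \ne \xi_2$ together with ``neither contains the other'' actually produces a nonempty intersection for the lens definition to be meaningful, but for the compactness claim nonemptiness is irrelevant and the empty set is compact.)

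The main obstacle I anticipate is purely a matter of careful case bookkeeping rather than a hard idea: one must make sure the ``neither contained in the other'' hypothesis is used in exactly the right place (it is vacuous when both $\Bb_i$ are compact geodesic balls, but essential in the horoball subcase), and one must be careful that in the horoball model the closure is taken in the right ambient space (the closed unit ball in $\R^n$, not the open one) so that the compactness conclusion about the hyperbolic metric is legitimate. A clean way to package the horoball subcase uniformly with the others is to note that, once neither horoball contains the other and $\xi_1 \ne \xi_2$, the smaller of the two Euclidean radii shrinks $\overline{\Bb_1}\cap\overline{\Bb_2}$ away from $\partial_\infty\Hh^n(c)$, so $\dist_{\Hh^n(c)}(\,\cdot\,, o)$ is bounded on $\Bb_1 \cap \Bb_2$ for a fixed basepoint $o$, which is exactly boundedness, hence (with closedness) compactness.
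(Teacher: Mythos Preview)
Your proposal is correct and follows the same approach as the paper, which simply states that the lemma ``follows from the classification of totally umbilical hypersurfaces'' in Section~\ref{SSec:Facts}. You have unpacked that one-line justification case by case, in particular making explicit the horoball subcase $\lambda=\sqrt{-c}$ where the non-containment hypothesis is actually needed; this is exactly the content the paper leaves to the reader.
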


\begin{proof}
	It follows from the classification of totally umbilical hypersurfaces, see Section~\ref{SSec:Facts}.
\end{proof}

 However,  Lemma~\ref{Lem:Cpt1} is false in this generality in the hyperbolic equidistant case, i.e., when $\lambda < \sqrt{-c}$. Since each $\Bb_i$ is unbounded and contains a cone of directions to infinity, this causes extra difficulty. Nonetheless, we can still get a compact intersection provided that sufficiently many equidistant balls are taken and that they are sufficiently `close' to each other. More precisely, we have:

\begin{lemma}[Compact intersection of equidistant balls]
	\label{Lem:Cpt2}
	Let $B \subset \Hh^n(c)$, $c <0$, be a geodesic ball, and let $\Bb_1, \ldots, \Bb_{m}$ be a collection of $m$ convex regions in $\Hh^n(c)$ bounded by equidistants of curvature $\lambda \in (0, \sqrt{-c})$. Assume also that $B \subset \Bb_i$ and $\partial B$ touches $\partial \Bb_i$ at some point $p_i$  for each $i \in \{1, \ldots, m\}$. Suppose that the radius of $B$ is less than the characteristic distance $\rr_\lambda$ and that the set of touching points ${\bf p} = \{p_1, \ldots, p_{m}\}$ does not lie in any open half-space with respect to any hyperplane passing through the center of $B$. Then 
	\[
	\Bb_{\bf p} := \bigcap_{i = 1}^{m} \Bb_i 
	\] 
	is a compact set.
\end{lemma}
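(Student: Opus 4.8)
The plan is to work in the Poincar\'e unit ball model of $\Hh^n(c)$, where each convex region $\Bb_i$ bounded by an equidistant of curvature $\lambda < \sqrt{-c}$ is a Euclidean half-ball-like region: the part of the unit ball cut off by a Euclidean sphere meeting the unit sphere at a fixed angle $\alpha = \alpha(\lambda, c) \in (0, \pi/2)$. The obstruction to compactness of $\bigcap_i \Bb_i$ is purely at infinity: each $\Bb_i$ reaches the boundary sphere $\S^{n-1}_\infty$ in a spherical cap $C_i \subset \S^{n-1}_\infty$ (the ``cone of directions to infinity''), and $\Bb_{\bf p}$ is compact if and only if $\bigcap_i C_i = \varnothing$. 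So the whole statement reduces to a statement about spherical caps on $\S^{n-1}_\infty$, and the job is to translate the hypotheses (radius of $B$ less than $\rr_\lambda$; touching points ${\bf p}$ not contained in an open half-space through the center of $B$) into that language and show they force an empty common intersection of the caps.

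First I would set up coordinates so that the center of $B$ is the origin $o$ of the ball model. Each equidistant $\Ss_i = \partial \Bb_i$ touches $\partial B$ at $p_i$, and by symmetry of the model the tangency forces the Euclidean sphere carrying $\Ss_i$ to be centered along the ray from $o$ through $p_i$, on the far side; hence the spherical cap $C_i \subset \S^{n-1}_\infty$ is centered at the antipode $-\hat p_i$ of the radial direction $\hat p_i := p_i/|p_i| \in \S^{n-1}_\infty$. Next I would compute the angular radius $\beta$ of each cap $C_i$ as a function of the hyperbolic radius $\rho := \mathrm{radius}(B)$, the curvature $\lambda$, and $c$; the key qualitative fact I expect is that $\beta$ is a strictly increasing function of $\rho$ with $\beta = \pi/2$ exactly when $\rho = \rr_\lambda$ (this is precisely why the characteristic distance $\rr_\lambda$ appears — a ball of radius $\rr_\lambda$ touching an equidistant from inside exactly reaches the ``equatorial'' boundary, and its cap is a hemisphere). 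Thus the hypothesis $\rho < \rr_\lambda$ gives $\beta < \pi/2$: every cap $C_i$ is strictly smaller than a hemisphere, centered at $-\hat p_i$.

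Now the combinatorial core. A point $\xi \in \S^{n-1}_\infty$ lies in $C_i$ iff its spherical distance to $-\hat p_i$ is at most $\beta$, equivalently iff $\langle \xi, -\hat p_i\rangle \ge \cos\beta$, i.e. $\langle \xi, \hat p_i\rangle \le -\cos\beta < 0$. So $\bigcap_i C_i \neq \varnothing$ would produce a unit vector $\xi$ with $\langle \xi, \hat p_i\rangle \le -\cos\beta < 0$ for every $i$, which means all the $\hat p_i$ — hence (scaling back) all the touching points $p_i$, which lie on the sphere $\partial B$ centered at $o$ — are strictly on one side of the hyperplane through $o$ with normal $\xi$: they lie in the open half-space $\{x : \langle x - o, \xi\rangle < 0\}$. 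This contradicts the hypothesis that ${\bf p}$ is not contained in any open half-space with respect to any hyperplane through the center of $B$. Hence $\bigcap_i C_i = \varnothing$, and $\Bb_{\bf p} = \bigcap_i \Bb_i$ meets $\S^{n-1}_\infty$ in the empty set; being a closed subset of the open unit ball with empty boundary-at-infinity, and being bounded away from $\S^{n-1}_\infty$ by a compactness/finiteness argument (finitely many closed caps with empty intersection have pairwise-positive ``gaps'', uniformly), it is a compact subset of $\Hh^n(c)$.

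\textbf{Main obstacle.} The step I expect to be the most delicate is making rigorous the passage ``$\bigcap_i C_i = \varnothing$ $\Rightarrow$ $\bigcap_i \Bb_i$ is compact.'' One must rule out the common intersection escaping to infinity through the boundary even though the caps $C_i$ have empty common intersection: since there are only finitely many caps and they are compact subsets of $\S^{n-1}_\infty$, their empty intersection yields, by a standard finite-intersection argument, an $\eps > 0$ so that no point of $\S^{n-1}_\infty$ is within spherical distance $\eps$ of all of them; one then has to check — via the explicit description of $\Bb_i$ as a Euclidean spherical cap region in the ball model — that this forces $\bigcap_i \Bb_i$ to stay inside a Euclidean ball of radius $1 - \delta(\eps) < 1$, hence inside a fixed compact subset of $\Hh^n(c)$. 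Everything else (the tangency normal direction, the monotone formula for $\beta(\rho)$ with threshold $\rr_\lambda$, the half-space translation) is an explicit computation in the ball model using the classification recalled in Section~\ref{SSec:Facts}.
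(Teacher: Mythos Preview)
Your argument is correct and takes a genuinely different route from the paper's own proof. You work in the Poincar\'e ball model with $o$ at the origin, pass to the boundary at infinity, and reduce the lemma to the purely spherical statement that finitely many closed caps of angular radius $\beta<\pi/2$, centered at the antipodes $-\hat p_i$, have empty common intersection; this you settle by a one-line inner-product argument exploiting the half-space hypothesis on $\{p_i\}$. The paper instead argues intrinsically: it builds auxiliary sets $\mathcal Y=\bigcap_{p\in S}\Bb_\lambda(p)$, $\mathcal X(B,p)$ and $\mathcal Y^+(p)$, invokes Blaschke's rolling theorem, and derives a contradiction from a geodesic ray $\gamma\subset\Bb_{\bf p}$ escaping to infinity by showing that $\Bb_{\bf p}$ would then have to contain the great sphere $S_\gamma$ in its interior. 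Your approach is shorter and more transparent once one accepts the ball-model picture; the paper's approach avoids the model-specific computations and stays closer to the synthetic language used elsewhere in the paper.

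Two small remarks. First, your phrasing ``centered along the ray from $o$ through $p_i$, on the far side'' is slightly off: the Euclidean center of $\Ss_i$ lies on the \emph{line} through $o$ and $p_i$ but on the side of $o$ opposite $p_i$ (at $-\hat p_i(R_i-r_E)$), which is exactly what makes the cap sit around $-\hat p_i$; your conclusion is right, only the wording could mislead. Second, you flag the implication ``$\bigcap_i C_i=\varnothing \Rightarrow \bigcap_i\Bb_i$ compact'' as the main obstacle, but it is in fact the easy step: since each $\Bb_i$ is the intersection of the open unit ball with a closed Euclidean ball, any sequence $x_k\in\bigcap_i\Bb_i$ with $|x_k|\to 1$ subconverges to a point of $\bigcap_i C_i$, a contradiction; hence $\bigcap_i\Bb_i$ is a closed hyperbolically bounded set, so compact. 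The genuinely substantive step is the identification $\beta<\pi/2\Leftrightarrow r<\rr_\lambda$, which you correctly locate as the reason the characteristic distance enters; this follows cleanly by noting that $\Ss_i$ and its defining geodesic hyperplane $\mathcal H$ share the same sphere at infinity, and $\mathcal H$ crosses the vertical axis at signed hyperbolic height $r-\rr_\lambda$ from $o$, so it passes through $o$ (giving the equatorial boundary and $\beta=\pi/2$) exactly when $r=\rr_\lambda$.
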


\begin{remark}
	Lemma~\ref{Lem:Cpt2} is false if $r \ge \rr_\lambda$ (compare to Lemma~\ref{Lem:Unique} \eqref{It:3} below).
\end{remark}

\begin{proof}[Proof of Lemma~\ref{Lem:Cpt2}]
	For the proof, we need an auxiliary construction as follows. Let $\mathcal H \subset \Hh^n(c)$ be a hyperplane passing through the center of $B$. Denote by $S = S(\mathcal H) := \partial B \cap \mathcal H$ the corresponding great sphere. For $p \in S$, let $\Bb_\lambda(p) \supset B$ be the convex region bounded by the equidistant of curvature $\lambda$ that touches the ball $B$ at $p$. Let $r$ be the radius of $B$. The assumption $r < \rr_\lambda$ is equivalent to the fact that
	\[
	\mathcal Y := \bigcap_{p \in S} \Bb_\lambda(p)
	\]
	is a compact (convex) body. Equivalently, this means that for each such great sphere $S$, one can find a point, say $q = q(S) \in \Hh^n(c)$, such that each equidistant arc of curvature $\lambda$ joining $q$ with a point $q' \in S$ is perpendicular to $\mathcal H$ at $q'$. By symmetry, the point $q$ lies on the geodesic line $\gamma$ passing through the center of $B$ perpendicular to $\mathcal H$. Moreover, there are exactly two such points, one per each half-space with respect to $\mathcal H$. Call these points $q^+=q^+(S)$ and $q^-=q^-(S)$. Now, for a point $p \in \Hh^n(c)$, define $\mathcal X(B, p)$ to be the intersection of all convex sets of the form $\Bb_\lambda$ containing both $B$ and $p$. In this notation,
	\[
	\mathcal X(B, q^+) = \mathcal X(B, q^-) = \mathcal Y.
	\]
	In general, $\mathcal X(B,p)$ is not necessarily compact. However, since $\mathcal X(B, q^\pm)$ are compact, for every point $p$ sufficiently close in the hyperbolic metric to either $q^+$ or $q^-$, the set $\mathcal X(B, p)$ is also compact. Moreover, if, additionally, $p$ lies on the geodesic $\gamma$ outside of the segment $[q^-,q^+]$ and closer to, say $q^+$, then both $\partial \mathcal X(B,p) \cap \partial B$ and $q^-$ are contained in the same open half-space with respect to $\mathcal H$. By symmetry, the intersection $\partial \mathcal X(B,p) \cap \partial B$ lies in some hyperplane $\mathcal I(p)$. Let $\mathcal I^+(p)$ be the open half-space with respect to $\mathcal I(p)$ that contains $q^+$. Finally, set
	\[
	\mathcal Y^+(p) := B \cup \left(\mathcal X(B,p) \cap \mathcal I^+(p)\right).
	\]  
	
	Let us return back to the proof of the lemma. Assume the contrary, and let $\gamma \subset \Bb_{\bf p}$ be a geodesic ray that starts at the center of $B$ and goes to infinity. Let $\mathcal H_\gamma$ be the hyperplane passing through the center of $B$ perpendicular to $\gamma$, and let $S_\gamma := S(\mathcal H_\gamma)$ be the corresponding great sphere. Since $B \subset \Bb_{\bf p}$, Blaschke's rolling theorem (Theorem~\ref{Thm:Bla}) guarantees that each equidistant arc of curvature $\lambda$ that joins a point in $\gamma$ with a point in $\partial B$, lies in $\Bb_{\bf p}$. In particular, $\mathcal Y^+(p) \subset \Bb_{\bf p}$ for some $p \in \gamma$ sufficiently close to $q^+(S_\gamma) \in \gamma$. But $S_\gamma$ lies in the interior of $\mathcal Y^+(p)$ by construction. This is a contradiction since the open half-space with respect to $\mathcal H_\gamma$ that contains $p$ does not contain points in ${\bf p}$.
\end{proof}

\subsection{Inscribed balls and their properties}

Let $K \subset M^n(c)$ be a convex body. A geodesic ball of the largest possible radius contained in $K$ is called an \emph{inscribed ball for $K$}. The \emph{inradius of $K$}, denoted by $r(K)$, is the radius of an inscribed ball for $K$. Note that, even though there might be several inscribed balls for $K$ (even when $K$ is strictly convex), they must have equal radii. Thus, $r(K)$ is well-defined. The question of uniqueness is addressed in the following lemma: 

\begin{lemma}[On uniqueness of inscribed balls]
\label{Lem:Unique}
Let $K \subset M^n(c)$ be a strictly convex body. 
\begin{enumerate}
\item
\label{It:1}
If $c \ge 0$, then $K$ has a unique inscribed ball;
\item
\label{It:2}
If $c < 0$ and $K$ is $\lambda$-convex with $\lambda \ge \sqrt{-c}$, then $K$ has a unique inscribed ball;
\item
\label{It:3}
For each $c < 0$ and $\lambda' \in (0, \sqrt{-c})$, there is a $\lambda'$-convex body $K$ such that $K$ has exactly two inscribed balls; in particular, in the hyperbolic space, a $\lambda$-convex body can have several inscribed balls.
\end{enumerate}
\end{lemma}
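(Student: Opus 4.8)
The plan is to handle the three cases separately, since they are genuinely different in character. For parts~\eqref{It:1} and~\eqref{It:2} the strategy is the standard convexity argument: suppose $B_0$ and $B_1$ are two inscribed balls of $K$ with the common inradius $r = r(K)$, centered at distinct points $x_0 \ne x_1$, and derive a contradiction by producing a ball of radius strictly larger than $r$ inside $K$. Let $x_t$ be the point on the geodesic segment $[x_0, x_1]$ at parameter $t$, and consider the ball $B_t$ of radius $r$ centered at $x_t$. The key geometric input is a \emph{convexity of distance} statement: for a convex body $K$, the function $t \mapsto \dist(x_t, \partial K)$ is concave along geodesics (equivalently, $\dist(\,\cdot\,, M^n(c) \sm \inter K)$ is a concave function on $K$ when $c \le 0$, and on sufficiently small scales when $c > 0$). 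Since this function equals $\ge r$ at the endpoints, concavity forces it to be $> r$ at the midpoint unless it is constant $= r$ on all of $[x_0,x_1]$; in the latter case the ``tube'' $\bigcup_t B_t$ of radius $r$ around $[x_0,x_1]$ lies in $K$, and one enlarges it slightly near the midpoint using strict convexity of $K$ (the boundary $\partial K$ contains no segment, so the supporting hyperplanes along $\partial B_t \cap \partial K$ genuinely rotate), contradicting maximality of $r$. For $c \le 0$ the concavity of $\dist$ is global and classical (Busemann; it is where $\lambda \ge \sqrt{-c}$ is used in part~\eqref{It:2} to ensure $K$ is compact and the relevant distance function is well-behaved and concave even in the equidistant-free regime); for $c > 0$ one works on the sphere $\S^n(c)$ where a convex body has diameter $< \pi/(2\sqrt c)$, on which scale the distance-to-boundary function is again concave, so part~\eqref{It:1} goes through. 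I expect the main obstacle to be pinning down the precise convexity-of-distance lemma in each model space and verifying it remains valid in the barrier/non-smooth setting --- but this is a known fact about convex bodies in $M^n(c)$ and, in the hyperbolic case, exactly the point where $\lambda \ge \sqrt{-c}$ (compactness, Lemma~\ref{Lem:Cpt1}) enters.

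For part~\eqref{It:3} the plan is an explicit construction in $\Hh^n(c)$ with $\lambda' \in (0,\sqrt{-c})$, exploiting precisely the failure of the above: in this regime the supporting convex sets $\Bb_{\lambda'}$ are equidistant bodies which are non-compact and contain a whole cone of directions to infinity, so $\lambda'$-convex bodies can be ``long'' in a way that defeats the uniqueness argument. Concretely, I would take a geodesic segment $[q^-, q^+]$ of length slightly larger than $2\rr_{\lambda'}$ along a geodesic line $\gamma$, fix a small geodesic ball $B$ of radius $r$ with $r$ slightly less than the characteristic distance $\rr_{\lambda'}$ centered at the midpoint of $[q^-,q^+]$, and let $K$ be (a $\lambda'$-convex polytope approximating, or exactly) the intersection $\mathcal X(B,q^-) \cap \mathcal X(B,q^+)$ in the notation of the proof of Lemma~\ref{Lem:Cpt2} --- i.e. the intersection of all equidistant bodies $\Bb_{\lambda'}$ containing $B$ and one of the two points $q^\pm$. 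By the construction recalled there, $\mathcal X(B,q^+) = \mathcal X(B,q^-) = \mathcal Y$ is compact, and one checks that $B$, as well as its mirror images under the reflections fixing the ``waist'' hyperplanes near $q^+$ and near $q^-$, are inscribed balls; by choosing the length of $[q^-,q^+]$ appropriately one arranges that there are exactly two inscribed balls (a ball near each ``tip'' of this lens-like body), with the waist of the body too narrow to admit any ball of radius $> r$ and too far from either tip to admit a third inscribed ball of radius $r$. The body $K$ so obtained is $\lambda'$-convex by construction (it is an intersection of convex regions $\Bb_{\lambda'}$, hence a $\lambda'$-convex polytope in the sense of Definition~\ref{Def:LambdaConvexPol}, provided the intersection is compact, which holds here), strictly convex after a routine smoothing that preserves the two inscribed balls, and has exactly two inscribed balls.

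The hardest part will be the bookkeeping in part~\eqref{It:3}: ensuring simultaneously that (i) the constructed body is compact and $\lambda'$-convex, (ii) it has at least two inscribed balls of equal radius $r$, and (iii) it has no inscribed ball of radius $> r$ and not more than two of radius $r$. Items (i) and the compactness half of (iii) are essentially Lemma~\ref{Lem:Cpt2} and the characteristic-distance computation already in the paper; item (ii) follows from the reflection symmetries; the delicate point is calibrating the single length parameter so that (iii) holds without the two candidate balls merging into a continuum of inscribed balls --- one monitors the inradius as a function of that parameter and picks a value slightly above the threshold at which a single central inscribed ball splits into two. For parts~\eqref{It:1} and~\eqref{It:2}, the only subtlety is the non-smooth case, handled by the barrier definition and strict convexity as indicated.
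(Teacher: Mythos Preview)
Your plan for parts~\eqref{It:1} and~\eqref{It:2} via concavity of $x \mapsto \dist(x,\partial K)$ reaches the same intermediate conclusion as the paper --- that the tube $H' := \bigcup_t B_t$ lies in $K$, after which strict convexity lets one enlarge a middle ball --- but your stated reasons for concavity are off. On $\S^n(c)$ a convex body need not have diameter below $\pi/(2\sqrt c)$; what actually gives concavity is that $d(\cdot,p)$ is convex on the open hemisphere about each supporting pole $p$, so $\tfrac{\pi}{2\sqrt c}-d(\cdot,p)$ is concave on $K$ and the infimum over $p$ is concave. In the hyperbolic case with $\lambda\ge\sqrt{-c}$, compactness of $K$ is a non-issue (convex bodies are compact by definition); the real reason is that Blaschke rolling writes $K$ as an intersection of balls or horoballs, for each of which the distance to the boundary is concave (either $R-d(\cdot,\text{center})$ or a negative Busemann function), and an infimum of concave functions is concave. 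The paper bypasses the concavity lemma altogether: for $c\ge 0$ it uses $H'\subseteq H:=\text{conv}(B_1\cup B_2)\subset K$, and for $c<0$, $\lambda\ge\sqrt{-c}$ it notes that $\partial H'\setminus(\partial B_1\cup\partial B_2)$ is $\mu$-convex with $\mu\in(0,1)\le\lambda$, so Blaschke rolling gives $H'\subset K$ directly. Either route works once the right ingredient is named.

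Your part~\eqref{It:3} has a genuine gap. The body $\mathcal X(B,q^-)\cap\mathcal X(B,q^+)$ is symmetric under the reflection through the midpoint of $[q^-,q^+]$, which is exactly where you placed $B$; so $B$ is its own mirror, and there is no evident mechanism producing \emph{two} inscribed balls rather than one central one. Nor is it clear that $B$ touches the boundary of this body at all. More tellingly, you impose $r<\rr_{\lambda'}$, which is the regime where equidistant intersections behave tamely (Lemma~\ref{Lem:Cpt2}) --- the opposite of what is needed for non-uniqueness. The paper's construction goes the other way: choose $r>\rr_{\lambda'}$, place two balls $B_1,B_2$ of radius $r$ along a segment $I$, and let $H''$ be the locus of all arcs of curvature $\lambda'$ joining a point of $B_1$ to a point of $B_2$. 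Because the lateral boundary of the tube $H'$ around $I$ is $\mu$-convex with $\mu>\lambda'$, these arcs bulge strictly outside $H'$, and $H''$ is a $\lambda'$-convex body whose only inscribed balls are $B_1$ and $B_2$ (Figure~\ref{Fig:Example}). This avoids the calibration problem you anticipated.
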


\begin{proof}
The argument in the Euclidean space is well-known and goes, for example, as follows. If there are two distinct balls $B_1, B_2 \subset K \subset \R^n$, necessarily of the same radius $r:=r(K)$, then the convex hull $H$ of $B_1$ and $B_2$ lies entirely in $K$. Moreover, since $K$ is strictly convex, any ball $B \subset H$ of radius $r$ different from $B_1$ and $B_2$ must lie in the {interior} of $K$. Hence, one can enlarge $B$ (keeping its center the same) so that the larger ball still lies in $K$. This contradicts the maximality of  $r$. 

We can reinterpret this argument so that it also works in $\S^n$. Again, by contradiction, we assume that there are two different inscribed balls $B_1$ and $B_2$ for $K \subset \S^n$.

Let $H'$ be the locus of balls of radius $r$ centered on the geodesic segment $I$ connecting the centers of $B_1$ and $B_2$. Equivalently, $H'$ is the set of points at distance at most $r$ from $I$. In the Euclidean space, $H' = H$. In the spherical case, we have that $H' \subsetneq H \subset K$, and hence the argument above applied to $H'$ yields that a strictly convex body $K \subset \S^n$ has the unique inscribed ball. This concludes~\eqref{It:1}.

Now we consider the hyperbolic case. For simplicity, assume $c = -1$ and write $\Hh^n := \Hh^n(-1)$. The inclusion $H' \subsetneq H$ does not hold in $\Hh^n$. Observe, however, that $\partial H' \setminus (\partial B_1 \cup \partial B_2)$ is $\mu$-convex, with $\mu = \tanh (2r) \in (0,1)$ (i.e., $r = \rr_\mu$, see \eqref{It:Horo} in Section~\ref{SSec:Facts}). If we are in the assumption of case \eqref{It:2}, then $\mu < \lambda$. By Blaschke's rolling theorem it follows that $H' \subsetneq K$, and hence we can reach a contradiction in the same way as in $\R^n$ and $\S^n$. This concludes \eqref{It:2}.	

\begin{figure}
	\includegraphics[trim= 20 20 20 20, clip, scale=1.7]{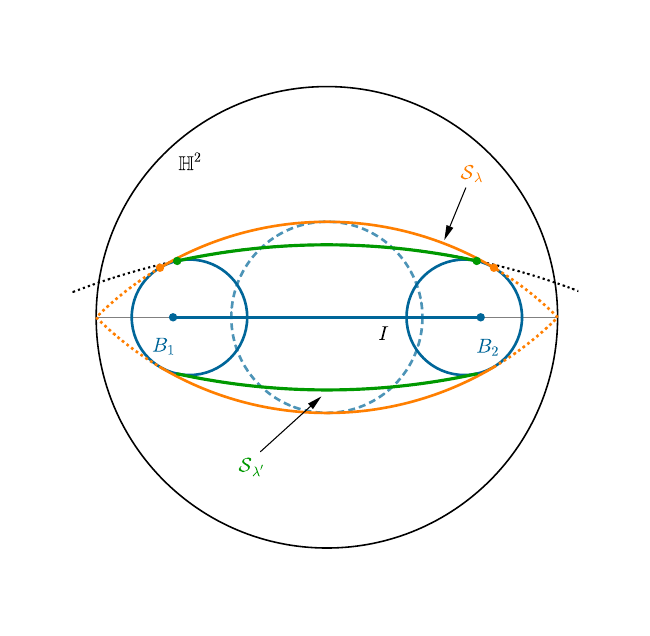}
	\caption{In the Poincar\'e disk model of $\Hh^2$: the body $H''$ is the concatenation of two circular arcs on the boundary of disks $B_1$ and $B_2$ (in blue), and two green arcs of curves $\Ss_{\lambda'}$ of constant curvature $\lambda' \in (0,1)$ (in green). The outer parallel set $H'$ of the segment $I$ is the concatenation of two orange arcs $\Ss_\lambda$ and two blue circular arcs on the boundary of the disks. For comparison, the disk centered at the origin has the same radius as $B_1$ and $B_2$. In this example, $B_1$ and $B_2$ are the only inscribed disks for $H''$.}
	\label{Fig:Example}
\end{figure}

In general, there is no guarantee that $H' \subset K$. Also, $H'$ provides an example of a $\mu$-convex set, with $\mu \in (0,1)$, for which all balls of radius $r$ inside $H'$ are inscribed, and there is a continuous family of such balls. To conclude \eqref{It:3}, one can modify this example as follows (see Figure~\ref{Fig:Example} for a construction in $\Hh^2$). Given $\lambda' \in (0,1)$, pick a geodesic segment $I \subset \Hh^n$ and let $H'$ be defined as above for $r > \rr_{\lambda'}$. Consider all possible arcs of curves of curvature $\lambda'$ joining points in $B_1$ and $B_2$. These arcs will sweep out a $\lambda'$-convex body $H''$ for which $B_1$ and $B_2$ are the only inscribed balls.
\end{proof}

The following proposition gives a necessary and some sufficient conditions under which a ball inside a convex body is an inscribed ball. Parts of this proposition are well-known (and are not the most general) and we outline the proof for completeness of exposition. 

\begin{proposition}[Necessary and sufficient conditions for inscribed balls]
\label{Prop:Properties}
Let $K \subset M^n(c)$ be a convex body, and let $B \subset K$ be a geodesic ball centered at some point $o \in K$. 
\begin{enumerate}
\item
\label{It:PartI}
If $B$ is an inscribed ball for $K$, then $\partial B \cap \partial K$ is not contained in any open half-space with respect to a hyperplane passing through $o$.
\item
\label{It:PartII}
If $K$ is a $\lambda$-convex polytope, $\partial B \cap \partial K$ is not contained in any open half-space with respect to a hyperplane passing through $o$, and $B$ touches all facets of $K$, then $B$ is an inscribed ball for $K$. Moreover, it is unique.
\end{enumerate}
\end{proposition}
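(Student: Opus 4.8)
The plan is to prove the two parts separately, with Part~\eqref{It:PartI} being a soft convexity argument and Part~\eqref{It:PartII} the real content.

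\textbf{Part \eqref{It:PartI} (necessity).} I would argue by contrapositive. Suppose $\partial B \cap \partial K$ lies in an open half-space $\{x : \langle v, \exp_o^{-1}(x)\rangle > 0\}$ determined by a hyperplane $\mathcal H$ through $o$ with inward normal direction $v \in T_o M^n(c)$. The idea is that we can then push the center of $B$ slightly in the direction $-v$ (i.e.\ away from the contact set), keeping radius $r$, and the moved ball will still lie in $K$; a further small enlargement then contradicts maximality of $r=r(K)$. To make this rigorous I would use compactness: the contact set $\partial B \cap \partial K$ is compact and contained in the open half-space, so there is $\delta>0$ with $\langle v, \exp_o^{-1}(x)\rangle \ge \delta$ for all $x$ in the contact set. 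For points of $\partial B$ outside the contact set, $\dist(x, \partial K) > 0$, and again by compactness this distance is bounded below by some $\eps > 0$ on the complement of a small neighborhood of the contact set. Moving $o$ a distance $t$ along the geodesic in direction $-v$: near the contact set the boundary sphere moves \emph{into} $\inter K$ to first order (because it moves against the supporting directions), while away from the contact set the displacement is $O(t)$ and is absorbed by the $\eps$-gap. Hence for small $t$ the translated ball $B_t$ lies in $\inter K$, and then $\dist(o_t, \partial K) > r$, so a slightly larger concentric ball fits in $K$ — contradicting that $r$ is the inradius. One technical point in curved model spaces: ``translating'' a ball along a geodesic, and the first-order behavior of distances, should be phrased via the first variation formula for $\dist(\cdot, \partial K)$ or via moving supporting hypersurfaces; I expect the cleanest route is to note $\dist(o_t,\partial K) = \min_{x \in \partial K}\dist(o_t,x)$ and estimate the derivative at $t=0$ using that the minimizing $x$'s are exactly the contact points, combined with the half-space assumption.

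\textbf{Part \eqref{It:PartII} (sufficiency for $\lambda$-convex polytopes).} Here I would use Blaschke's rolling theorem (Theorem~\ref{Thm:Bla}) together with the $\lambda$-convex polytope structure. Since $K = \bigcap_{j=1}^N \Bb_\lambda^{(j)}$ and $B$ touches every facet, on each $\Bb_\lambda^{(j)}$ the ball $B$ is inscribed \emph{in that single region} in an optimal local sense: $B$ and $\Bb_\lambda^{(j)}$ are tangent, so $B$ is a ``maximal rolling ball'' against the supporting hypersurface $\Ss_\lambda^{(j)}$. Now suppose for contradiction that some ball $B'$ of radius $r' > r$ lies in $K$. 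Then $B' \subset \Bb_\lambda^{(j)}$ for each $j$; since $r(B') = r' > r = r(B)$ and $B$ is tangent to $\Ss_\lambda^{(j)}$ from inside, a ball of radius $> r$ cannot fit inside $\Bb_\lambda^{(j)}$ \emph{while staying on the same side near $p_j$} — the point is that the contact point $p_j$ of $B$ with $\Ss_\lambda^{(j)}$ forces a curvature comparison. More carefully: I would show that the family of facets, together with the half-space condition on $\partial B \cap \partial K$, pins down $B$ uniquely. For uniqueness and maximality simultaneously, the slick argument is: if $B_1, B_2 \subset K$ are both balls of radius $r(K)$, form (as in the proof of Lemma~\ref{Lem:Unique}) the outer $r(K)$-neighborhood $H'$ of the segment joining their centers; $\partial H' \setminus (\partial B_1 \cup \partial B_2)$ has curvature $\mu = \mu(r(K)) $, and one checks this is a $\mu$-convex set with $\mu$ possibly $\ne \lambda$. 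But here $K$ being a $\lambda$-convex polytope and $B_i$ touching all facets lets us instead argue directly: each facet's supporting region $\Bb_\lambda^{(j)}$ contains $H'$ only if the segment is degenerate, because a genuine ``sausage'' of radius $r$ cannot fit inside $\Bb_\lambda^{(j)}$ tangent at a single point unless $r \le 1/\lambda$ with equality configuration — I'd rather avoid case analysis and instead invoke Part~\eqref{It:PartI} applied to $B'$: if $B'$ had radius $> r$, consider that $B$ already touches all facets, and use the tangency to get a direction into which $B$ cannot be enlarged, contradicting existence of the bigger $B'$ after a translation argument.

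\textbf{Anticipated main obstacle.} The delicate step is Part~\eqref{It:PartII}: converting ``touches all facets'' plus ``contacts not in a half-space'' into genuine \emph{global} maximality of $r$, not just local criticality. Local criticality follows easily from Part~\eqref{It:PartI}'s converse-type reasoning (the half-space condition says $o$ is a critical point of $\dist(\cdot,\partial K)$), but a convex body can in principle have a local-but-not-global maximum of the inradius function only if that function fails to be concave along geodesics — so I expect the real work is a concavity/monotonicity statement: the function $o \mapsto \dist(o, \partial \Bb_\lambda^{(j)})$ is concave along geodesics in $\Bb_\lambda^{(j)}$ (this is where $\lambda$-convexity of the region, i.e.\ the sign of the second fundamental form of $\Ss_\lambda^{(j)}$, enters), hence $o \mapsto \dist(o,\partial K) = \min_j \dist(o,\partial\Bb_\lambda^{(j)})$ is concave, so any critical point is a global max, giving maximality; and strict concavity transverse to the (at most one-dimensional) intersection of contact facets gives uniqueness. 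I would isolate this concavity fact as the technical heart of the proof, proving it via the second variation of distance to a totally umbilic hypersurface in $M^n(c)$ (equivalently, comparing with the model computation for $\Ss_\lambda$ being a sphere/horosphere/equidistant), and then the rest of Part~\eqref{It:PartII} is bookkeeping.
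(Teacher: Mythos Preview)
Your Part~\eqref{It:PartI} is essentially the paper's argument: translate the ball away from the contact set along the geodesic normal to the separating hyperplane and enlarge it. Fine.

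For Part~\eqref{It:PartII} your final plan --- concavity of $x\mapsto\dist(x,\partial K)=\min_j\dist(x,\partial\Bb_\lambda^{(j)})$ along geodesics, so that the critical point $o$ forced by the half-space condition is automatically a global maximum --- is genuinely different from the paper, which instead passes to the Poincar\'e ball model, observes that each $\Ss_\lambda^{(j)}$ and $\partial B$ become Euclidean spheres, and invokes the uniqueness of a Euclidean sphere internally tangent to $n+1$ given spheres (reducing to a lower-dimensional symmetric slice when the contact set spans only an $m$-plane). Your approach is cleaner when it works and explains \emph{why} the critical point is a maximum rather than checking it by rigidity.

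However, your concavity claim fails precisely in the hyperbolic equidistant case $c<0$, $\lambda<\sqrt{-c}$. Writing $\Ss_\lambda$ as the set at signed distance $-\rr_\lambda$ from its core hyperplane $\mathcal H$, one has $\dist(x,\Ss_\lambda)=\rho(x)+\rr_\lambda$ where $\rho$ is the signed distance to $\mathcal H$. The Hessian of $\rho$ along directions tangent to its level sets equals $\pm\tanh|\rho|$, with the sign flipping across $\mathcal H$: $\rho$ is concave on $\{\rho<0\}$ (the slab between $\Ss_\lambda$ and $\mathcal H$) but \emph{convex} on $\{\rho>0\}$. A compact $\lambda$-convex polytope can certainly contain points on the far side of some $\mathcal H_j$, so $\dist(\cdot,\partial K)$ need not be concave on $K$, and a critical point need not be a global maximum. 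This is not a pathology one can wave away: it is exactly the mechanism behind Lemma~\ref{Lem:Unique}\,\eqref{It:3}, where $\lambda$-convex bodies with two inscribed balls are built. So your ``technical heart'' is false as stated in that regime, and the proof collapses there. The paper's Poincar\'e-model argument sidesteps this because it never uses concavity --- only the algebraic rigidity of tangent Euclidean spheres, which is insensitive to whether $\Ss_\lambda$ is a sphere, horosphere, or equidistant. If you want to rescue your route, you would need either to restrict to $r(K)<\rr_\lambda$ (which is in fact the only regime the paper ever applies Part~\eqref{It:PartII} in, cf.\ Proposition~\ref{Prop:Reduction} and assumption~\eqref{Eq:AssEquidistant}) and argue that the relevant portion of $K$ stays in the concave region for the active facets, or to abandon concavity in that case and use a direct argument.
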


\begin{proof}
We start with part~\eqref{It:PartI}. Towards a contradiction, suppose that $S := \partial B \cap \partial K$ lies in some open half space $\mathcal H^+$. Write $\mathcal H := \partial \mathcal H^+$, $o \in \mathcal H$. Let $\mathcal H^- := M^n(c) \setminus \mathcal H^+$ be the complement of $\mathcal H^+$. Pick a small $\eps > 0$, and consider a geodesic segment  $I_\eps \subset \mathcal H^-$ of length $\eps$, starting at $o$ and perpendicular to $\mathcal H$. Let $r = r(K)$ be the radius of $B$. Define $H'_\eps$ to be the locus of balls of radius $r = r(K)$  centered at $I_\eps$. Since $B$ and $K$ are compact and $S$ is contained in $\mathcal H^+$, the distance
\[
\dist_{M^n(c)} \left(\partial K, \mathcal H^- \cap B\right)
\] 
is strictly positive. Therefore, there exists $\eps$ such that $H'_\eps \subset K$. To reach a contradiction,  we can apply the argument that is similar to the one that was used in the proof of Lemma~\ref{Lem:Unique}~\eqref{It:1}.

Now we deal with part~\eqref{It:PartII}. We will give an argument in the hyperbolic case; in other cases, the argument is similar and we omit it. 

Assume first that $\partial B \cap \partial K$ contains at least $n+1$ distinct points $p_1, \ldots, p_{n+1}$ such that the smallest totally geodesic subspace containing this set of points is equal to $M^n(c)$. By assumption, each $p_i$ is the tangent point of $B$ and some of the facets. In the Poincar\'e disk model of $\Hh^n(c)$, each $\Ss_\lambda(p_i)$ is a Euclidean sphere. Furthermore, $\partial B$ is a Euclidean sphere too. But in $\R^n$, there exists only one sphere that lies in the interior and touches any given collection of $n+1$ mutually distinct spheres. If $B'$ is the inscribed ball for $K$, then $\partial B' \cap \partial K$ is not contained in any open half space with respect to the origin (part \eqref{It:PartI}). By the uniqueness described above, $B = B'$.

In general, let $1 \le m < n$ be the smallest dimension of a totally geodesic subspace $Y \subset M^n(c)$ such that $o \in Y$ and $\partial B \cap \partial K \subset Y$. Since $\partial B \cap \partial K$ is not contained in any open half-space with respect to a hyperplane passing through $o$, the body $K$ must be symmetric with respect to $Y$. Hence, any inscribed ball for $K$ is also symmetric with respect to $Y$. Note that within $Y$, the points in $\partial B \cap \partial K$ cannot lie in any open half-space of $Y$ with respect to a plane of dimension $m-1$ passing through $o$. By minimality of $m$, we can choose at least $m+1$ distinct points $p_1, \ldots, p_{m+1}$ in $\partial B \cap \partial K$ so that the geodesic rays $op_1, \ldots, op_{m+1}$ span $Y$. We can now apply a similar uniqueness argument as above. For every $m < n$, there exists a unique Euclidean sphere in $\R^{n}$ that lies inside and touches any $m$ given Euclidean spheres, and is symmetric with respect to an $m$-dimensional Euclidean plane (we can assume that $o$ is the center of the unit ball model). The proposition now follows in the same way as in the case of $n+1$ distinct points.
\end{proof}

The following proposition relates the extreme properties of the inradius to the structure of the boundary of a $\lambda$-convex body. 

\begin{proposition}[Extreme properties of the inradius]
\label{Prop:Reduction}
Let $K \subset M^n(c)$ be a $\lambda$-convex body. (In the case $c < 0$ and $\lambda < \sqrt{-c}$, we additionally assume that $r(K) < \rr_\lambda$.)

If $K \subset M^n(c)$ is not a $\lambda$-convex polytope, then one can find a $\lambda$-convex polytope $K'$ such that 
\begin{equation}
\label{Eq:Red}
r(K') = r(K) \quad \text{ and }\quad |\partial K'| > |\partial K|.
\end{equation}
Equivalently, there exists a $\lambda$-convex polytope $K''$ such that
\begin{equation}\label{Eq:Red2}
r(K'') < r(K) \quad \text{ and }\quad |\partial K''| = |\partial K|.
\end{equation}
\end{proposition}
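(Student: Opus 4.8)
The plan is to realize $K'$ as a finite intersection of supporting convex sets of curvature $\lambda$ of $K$ that retains an inscribed ball of $K$ as its own inscribed ball, and then to produce $K''$ by shrinking $K'$ until its surface area returns to $|\partial K|$. Concretely, I would first fix an inscribed ball $B\subset K$ with center $o$ and radius $r:=r(K)$. By Proposition~\ref{Prop:Properties}~\eqref{It:PartI}, the contact set $S:=\partial B\cap\partial K$ is not contained in any open half-space with respect to a hyperplane through $o$; since $S$ is compact, Carath\'eodory's theorem lets me extract finitely many points $p_1,\dots,p_m\in S$ (with $m\le n+1$) whose set has the same property. Blaschke's rolling theorem (Theorem~\ref{Thm:Bla}) then furnishes, for each $i$, a supporting convex set $\Bb_\lambda(p_i)\supseteq K$ of curvature $\lambda$ through $p_i$; put $K':=\bigcap_{i=1}^m\Bb_\lambda(p_i)$. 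This set is $\lambda$-convex (each of its boundary points lies on some $\partial\Bb_\lambda(p_i)$, a barrier), and it is compact: for $c\ge 0$, or $c<0$ and $\lambda\ge\sqrt{-c}$, by Lemma~\ref{Lem:Cpt1}; in the hyperbolic equidistant case $\lambda<\sqrt{-c}$, by Lemma~\ref{Lem:Cpt2}, since $B\subseteq\Bb_\lambda(p_i)$ with $\partial B$ tangent to $\partial\Bb_\lambda(p_i)$ at $p_i$, the radius $r$ is smaller than $\rr_\lambda$ by hypothesis, and the $p_i$ do not lie in any open half-space through $o$. So $K'$ is a $\lambda$-convex polytope.

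To prove \eqref{Eq:Red} with this $K'$, note first that $B\subseteq K\subseteq K'$, hence $r(K')\ge r$. For the reverse inequality, each $p_i$ lies in $\partial B\cap\partial K'$ (it lies on $\partial\Bb_\lambda(p_i)$, hence not in $\inter K'$, and it lies in $K$) and on the facet of $K'$ carried by $\Bb_\lambda(p_i)$; since every facet of $K'$ is of this form, $B$ touches all facets of $K'$, while $\partial B\cap\partial K'\supseteq\{p_1,\dots,p_m\}$ is not contained in any open half-space through $o$. By Proposition~\ref{Prop:Properties}~\eqref{It:PartII}, $B$ is then the (unique) inscribed ball of $K'$, so $r(K')=r=r(K)$. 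Finally $K\subseteq K'$ and $K\ne K'$, for otherwise $K$ would be a finite intersection of regions $\Bb_\lambda$, i.e.\ a $\lambda$-convex polytope, against our assumption; by strict monotonicity of surface area for nested convex bodies, $|\partial K'|>|\partial K|$.

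For \eqref{Eq:Red2}, I would shrink $K'$ by pushing its defining regions inward toward $o$. For $t\in[0,r)$ and each $i$, let $\Bb_i^t$ be the image of $\Bb_\lambda(p_i)$ under the ambient isometry translating by distance $t$ along the geodesic through $p_i$ and $o$ in the direction from $p_i$ to $o$, and set $K'_t:=\bigcap_i\Bb_i^t$, so $K'_0=K'$. By the triangle inequality the ball of radius $r-t$ about $o$ lies in each $\Bb_i^t$, hence in $K'_t$, and is tangent to $\partial\Bb_i^t$ at the image of $p_i$ under that translation, a point lying on its boundary sphere in the same direction from $o$ as $p_i$. Repeating the reasoning of the previous paragraph with this ball in place of $B$ (the compactness step again via Lemma~\ref{Lem:Cpt2} in the equidistant case, valid since $r-t<r<\rr_\lambda$) shows that $K'_t$ is a $\lambda$-convex polytope with $r(K'_t)=r-t$. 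The surface area being continuous on convex bodies in the Hausdorff metric, $t\mapsto|\partial K'_t|$ is continuous, equals $|\partial K'|>|\partial K|$ at $t=0$, and tends to $0$ as $t\to r$ (the curvature of $\partial\Bb_i^r$ near $o$, together with the half-space condition on the directions of the $p_i$, forces $\bigcap_i\Bb_i^r=\{o\}$). By the intermediate value theorem there is $t_\ast\in(0,r)$ with $|\partial K'_{t_\ast}|=|\partial K|$, and $K'':=K'_{t_\ast}$ then satisfies $r(K'')=r-t_\ast<r(K)$. The reverse implication, \eqref{Eq:Red2}$\Rightarrow$\eqref{Eq:Red}, is entirely analogous, expanding the defining regions outward until the inradius grows back to $r(K)$.

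The main obstacle — and the one place the standing hypothesis $r(K)<\rr_\lambda$ is needed — is the compactness of these finite intersections in the hyperbolic equidistant regime, which is exactly what Lemma~\ref{Lem:Cpt2} provides; invoking it, and Proposition~\ref{Prop:Properties}~\eqref{It:PartII}, requires carrying along at every step the two conditions that the candidate inscribed ball touches all facets and that its contact set is not contained in a half-space through its center.
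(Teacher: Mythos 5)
Your proposal is correct and follows essentially the same route as the paper: the same finite intersection of supporting sets $\Bb_\lambda(p_i)$ at contact points not lying in an open half-space (compactness via Lemmas~\ref{Lem:Cpt1}--\ref{Lem:Cpt2}, inradius preservation via Proposition~\ref{Prop:Properties}~\eqref{It:PartII}), followed by a one-parameter shrinking family and the intermediate value theorem. Your family obtained by translating the $\Bb_\lambda(p_i)$ along the geodesics $p_io$ coincides with the paper's family of supporting sets tangent to the concentric ball of radius $s=r-t$, so the difference is purely cosmetic.
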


\begin{proof}
We start with \eqref{Eq:Red}. 

Let $B$ be an inscribed ball for $K$, and let $o$ be its center. By Proposition~\ref{Prop:Properties} \eqref{It:PartI}, $\partial B \cap \partial K$ is not contained in any open half space with $o$ on its boundary. Pick a subset $T \subset \partial B \cap \partial K$ consisting of at most $n+1$ points that have the same property, i.e., they do not lie in any open half space with respect to hyperplanes through $o$. Since $K$ is $\lambda$-convex, for each $p \in T$ there exists a supporting convex set $\Bb_\lambda(p)$ of curvature $\lambda$ passing through $p$. Define the following intersection:
\[
K' = \bigcap_{p \in T} \Bb_\lambda(p).
\] 
By Lemmas~\ref{Lem:Cpt1} and \ref{Lem:Cpt2}, $K'$ is a $\lambda$-convex polytope. By Blaschke's rolling theorem (Theorem~\ref{Thm:Bla}), $K' \supset K$. Hence, $|\partial K'| > |\partial K|$. This inequality is strict because $K$ is not a $\lambda$-convex polytope by assumption. Furthermore, $B \subset K'$ and $T \subset \partial B$ by construction. By part~\eqref{It:PartII} of Proposition~\ref{Prop:Properties}, $B$ is the inscribed ball for $K'$. Therefore, $r(K') = r(K)$. This concludes \eqref{Eq:Red}.

To prove \eqref{Eq:Red2}, we use \eqref{Eq:Red} and a monotonicity argument. Indeed, let $K'$ be the polytope that satisfies \eqref{Eq:Red}, and let $B$ be its inscribed ball of radius $r = r(K)$. For $s \le r$, let $B'$ be a ball of radius $s$ concentric with $B$. For each $p\in T$, let $p'$ be the intersection of the geodesic ray $op$ with $\partial B'$. Let $T'$ be the set of all such $p'$'s. For each $p'$, let $\Ss_\lambda(p')$ be the $\lambda$-convex hypersurface that touches $B'$ at $p'$ in such a way that $\Bb_\lambda(p') \supset B'$. Finally, define 
\[
K''(s) := \bigcap_{p' \in T'} \Bb_\lambda(p').
\] 
Since $s \le r$, $K''(s)$ is a $\lambda$-convex polytope. By Proposition~\ref{Prop:Properties} \eqref{It:PartII}, $r\left(K''(s)\right) = s$. By construction, $K''(r) = K'$, and hence by assumption, $|\partial K''(r)| > |\partial K|$. By continuity, there exists $s_0 < r$ such that $|\partial K''(s_0)| = |\partial K|$, and the proposition follows with $K'' = K''(s_0)$. 
\end{proof}

\begin{remark}
We will use a similar monotonicity argument in the proof of Theorem~\ref{Thm:MainB}.
\end{remark}

\subsection{Inner parallel bodies in $\R^n$ and their properties}

Let $A$ be a convex body in $\R^n$. The \emph{inner parallel body} at distance $t \ge 0$ is defined as
\[
A_t = A -t \B := \{x \in \R^n : x + t \B \subset A\},
\]
where $\B$ is the unit Euclidean ball. The greatest number $t$ for which $A_t$ is not empty is the inradius $r(A)$ of $A$. Observe that if $A \subset A' \subset \R^n$ are two convex bodies, then $A_t \subset A'_t$.

The Hausdorff distance of convex bodies $A, A'$ in $\R^n$ is defined as
\[
d_H(A,A') = \min\{\mu \ge 0: \ A \subset A' + \mu \B, \ A' \subset A + \mu \B \}.
\] 

For a convex body $A \subset \R^n$ and $0 \le \eps \le r(A)$, define
\begin{equation}\label{def:T}
	T_A(\eps) := \min \left\{\mu \ge 0 : A - \eps \B + \mu \B \supset A\right\}.
\end{equation}
Clearly, $T_A(\eps) \ge \eps$, but this inequality can be strict. Observe that $T_A(\eps) = d_H(A, A_\eps)$. 

\begin{lemma}[Dependence on $\eps$]
\label{Lem:Mon1}
\[
T_A(\eps) \to 0 \quad \text{as} \quad \eps\searrow 0. 
\]
\end{lemma}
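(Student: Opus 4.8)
The plan is to prove the quantitative continuity statement $T_A(\eps)\to 0$ as $\eps\searrow 0$ directly from compactness and the definition \eqref{def:T}. First I would observe that $T_A$ is monotone in $\eps$: if $0\le\eps_1\le\eps_2$ then $A-\eps_2\B\subseteq A-\eps_1\B$, so any $\mu$ that works for $\eps_1$ (i.e.\ $A-\eps_1\B+\mu\B\supseteq A$) also works for $\eps_2$ after enlarging, which gives $T_A(\eps_1)\le T_A(\eps_2)$ up to the elementary monotonicity of inner parallel bodies; more simply, $A_{\eps_2}\subseteq A_{\eps_1}$ and $d_H(A,A_{\eps_2})\ge d_H(A,A_{\eps_1})$ need not be literally monotone, so I would instead argue that the limit $L:=\lim_{\eps\searrow 0}T_A(\eps)$ exists because $T_A(\eps)\ge\eps$ and $T_A$ is nonincreasing as $\eps\searrow 0$ only in the weak sense — to be safe I will just work with $\limsup_{\eps\searrow 0}T_A(\eps)$ and show it is $0$.

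The main step is the following: fix $\delta>0$ and show $T_A(\eps)\le\delta$ for all sufficiently small $\eps$. Since $A$ is a convex body it has nonempty interior, so pick an interior point $x_0$ and a radius $\rho>0$ with $x_0+\rho\B\subseteq A$. For $\theta\in(0,1)$ consider the contraction $A^\theta:=x_0+(1-\theta)(A-x_0)$ toward $x_0$; convexity gives $A^\theta+\theta\rho\B\subseteq A$, hence $A^\theta\subseteq A_{\theta\rho}$, i.e.\ $A^\theta\subseteq A-\theta\rho\,\B$. Consequently, for $\eps=\theta\rho$ we get $A-\eps\B\supseteq A^\theta$, and since $d_H(A,A^\theta)=\theta\,d_H(A,\{x_0\})\cdot(\text{const})$ — explicitly $A\subseteq A^\theta+\theta\,\diam(A)\,\B$ because every point of $A$ is within $\theta\diam(A)$ of its image under the contraction — we obtain $A\subseteq(A-\eps\B)+\theta\,\diam(A)\,\B$. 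By the definition \eqref{def:T} this yields $T_A(\eps)\le\theta\,\diam(A)=\dfrac{\eps}{\rho}\,\diam(A)$. Thus $T_A(\eps)$ is bounded above by a linear function of $\eps$ that vanishes at $0$, which gives the claim (and in fact the stronger rate $T_A(\eps)=O(\eps)$).

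I expect the only subtlety — hardly an obstacle — to be verifying the inclusion $A^\theta+\theta\rho\B\subseteq A$ cleanly: one writes a point of $A^\theta+\theta\rho\B$ as $x_0+(1-\theta)(a-x_0)+\theta\rho u$ with $a\in A$, $|u|\le 1$, rewrites it as $(1-\theta)a+\theta(x_0+\rho u)$, and notes both $a\in A$ and $x_0+\rho u\in A$, so the convex combination lies in $A$. Everything else is bookkeeping with Hausdorff distance and \eqref{def:T}. An alternative, slightly softer route avoiding the explicit rate: argue by contradiction using compactness — if $T_A(\eps_k)\ge\delta$ along some $\eps_k\searrow 0$, then there are points $a_k\in A$ with $\dist(a_k,A_{\eps_k})\ge\delta$; pass to a convergent subsequence $a_k\to a\in A$, and since $A_{\eps_k}$ Hausdorff-converges to $A_0=A$ (because $A-\eps\B\to A$), the distance $\dist(a_k,A_{\eps_k})\to\dist(a,A)=0$, a contradiction. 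I would present the explicit estimate as the main proof since it is short and self-contained, and it also feeds directly into whatever quantitative use Lemma~\ref{Lem:Mon1} has later.
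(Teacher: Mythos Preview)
Your main argument is correct and takes a genuinely different route from the paper. The paper proceeds softly: it observes that the bodies $A_\eps$ are nested as $\eps$ decreases, invokes Blaschke's Selection Theorem to identify the Hausdorff limit as $A':=\Cl\bigl(\bigcup_{\eps>0}A_\eps\bigr)$, and then checks $\inter A'=\inter A$ (hence $A'=A$) by noting that every interior point of $A$ lies in some $A_\delta$. You instead give a direct quantitative estimate via homothety toward an interior point: with $x_0+\rho\B\subseteq A$ and $A^\theta=x_0+(1-\theta)(A-x_0)$, the identity $(1-\theta)a+\theta(x_0+\rho u)\in A$ yields $A^\theta\subseteq A_{\theta\rho}$, while $|a-(x_0+(1-\theta)(a-x_0))|=\theta|a-x_0|$ gives $A\subseteq A^\theta+\theta\diam(A)\B$, so $T_A(\eps)\le(\diam(A)/\rho)\,\eps$ for $\eps<\rho$. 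This buys you an explicit linear rate $T_A(\eps)=O(\eps)$ and avoids any appeal to selection/compactness, at the cost of a slightly more hands-on computation; the paper's version is cleaner to state but qualitative.

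Two minor remarks. Your opening paragraph on monotonicity is muddled (you notice this yourself); drop it entirely, since the homothety argument already gives a uniform bound for all small $\eps$ and no monotonicity or $\limsup$ discussion is needed. Your ``alternative softer route'' is essentially the paper's proof, but as written it is circular: you assume ``$A-\eps\B\to A$'' to conclude $\dist(a_k,A_{\eps_k})\to 0$, which is exactly the statement $T_A(\eps)\to 0$ you are proving. If you want to keep it, replace that step with the paper's argument identifying the limit of the nested $A_\eps$ with $A$.
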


\begin{proof}
For monotonically decreasing $\eps$, the bodies in the sequence $A_\eps = A - \eps \B$ are nested, i.e., $A_\eps \subsetneq A_{\eps'}$ if $\eps > \eps'$. Hence, by Blaschke's Selection Theorem \cite[p.\,63]{Sch}, there exists the Hausdorff limit of this sequence and it is equal to the convex body
\[
A' := \Cl \left(\bigcup_{\eps > 0} A_\eps \right),
\]
where $\Cl$ denotes the closure of a set. Note that the interior of $A'$, denoted by $\inter A'$, is equal to $\bigcup_{\eps > 0} A_\eps$. Finally, observe that for each $x \in \inter A$ there exists $\delta = \delta(x)$ (which can be arbitrary small) so that $x + \delta \B \subset A$. For example, $\delta$ can be chosen to be the half of the distance of $x$ to $\partial A$. By definition of inner parallel bodies, $x \in A_\delta$. Thus, $x \in \inter A'$ for every $x \in \inter A$, which implies that $\inter A' = \inter A$, and hence $A = A'$. By construction, $d_H(A', A_\eps) \to 0$ as $\eps \to 0$. Thus, $T_A(\eps) = d_H(A, A_\eps) \to 0$ as $\eps \to 0$, which is the claim of the lemma.
\end{proof}

\begin{lemma}[Dependence on $A$]
\label{Lem:Mon2}
Let $A$ and $A'$ be convex bodies in $\R^n$ such that $A' \supset A$. Then
\[
T_{A'}(\eps) \le T_A(\eps) + d_H(A,A').
\]
\end{lemma}

\begin{proof}
We have the following implications:
\begin{equation*}
\begin{aligned}
A' \supset A \ \Rightarrow \  A' - \eps \B \supset A - \eps \B \  \Rightarrow \  A' - \eps \B + T_A(\eps) \B \supset A - \eps \B + T_A(\eps) \B \supset A, 
\end{aligned}
\end{equation*}
where the last inclusion follows by \eqref{def:T}. Using the definition of the Hausdorff distance, we obtain
\begin{equation*}
\begin{aligned}
A' - \eps \B + T_A(\eps) \B \supset A \  \Rightarrow \  A' - \eps \B + T_A(\eps) \B + d_H(A, A') \B \supset A + d_H(A,A') \B \supset A'.
\end{aligned}
\end{equation*}
Therefore, $A' - \eps \B + \left(T_A(\eps) + d_H(A, A')\right) \B \supset A'$, and hence $T_{A'}(\eps) \le T_A(\eps) + d_H(A, A')$, as desired.
\end{proof}

\begin{lemma}[Hausdorff limits of nested sequences]
\label{Lem:HD3}
If $A^1 \supset A^2 \supset... \supset C$ is a nested sequence of convex bodies, then 
\[
\mathcal A := \bigcap_{j=1}^\infty A^j
\]
is a convex body and 
\[
d_H\left(A^j, \mathcal A\right) \to 0 \quad \text{ as }\quad j \to \infty.
\]
\end{lemma}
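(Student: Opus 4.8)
The plan is to establish the two conclusions of Lemma~\ref{Lem:HD3} in turn: first that $\mathcal A = \bigcap_{j} A^j$ is a convex body (in particular, has non-empty interior), and then that $A^j \to \mathcal A$ in the Hausdorff metric. Convexity of $\mathcal A$ is immediate since an intersection of convex sets is convex, and $\mathcal A$ is closed and bounded as an intersection of compact sets, hence compact. The one genuine point is that $\mathcal A$ has non-empty interior: this is exactly the hypothesis $C \subseteq A^j$ for all $j$, which forces $C \subseteq \mathcal A$, and $C$ being a convex body has non-empty interior. So $\mathcal A$ is a convex body.

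For the Hausdorff convergence, I would invoke Blaschke's Selection Theorem (already cited in the proof of Lemma~\ref{Lem:Mon1} as \cite[p.\,63]{Sch}). The sequence $(A^j)$ is contained in the fixed compact set $A^1$, so every subsequence has a Hausdorff-convergent sub-subsequence. It therefore suffices to show that every Hausdorff-convergent subsequence of $(A^j)$ has limit equal to $\mathcal A$; then a standard subsequence argument gives $d_H(A^j, \mathcal A) \to 0$. So suppose $A^{j_k} \to \mathcal A'$ in the Hausdorff metric. From $A^{j} \supseteq \mathcal A$ for all $j$ one gets $\mathcal A' \supseteq \mathcal A$ (the Hausdorff limit of a decreasing sequence of convex bodies all containing a fixed body still contains that body — more directly, $\mathcal A' \supseteq \bigcap_k A^{j_k} \supseteq \mathcal A$ using monotonicity). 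For the reverse inclusion $\mathcal A' \subseteq \mathcal A$: fix any $m$; for all $j_k \ge m$ we have $A^{j_k} \subseteq A^m$, and since $A^m$ is closed, the Hausdorff limit satisfies $\mathcal A' \subseteq A^m$. As this holds for every $m$, we get $\mathcal A' \subseteq \bigcap_m A^m = \mathcal A$. Hence $\mathcal A' = \mathcal A$, and since the limit of every convergent subsequence is the same, the whole sequence converges: $d_H(A^j, \mathcal A) \to 0$.

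Alternatively, and perhaps more cleanly, one can avoid subsequences entirely by exploiting monotonicity directly: for a decreasing sequence, $d_H(A^j, \mathcal A) = \max_{x \in A^j} \dist(x, \mathcal A)$ since $\mathcal A \subseteq A^j$, and this quantity is non-increasing in $j$; if it did not tend to $0$, there would be points $x_j \in A^j$ with $\dist(x_j, \mathcal A) \ge \delta > 0$, and by compactness of $A^1$ a subsequence $x_{j_k} \to x_\infty$; then $x_\infty \in A^m$ for every $m$ (again using that each $A^m$ is closed and contains the tail $x_{j_k}$), so $x_\infty \in \mathcal A$, contradicting $\dist(x_\infty, \mathcal A) \ge \delta$. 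I would present whichever of these is shorter in context.

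I do not anticipate a real obstacle here — this is a soft compactness argument with no estimates. The only place to be slightly careful is making sure $\mathcal A$ is genuinely a \emph{body} (non-degenerate), which is where the role of the common lower bound $C$ enters; without such a $C$ the intersection could collapse to a lower-dimensional set or a point, and the statement (as a statement about convex \emph{bodies}) would fail. Everything else — convexity, compactness, and the monotone Hausdorff convergence — is routine.
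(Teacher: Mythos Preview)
Your proof is correct and follows essentially the same approach as the paper: both first observe that $\mathcal A$ is closed and convex with $C \subseteq \mathcal A$ guaranteeing non-empty interior, and then address Hausdorff convergence. The paper simply cites \cite[Lemma~1.8.2]{Sch} for the convergence, whereas you spell out the (standard) compactness argument; either of your two variants is a valid proof of what the paper quotes as a black box.
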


\begin{proof}
It is immediate that $\mathcal A$ is a closed convex set. Since each $A^j$ contains the convex body $C$, the interior of $\mathcal A$ is not empty. Hence, $\mathcal A$ is a convex body. The convergence follows by \cite[Lemma~1.8.2]{Sch}.
\end{proof}

Using the three lemmas above, we establish the following auxiliary result.

\begin{lemma}[Approximation descends to inner parallel bodies]
\label{Lem:Des}
If $A \subset \R^n$ is a convex body, and $(A^j)_{j=1}^\infty$ is a sequence of convex bodies such that 
\[
A^j \supset A^{j+1} \supset A  \quad \text{ and }\quad \lim_{j \to \infty} d_H\left(A, A^j \right) = 0, 
\]
then for the inner parallel bodies at distance $0 \le t < r(A)$, we have 
\[
\lim_{j \to \infty} d_H\left(A_t, A_t^j \right) = 0.
\]
\end{lemma}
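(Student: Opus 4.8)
The plan is to read off the conclusion from Lemma~\ref{Lem:HD3} after passing to suitable nested intersections. The case $t = 0$ is exactly the hypothesis $\lim_j d_H(A, A^j) = 0$, so we may assume $0 < t < r(A)$; then $A_t = A - t\B$ is a convex body, being closed, bounded, convex, and of nonempty interior (because $t < r(A)$). Since $A^j \supset A^{j+1} \supset A$, monotonicity of the inner parallel operation gives $A^j_t \supset A^{j+1}_t \supset A_t$ for every $j$. Thus $(A^j_t)_{j \ge 1}$ is a nested decreasing sequence of convex bodies all containing the fixed convex body $A_t$, so Lemma~\ref{Lem:HD3} applies: the set $\mathcal A := \bigcap_{j} A^j_t$ is a convex body and $d_H(A^j_t, \mathcal A) \to 0$ as $j \to \infty$. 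It therefore remains only to show $\mathcal A = A_t$.

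This identification is elementary. By definition of the inner parallel body, a point $x$ lies in $A^j_t$ for every $j$ exactly when $x + t\B \subset A^j$ for every $j$, i.e. when $x + t\B \subset \bigcap_j A^j$. Now $\bigcap_j A^j = A$: the inclusion $A \subset \bigcap_j A^j$ is immediate, while the reverse inclusion follows from $A^j \subset A + d_H(A, A^j)\B$ together with $d_H(A, A^j) \to 0$ and the closedness of $A$ (equivalently, apply Lemma~\ref{Lem:HD3} to $(A^j)_j$ and compare Hausdorff limits). Hence $x \in \mathcal A$ if and only if $x + t\B \subset A$, that is, if and only if $x \in A_t$, so $\mathcal A = A_t$ and the lemma follows. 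The only point that needs genuine care is this last identification -- one must check that the decreasing intersection of the $A^j_t$ is exactly $A_t$ and nothing larger -- together with the routine verification that $A_t$ and the $A^j_t$ are honest convex bodies so that Lemma~\ref{Lem:HD3} is applicable; neither presents a real difficulty, and there is no step I expect to be a serious obstacle.

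One can instead argue quantitatively, bringing in Lemmas~\ref{Lem:Mon1} and \ref{Lem:Mon2}. Writing $\mu_j := d_H(A, A^j) \to 0$, the inclusions $A \subset A^j \subset A + \mu_j\B$ and the Minkowski identity $(A + \mu_j\B) - t\B = A - (t - \mu_j)\B$ (valid once $\mu_j < t$) give the sandwich $A_t \subset A^j_t \subset A - (t - \mu_j)\B$, so $d_H(A_t, A^j_t) \le d_H(A_t,\, A - (t - \mu_j)\B)$. Since $(A - (t-\mu_j)\B) - \mu_j\B = A_t$, the right-hand side equals $T_{A - (t-\mu_j)\B}(\mu_j)$, which, by Lemma~\ref{Lem:Mon2} applied with the fixed body $A - t_0\B \subset A - (t-\mu_j)\B$ for a chosen $t_0 \in (t, r(A))$, is at most $T_{A - t_0\B}(\mu_j) + d_H(A - t_0\B,\, A - (t-\mu_j)\B)$; the first summand tends to $0$ by Lemma~\ref{Lem:Mon1} applied to the fixed convex body $A - t_0\B$, and the second is controlled by the continuity of $s \mapsto A - s\B$ near $s = t$ (from the left by Lemma~\ref{Lem:HD3}, from the right because $\dist(\cdot, \partial A)$ is concave on $A$), letting $t_0 \downarrow t$. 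This variant exhibits the rate of convergence explicitly but ultimately rests on the same continuity input as the first argument.
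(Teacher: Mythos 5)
Your first argument is correct, and it takes a genuinely shorter route than the paper's. The paper also applies Lemma~\ref{Lem:HD3} to the nested sequence $(A^j_t)_{j=1}^\infty$, but it never identifies the intersection $\mathcal A=\bigcap_j A^j_t$ with $A_t$; instead it first derives the bound $d_H(A_t,A^j_t)\le T_{A^j_t}(\eps_j)$ with $\eps_j=d_H(A,A^j)$ (from $A_t\supset A^j_t-\eps_j\B$ and the definition \eqref{def:T}), and then shows $T_{A^j_t}(\eps_j)\to 0$ by combining Lemma~\ref{Lem:Mon2} (giving $T_{A^j_t}(\eps_j)\le T_{\mathcal A}(\eps_j)+d_H(\mathcal A,A^j_t)$) with Lemmas~\ref{Lem:Mon1} and \ref{Lem:HD3}. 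Your route short-circuits this: the elementary identity $\bigcap_j\bigl(A^j-t\B\bigr)=\bigl(\bigcap_j A^j\bigr)-t\B=A-t\B$, together with $\bigcap_j A^j=A$ (which you correctly get from $A\subset A^j\subset A+d_H(A,A^j)\B$ and closedness of $A$), pins down the Hausdorff limit supplied by Lemma~\ref{Lem:HD3} as exactly $A_t$, so Lemmas~\ref{Lem:Mon1} and \ref{Lem:Mon2} are not needed for this lemma at all. What the paper's mechanism buys is an explicit modulus of convergence, $d_H(A_t,A^j_t)\le T_{A^j_t}(\eps_j)$, which is close in spirit to your second, quantitative variant; that variant is essentially sound (the cancellation $(A+\mu\B)-t\B=A-(t-\mu)\B$ for convex $A$ is standard), but its final step — two-sided continuity of $s\mapsto A-s\B$ at $s=t$ — is only sketched. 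Since your first argument is complete and self-contained, nothing hinges on that sketch.
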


\begin{proof}
Since $(A^j)_{j=1}^\infty$ is a nested sequence that approximates $A$, there is a monotone sequence $\eps_j \searrow 0$ such that 
\[
d_H(A, A^j) = \eps_j.
\]
Hence,
\[
A + \eps_j \B \supset A^j \ \  \Rightarrow \ \ A   \supset A^j - \eps_j \B \ \  \Rightarrow  \  \ A - t \B \supset A^j - \eps_j \B - t \B.
\]
In our notation, the last inclusion also reads as $A_t \supset A_t^j - \eps_j \B$. Since by \eqref{def:T}
\[
A_t^j - \eps_j \B + T_{A_t^j}(\eps_j) \B \supset A_t^j,
\]
we conclude that
\begin{equation}
\label{Eq:HD1}
A_t + T_{A_t^j}(\eps_j) \B \supset A_t^j.
\end{equation}

By assumption of the lemma, $A^j \supset A$, so  $A^j_t \supset A_t$. Hence, \eqref{Eq:HD1} implies that
\begin{equation*}
d_H\left(A_t, A_t^j\right) \le T_{A_t^j}(\eps_j).
\end{equation*}
It remains to show that $T_{A_t^j}(\eps_j) \to 0$ as $j \to \infty$.

Since the sequence $\left(A^j\right)_{j=1}^\infty$ is decreasing, so is the sequence $\left(A^j_t\right)_{j=1}^\infty$. Moreover, each $A^j_t$ contains $A_t$. The latter set is a (non-empty) convex body by our choice of $0 \le t < r(A)$. Hence, by Lemma~\ref{Lem:HD3},
\[
\mathcal A := \bigcap_{j = 1}^\infty A_t^j
\] 
is a convex body and is the Hausdorff limit of $A_t^j$ as $j \rightarrow \infty$. Since $A_t^j \supset \mathcal A$, Lemma~\ref{Lem:Mon2} guarantees that
\[
T_{A_t^j}(\eps_j) \le T_{\mathcal A}(\eps_j) + d_H(\mathcal A, A_t^j)
\]
 for each $j \ge 1$.
By Lemma~\ref{Lem:HD3},
\[
d_H(\mathcal A, A_t^j) \to 0 \quad \text{ as }\quad j \to \infty.
\]
Since $\eps_j \searrow 0$ as $j \to \infty$, Lemma~\ref{Lem:Mon1} guarantees that
\[
T_{\mathcal A}(\eps_j) \to 0 \quad \text{ as }\quad j \to \infty.
\]
Combining everything together, we get that $\lim\limits_{j \to \infty} T_{A_t^j} (\eps_j) = 0$. This concludes the proof.
\end{proof}

\section{Proof of the reverse isoperimetric inequality in $\R^3$ (Theorem~\ref{mainthm})}
\label{Sec:ProofA}

The proof breaks into two steps. In the first step, we  establish the theorem when $K$ is a $\lambda$-convex polytope. In the second step, we use approximation (Proposition~\ref{Prop:Approx}) and the result of the first step to prove the theorem in general: the reverse isoperimetric inequality will follow by continuity, and thus the only thing that we would have to deal with is the equality case.

\subsection{Step I: $K$ is a $\lambda$-convex polytope}

Let $K$ be a $\lambda$-convex polytope (see Definition~\ref{Def:LambdaConvexPol}). Without loss of generality, we set $\lambda = 1$. We can also assume that $K$ is not a lens (as for lenses the claim of the theorem is true). It is known (see, e.g., \cite{Ma}) that $\frac{d}{dt}|K_t| = |\partial K_t|$ at any $t \ge 0$, where $K_t$ is the inner parallel body of $K$ at distance $t$. Thus, 
$$
|K| = \int\limits_0^{r(K)} |\partial K_t| dt.
$$
Observe that $f_K \colon t \mapsto |\partial K_t|$ is a strictly decreasing, absolutely continuous function, and hence the integral makes sense (see \cite[p.\,439]{Sch} for the question about differentiability of such functions for general convex bodies). 

For the lens $L$, let $f_L \colon t \mapsto |\partial L_t|$ be an analogous function that measures the area of the inner parallel bodies of $L$. By assumption of the theorem, $f_K(0) = f_L(0)$. The following key proposition relates the values of $f_K$ and $f_L$ in a small right neighborhood of $0$.

\begin{proposition}[Areas of inner parallel bodies are smaller for lenses]
\label{prop_der}
$f_K(t) > f_L(t)$ for every sufficiently small $t > 0$.
\end{proposition}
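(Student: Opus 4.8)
The plan is to prove the stronger quantitative fact that $f_K$ and $f_L$ are both smooth on a right neighbourhood $[0,\eps)$ of $0$, agree at $t=0$ (which is the hypothesis $|\partial K|=|\partial L|$), and satisfy $f_K'(0^+)>f_L'(0^+)$; since $f_K(0)-f_L(0)=0$ and $(f_K-f_L)'(0^+)>0$, the proposition follows. The crucial geometric input is that the inner parallel body of a polytope is again a polytope of the same type: writing $K=\bigcap_i B(c_i,1)$ (we have set $\lambda=1$), one has $K_t=K-t\B=\bigcap_i B(c_i,1-t)$, so $K_t$ is a $\tfrac1{1-t}$-convex polytope whose combinatorial structure (set of facets, edges, vertices) is constant for $t$ in some initial interval $[0,\eps)$; hence $f_K(t)=|\partial K_t|$ depends smoothly (even real-analytically) on $t$ there. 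The same remarks apply to the lens $L$.

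\textbf{Step 1: a formula for $f_K'(0^+)$.} Rescaling the facet $F_i^t\subset\partial B(c_i,1-t)$ of $K_t$ to the unit sphere centred at $c_i$ turns it into the spherical region $\widetilde F_i^t=\bigcap_{j\sim i}\bigl\{u\in\Sphere:\langle u,\widehat w_{ij}\rangle\ge\tau_{ij}(t)\bigr\}$, where $\widehat w_{ij}$ is the unit vector from $c_i$ to $c_j$ and $\tau_{ij}(t)=|c_i-c_j|/\bigl(2(1-t)\bigr)$, and $|F_i^t|=(1-t)^2|\widetilde F_i^t|$. Each $\widetilde F_i^t$ is an intersection of (small) spherical caps, hence a topological disk, and increasing one $\tau_{ij}$ shrinks the $j$-th cap, removing a thin strip along the corresponding edge-arc $e_{ij}$ of width $d\tau_{ij}/\sqrt{1-\tau_{ij}^2}$ and length $\omega_{ij}(t)\sqrt{1-\tau_{ij}(t)^2}$, where $\omega_{ij}(t)\in(0,2\pi]$ is the angular extent of that arc; so $\partial_{\tau_{ij}}|\widetilde F_i^t|=-\omega_{ij}(t)$. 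Summing the chain rule over $i$ and $j$ (each edge $e$ of $K$ being counted once from each of its two facets), and using $\tau_{ij}'(0)=|c_i-c_j|/2$ together with $|F_i^0|=|\widetilde F_i^0|$, I obtain
\[
f_K'(0^+)\;=\;-2\,|\partial K|\;-\;\sum_{e} d_e\,\omega_e ,
\]
where the sum runs over the edges $e$ of $K$, $d_e=|c_i-c_j|$ for the two balls meeting along $e$, and $\omega_e=\omega_e(0)$ is the angular extent of the circular arc $e$. For $L$ the unique edge is a full circle ($\omega_e=2\pi$), there are no vertices, and $d_e$ is the distance $d_L$ between the centres, so $f_L'(0^+)=-2|\partial L|-2\pi d_L$.

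\textbf{Step 2: Gauss--Bonnet and the comparison.} Next I would apply Gauss--Bonnet to the polyhedral surface $\partial K\cong\Sphere$: its total Gaussian curvature is $4\pi$, concentrated as $|\partial K|$ on the (unit-curvature) facets, as $d_e\,\omega_e$ along each edge $e$ — a short computation of $\int_e(\kappa_g^{(i)}+\kappa_g^{(j)})\,ds$ using the local latitude-circle model — and as the angle defect $\delta_v=2\pi-\sum_{i\ni v}\gamma_{i,v}$ at each vertex $v$, where $\gamma_{i,v}$ is the angle of facet $F_i$ at $v$. Thus
\[
|\partial K|+\sum_e d_e\,\omega_e+\sum_v\delta_v=4\pi ,\qquad |\partial L|+2\pi d_L=4\pi .
\]
Subtracting these and using $|\partial K|=|\partial L|$ gives $\sum_e d_e\omega_e+\sum_v\delta_v=2\pi d_L$, hence
\[
f_K'(0^+)-f_L'(0^+)\;=\;-\sum_e d_e\omega_e+2\pi d_L\;=\;\sum_v\delta_v .
\]
Since the tangent cone at a vertex of a convex body is a pointed convex cone, the facet angles there sum to strictly less than $2\pi$, so $\delta_v>0$ for every vertex $v$ of $K$; and a $\lambda$-convex polytope that is not a lens has at least one vertex (if every edge were a full circle, two disjoint such circles on $\partial K\cong\Sphere$ would bound an annular facet, impossible as facets are disks, leaving at most one edge and two facets). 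Therefore $f_K'(0^+)-f_L'(0^+)=\sum_v\delta_v>0$, which, with $f_K(0)=f_L(0)$, yields $f_K(t)>f_L(t)$ for all sufficiently small $t>0$.

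\textbf{Main obstacle.} The real work is Step 1 — setting up the rescaled-facet description of $\partial K_t$ and justifying the first-variation computation. One must check that $K_t$ is exactly the intersection of the shrunken balls, that the combinatorial type is locally constant in $t$, and, most delicately, that differentiating the cap heights $\tau_{ij}$ captures the \emph{entire} variation of $|\widetilde F_i^t|$ including the motion of the vertices — which it does, because the strip removed when one cap shrinks has as its length the \emph{current} edge length, already reflecting where the neighbouring caps cut that edge. The Gauss--Bonnet bookkeeping of Step 2 is standard but needs the value $d_e\omega_e$ of the curvature concentrated along a curved edge to be verified, and the structural remark that a non-lens $\lambda$-convex polytope has a vertex should be stated precisely.
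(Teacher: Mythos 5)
Your proposal is correct, and its overall strategy is the same as the paper's: compare the one-sided derivatives of $t\mapsto|\partial K_t|$ and $t\mapsto|\partial L_t|$ at $t=0$, then use Gauss--Bonnet to show that the edge term is maximal exactly for the lens, the deficit being a sum of nonnegative vertex contributions that vanish only when $K$ has no vertices. What differs is the implementation of both computational steps, and your versions are equivalent to the paper's. For the derivative, you use $K_t=\bigcap_i B(c_i,1-t)$, rescale each facet to an intersection of spherical caps on the unit sphere and differentiate in the cap heights $\tau_{ij}$, getting $f_K'(0^+)=-2|\partial K|-\sum_e d_e\omega_e$; the paper instead radially projects each facet, decomposes the discrepancy into quadrilaterals $Q_{ij}=Q^e_{ij}\setminus\bigl(Q^v_{ij}\cup Q^{v'}_{ij}\bigr)$ and controls the corner pieces by the small-triangle lemma, arriving at $\dot f_K(0)=-2f_K(0)-2\sum l_{ij}\tan\frac{\gamma_{ij}}{2}$. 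These agree, since $d_e=2\sin\frac{\gamma_{ij}}{2}$ and $\omega_e=l_{ij}/\cos\frac{\gamma_{ij}}{2}$, so $d_e\omega_e=2l_{ij}\tan\frac{\gamma_{ij}}{2}$. For Gauss--Bonnet, the paper works with the spherical image (curvature measure), computing $\sigma\bigl(u(K,E_{ij})\bigr)=2l_{ij}\tan\frac{\gamma_{ij}}{2}$ in coordinates and leaving the vertex term as $\sigma\bigl(u(K,v)\bigr)$; you instead sum the classical Gauss--Bonnet formula over the (disk) facets and use angle defects $\delta_v$, which equals $\sigma\bigl(u(K,v)\bigr)$ by polarity of the tangent and normal cones in $\R^3$ (your bookkeeping also needs Euler's formula $V-E+F=2$, which you should make explicit, and the value $\kappa_g^{(i)}=\kappa_g^{(j)}=\tan\frac{\gamma_{ij}}{2}$ along an edge, which checks out). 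The two points you flag as delicate --- constancy of the combinatorial structure of $K_t$ for small $t$, hence validity of the first-order formula despite moving vertices, and strict positivity of the vertex term at every vertex (pointedness of the tangent cone) --- are treated at essentially the same level of detail in the paper (the latter is asserted there, too), so your write-up is on par; the paper's quadrilateral decomposition is precisely the careful version of your claim that vertex motion contributes only at order $t^2$. A small benefit of your route is that the formula $f_K'(0^+)=-2|\partial K|-\sum_e d_e\omega_e$ and the defect identity make the comparison with the lens transparent without trigonometric bookkeeping; the paper's route has the advantage of an explicit $O(t^2)$ error control and of staying entirely within the spherical-image formalism it needs anyway for the constraint \eqref{Gauss-Bonnet}.
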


For the proof of Proposition~\ref{prop_der}, we need the following auxiliary result which says that locally the area of a small spherical triangle behaves as the area of a Euclidean triangle.

\begin{lemma}[Area of small triangles]\label{approx_triangles}
	Let $\Delta(t) = a o b \subset \mathbb S^2$ be a spherical geodesic triangle such that $|o a| = |o b| = O(t)$ as $t \to 0$. Then
	\[
	|\Delta(t)| = O\left(t^2\right).
	\]
\end{lemma}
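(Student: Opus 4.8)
The statement is a purely local estimate on the sphere, so the plan is to reduce it to Euclidean geometry by working in a single coordinate chart near the vertex $o$. First I would place $o$ at a convenient point of $\mathbb S^2$ (say, the north pole) and use the exponential map $\exp_o \colon T_o\mathbb S^2 \to \mathbb S^2$, which is a diffeomorphism onto a hemisphere and has bounded Jacobian (bounded above and below) on any fixed compact neighborhood of the origin. Since $|oa| = O(t)$ and $|ob| = O(t)$, for $t$ small enough both $a$ and $b$ lie in such a fixed neighborhood, so their preimages $\tilde a := \exp_o^{-1}(a)$ and $\tilde b := \exp_o^{-1}(b)$ satisfy $|\tilde a|, |\tilde b| = O(t)$ in the flat metric on $T_o\mathbb S^2 \cong \R^2$.

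The key step is then a comparison of areas. The geodesic triangle $\Delta(t) = aob$ is contained in the geodesic ball $B(o, Ct)$ for a suitable constant $C$ (because geodesic convexity of this small ball forces the two sides $oa$, $ob$ and the third side $ab$ — which has length $O(t)$ by the triangle inequality — to stay within distance $O(t)$ of $o$). Hence $|\Delta(t)| \le |B(o,Ct)|$, and the area of a spherical cap of radius $Ct$ is $2\pi(1 - \cos Ct) = O(t^2)$ as $t \to 0$. This already gives the claimed bound $|\Delta(t)| = O(t^2)$; one does not even need the finer fact that the Jacobian of $\exp_o$ is close to $1$, only that the cap area is $O(t^2)$. (Alternatively, one could integrate the area form $\sin r \, dr \, d\theta$ in geodesic polar coordinates over the sector cut out by $\Delta(t)$ and bound it by $\int_0^{Ct}\!\int_0^{2\pi} r \, d\theta \, dr = \pi C^2 t^2$, using $\sin r \le r$.)

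There is essentially no hard part here; the only thing to be slightly careful about is making the constants uniform in $t$, i.e. checking that ``$|oa| = O(t)$'' really does confine the whole triangle, including the opposite side $ab$, to a ball of radius $O(t)$ so that the cap-area estimate applies. This follows from the triangle inequality $|ab| \le |oa| + |ob| = O(t)$ together with the fact that a geodesic triangle in $\mathbb S^2$ with all vertices in $B(o, \rho)$ and $\rho < \pi/2$ is contained in the (geodesically convex) ball $B(o,\rho)$ — or, even more crudely, every point of the side $ab$ is within $|ab|$ of $a$, hence within $|oa| + |ab| = O(t)$ of $o$. I would write the argument via this crude bound to avoid invoking convexity, then conclude with the cap computation.
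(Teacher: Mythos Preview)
Your proposal is correct and slightly different in flavor from the paper's argument. The paper also begins by noting (via the triangle inequality) that $\Delta(t)$ lies in a geodesic disk $U$ of radius $O(t)$ around $o$, but then uses normal coordinates and the expansion $g_{ij}(p) = \delta_{ij} + O(|op|^2)$ of the metric tensor to compare the spherical area of $\Delta(t)$ with the Euclidean area of its coordinate image; since the latter is a Euclidean triangle with two sides of length $O(t)$, it has area $O(t^2)$, and the metric correction is lower order. Your route is more elementary: you bound $\Delta(t)$ by the cap $B(o,Ct)$ and invoke the exact formula $|B(o,Ct)| = 2\pi(1-\cos Ct) = O(t^2)$ (or the equivalent polar integral with $\sin r \le r$), avoiding any Riemannian expansion. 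The paper's method in principle yields the finer statement $|\Delta(t)| = |\Delta(t)|_{Euc}\,(1+O(t^2))$, but only the crude $O(t^2)$ bound is used downstream, so your shortcut loses nothing for the application.
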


\begin{proof}
	By the triangle inequality, $\Delta(t)$ lies in a disk, say $U$, centered at $o$ of radius $O(t)$. 
	
	For the normal coordinates centered at $o$ in the neighborhood $U$, we have the following expansion of the metric tensor of the sphere (see, for example, \cite[Lemma 5.5.7]{Pet}):
	\[
	g_{ij}(p) = \delta_{ij} + O(|op|^2),
	\]  
	where $\delta_{ij}$ is the Kronecker delta.
	
	Therefore, computing the area of $\Delta$ in normal coordinates, we obtain:
	\[
	|\Delta(t)| = |\Delta(t)|_{Euc} + O\left(|\text{diam}(U)|^2\right)
	\]
	Here, $|\Delta(t)|_{Euc}$ is the area of $\Delta$ computed in the Euclidean metric (using the same coordinates). In our case, $|\Delta(t)|_{Euc} = O(t^2) $. The lemma follows.
\end{proof}

\begin{proof}[Proof of Propostion~\ref{prop_der}]
Our goal is to estimate $f_K(t) = |\partial K_t|$ for small positive $t$.

	We start with introducing some notation (see Figure~\ref{Fig:Terms}). Let $F_i$ be the $i$-th facet of $K$ and let $o_i$ be the center of the unit sphere containing $F_i$. Denote by $\beta_i$ the area of $F_i$, by $l_{ij}$ the length of the common edge $E_{ij}$ for a pair of adjacent facets $F_i$ and $F_j$, and by $\gamma_{ij}$ the angle between the unit spheres containing $F_i$ and $F_j$. 
	
\begin{figure}[h]
\begin{center}
\definecolor{ffxfqq}{rgb}{1.,0.4980392156862745,0.}
\definecolor{wqwqwq}{rgb}{0.3764705882352941,0.3764705882352941,0.3764705882352941}
\definecolor{ttzzqq}{rgb}{0.2,0.6,0.}
\begin{tikzpicture}[line cap=round,line join=round,>={Stealth[length=3mm, width=2mm]},x=1.2cm,y=1.2cm]
\clip(-6.,-3.3) rectangle (2.,3.8);
\fill[line width=1.pt,color=ttzzqq,fill=ttzzqq,fill opacity=0.20000000298023224] (-5.930527135815527,0.371267475207329) -- (-3.9997485018428773,-0.14973628316624066) -- (-1.8084679886834414,0.9995367132460453) -- (-1.0576096310274108,3.129522666596815) -- (-4.260250381029664,2.654489828079737) -- cycle;
\fill[line width=2.pt,color=ttzzqq,fill=ttzzqq,fill opacity=0.20000000298023224] (-1.8084679886834414,0.9995367132460453) -- (-0.1228675939454135,-0.5175036420181721) -- (1.6699982804577616,0.21803107568569086) -- (1.9458237995967116,1.0148603531982092) -- (1.593380080696942,2.240751549371314) -- (-1.0576096310274108,3.129522666596815) -- cycle;
\draw [shift={(-1.3797714896663118,2.2156349451517747)},line width=0.2pt,color=ffxfqq,fill=ffxfqq,fill opacity=0.30000001192092896] (0,0) -- (-119.51125452478152:0.7661819976081945) arc (-119.51125452478152:-88.75565205239613:0.7661819976081945) -- cycle;
\draw [line width=2.pt,color=ttzzqq] (-1.0576096310274108,3.129522666596815)-- (-1.8084679886834414,0.9995367132460453);
\draw [line width=0.8pt,dash pattern=on 1pt off 2pt] (-4.045719421699369,-2.494253195847304)-- (-1.0523681571246888,0.3190468648431712);
\draw [line width=0.8pt,dash pattern=on 1pt off 2pt,opacity=0.3] (-1.0523681571246888,0.3190468648431712)-- (0.0303688055762254,1.3366567921936492);
\draw [line width=0.8pt,dash pattern=on 1pt off 2pt] (-1.2721405903577052,-2.739431435081925)-- (-2.8316433446685347,0.46290628178533505);
\draw [line width=0.8pt,dash pattern=on 1pt off 2pt,opacity=0.3] (-2.8316433446685347,0.46290628178533505)-- (-3.2795374240911745,1.3826277120501407);
\draw [line width=1.3pt,dash pattern=on 1pt off 2pt,color=ffxfqq,opacity=0.5] (-2.1777512614008194,0.8058566751075621)-- (-1.3797714896663118,2.2156349451517743);
\draw [line width=1.5pt,dash pattern=on 1pt off 2pt,color=ffxfqq,opacity=0.5] (-1.3797714896663118,2.2156349451517743)-- (-1.3442816605494792,0.5817690179254812);
\draw [line width=1.5pt,color=ffxfqq] (-1.3442816605494792,0.5817690179254812)-- (-1.2721405903577052,-2.739431435081925);
\draw [line width=1.5pt,color=ffxfqq] (-2.1777512614008194,0.8058566751075621)-- (-4.045719421699369,-2.494253195847304);
\draw (-4.398163140599139,1.3060095122893216) node[anchor=north west] {$F_i$};
\draw (0.5513725639497976,0.8156530338200796) node[anchor=north west] {$F_j$};
\draw (-4.244926741077499,-2.5249004757516316) node[anchor=north west] {$o_j$};
\draw (-1.425376989879344,-2.739431435081925) node[anchor=north west] {$o_i$};
\draw [->,line width=0.4pt,color=ttzzqq] (0.12231064528920874,3.313406346022781) -- (-1.17,2.590015129752329);
\draw (-1.7471734288747858,1.4592459118109598) node[anchor=north west] {$\gamma_{ij}$};
\draw (0.22957612495435595,3.727144624731204) node[anchor=north west] {$E_{ij}$};
\draw (-1.701202509018294,2.55) node[anchor=north west] {$a$};
\begin{scriptsize}
\draw [fill=wqwqwq] (-3.2795374240911745,1.3826277120501407) circle (1.0pt);
\draw [fill=wqwqwq] (0.0303688055762254,1.3366567921936492) circle (1.0pt);
\draw [fill=black] (-1.3797714896663118,2.2156349451517743) circle (1.5pt);
\draw [fill=black] (-4.045719421699369,-2.494253195847304) circle (1.5pt);
\draw [fill=black] (-1.2721405903577052,-2.739431435081925) circle (1.5pt);
\end{scriptsize}
\end{tikzpicture}
\caption{The notation in the proof of Proposition~\ref{prop_der}: the facets $F_i$ and $F_j$ are adjacent with $E_{ij} = F_i \cap F_j$ being the common edge; $|F_i| = \beta_i$, $|F_j| = \beta_j$, $|E_{ij}| = l_{\ij}$. The unit spheres containing $F_i$, respectively $F_j$, are centered at $o_i$, respectively $o_j$, and $\gamma_{ij} = \angle o_i a o_j$ is the angle between $F_i$ and $F_j$, where $a \in E_{ij}$ is an arbitrary point.}
\label{Fig:Terms}
\end{center}
\end{figure}
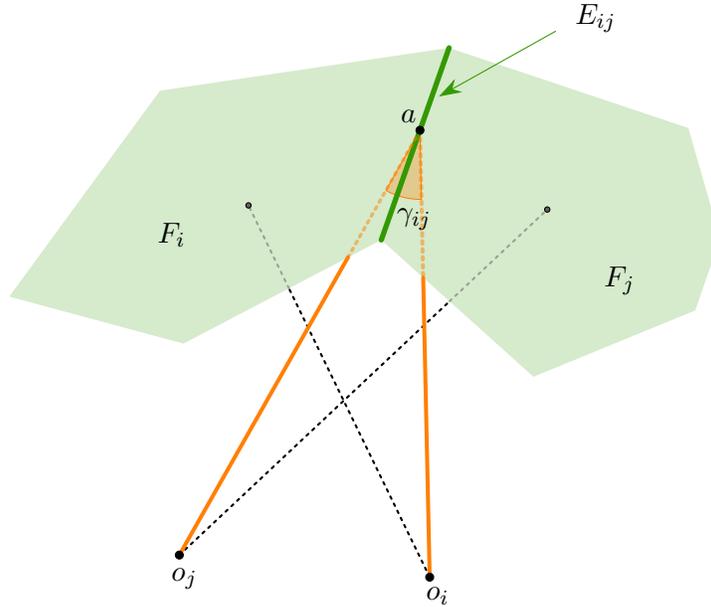
	
	In this notation, 
	\begin{equation*}
			f_K(0) = |\partial K| =  \sum\limits_{i} \,\beta_i.
	\end{equation*}
	We want to show that 
	\begin{align}\label{surf_area_Kt}
		f_K(t) &= \left(1-t\right)^2 \sum\limits_{i}  \,\beta_i - 2 t \sum\limits_{i, j} l_{ij} \tan{\frac{\gamma_{ij}}2}  + O\left(t^2\right).
	\end{align}
	Since $K$ is a $1$-convex polytope, it follows that $K_t$ is a $(1-t)^{-1}$-convex polytope. Observe that for sufficiently small $t$, the polytope $K_t$ has the same combinatorial structure as $K$; in particular, it has the same number of facets as $K$. Denote by $\widetilde{F_i}(t)$ the $i$-th facet of $K_t$ (corresponding to $F_i$), and by $\widetilde E_{ij} = \widetilde E_{ij}(t)$ the edge of $K_t$ corresponding to the edge $E_{ij}$ of $K$. Then $|\partial K_t| = \sum_{i} |\widetilde{F}_i(t)|$. Our goal is to compute $|\widetilde{F}_i(t)|$.

Define $\Pi_t$ to be the radial projection from the unit sphere centered at $o_i$ onto the concentric sphere of radius $1-t$. Denote $F_i(t) := \Pi_t(F_i)$ to be the projection of the $i$-th facet $F_i$. Note that $F_i = F_i(0) = \widetilde{F}_i(0)$. At the same time, for small positive $t$ we have $F_i(t) \supset \widetilde{F}_i(t)$. Therefore,
\[
|\widetilde{F}_i(t)| = |F_i(t)| - |F_i(t)\backslash \widetilde{F}_i(t)|.
\]
The difference $F_i(t)\backslash \widetilde{F}_i(t)$ is the disjoint union of spherical quadrilaterals $Q_{ij}(t)$, where $j$ ranges over indices of all adjacent to $F_i$ facets of $K$. For each $j$, the quadrilateral $Q_{ij}(t)$ is bounded by $\widetilde{E}_{ij}$, $\Pi_t(E_{ij})$ and a pair of arcs of big circles that join the respective endpoints of $\widetilde{E}_{ij}$ and $\Pi_t(E_{ij})$ (see Figure~\ref{Fig:SphZone}). We would like to emphasize that the endpoints of $\widetilde{E}_{ij}$ are vertices of $K_t$, while the endpoints of $\Pi_t(E_{ij})$ are the radial projections of the vertices of $K$. We claim that by construction, both sides $\widetilde E_{ij}$ and $\Pi_t(E_{ij})$ of $Q_{ij}(t)$ lie on parallel planes in $\mathbb R^3$. Indeed, if $o_j$ is the center of the unit sphere containing $F_j$, then the plane $\mathcal H_{ij} \subset \R^3$ that contains $E_{ij}$ is perpendicular to the Euclidean geodesic $o_i o_j$. The edge $\widetilde E_{ij}$ also lies in $\mathcal H_{ij}$ (as well as in the intersection of the two spheres of radius $1-t$ centered at $o_i$ and $o_j$). Finally, since the projection $\Pi_t$ is centered at $o_i$ and  $\Pi_t(E_{ij})$ lies in a plane parallel to $\mathcal H_{ij}$, this implies the claim above about the sides of the quadrilateral $Q_{ij}(t)$. 

We have 
\[
|F_i(t)\backslash \widetilde{F}_i(t)| = \sum_{j} |Q_{ij}(t)|,
\]
and hence our goal is to estimate each $|Q_{ij}(t)|$. For this, we decompose $Q_{ij}(t)$ into several sets, $Q_{ij}^e(t)$, $Q_{ij}^v(t)$ and $Q_{ij}^{v'}(t)$, defined as follows. The set $Q_{ij}^e(t)$ is the spherical quadrilateral that is bounded by $\Pi_t(E_{ij})$, by the extension of the circular arc $\widetilde{E}_{ij}$, and by a pair of arcs of the big circles passing through the endpoints of $\Pi_t(E_{ij})$ orthogonal to it (see Figure \ref{Fig:SphRec}). Note that $Q_{ij}^e(t)$ is not a geodesic quadrilateral, since it has only two sides that are geodesic segments. At the same time, each angle of $Q_{ij}^e(t)$ is equal to $\pi/2$. It follows that $Q_{ij}^e(t) \setminus Q_{ij}(t)$ is the disjoint union of two triangular regions, which we call $Q_{ij}^v(t)$ and $Q_{ij}^{v'}(t)$. Each of them is attached to an endpoint of $\Pi_t(E_{ij})$ denoted by $v$ and $v'$, respectively. Each such region has two sides that  are spherical geodesic segments and one angle which is equal to $\pi/2$. Therefore, $Q_{ij}^v(t)$ and $Q_{ij}^{v'}(t)$ are not geodesic triangles, but each of them is contained in a geodesic triangle; we denote these triangles $\widehat Q_{ij}^v(t)$ and $\widehat Q_{ij}^{v'}(t)$, respectively.

In terms of our decomposition, $|Q_{ij}(t)| = |Q_{ij}^e(t)| - |Q_{ij}^v(t)| - |Q_{ij}^{v'}(t)|$. Thus,

\begin{align}
	f_K(t) &=  \sum_{i} |F_i(t)| - \sum_{i}  |F_i(t)\backslash \widetilde{F}_i(t)| \nonumber \\
	&= \left(1-t\right)^2 \sum\limits_{i}  \,\beta_i - \sum_{i} \sum_{j}  \left(|Q_{ij}^e(t)| - |Q_{ij}^v(t)| - |Q_{ij}^{v'}(t)|\right).
\end{align}
 We show that the area of the triangular regions $Q_{ij}^v(t)$ and $Q_{ij}^{v'}(t)$ is of order at least $t^2$ for sufficiently small~$t$, and only $|Q_{ij}^e(t)|$  contributes to terms of order $t$ in \eqref{surf_area_Kt}.

\medskip

\begin{figure}[h]
\begin{center}		
\begin{tikzpicture}[>={Stealth[length=3mm, width=2mm]}]
\node[inner sep=0pt] (sph1) at (0,0)
    {\includegraphics[width=0.9\textwidth]{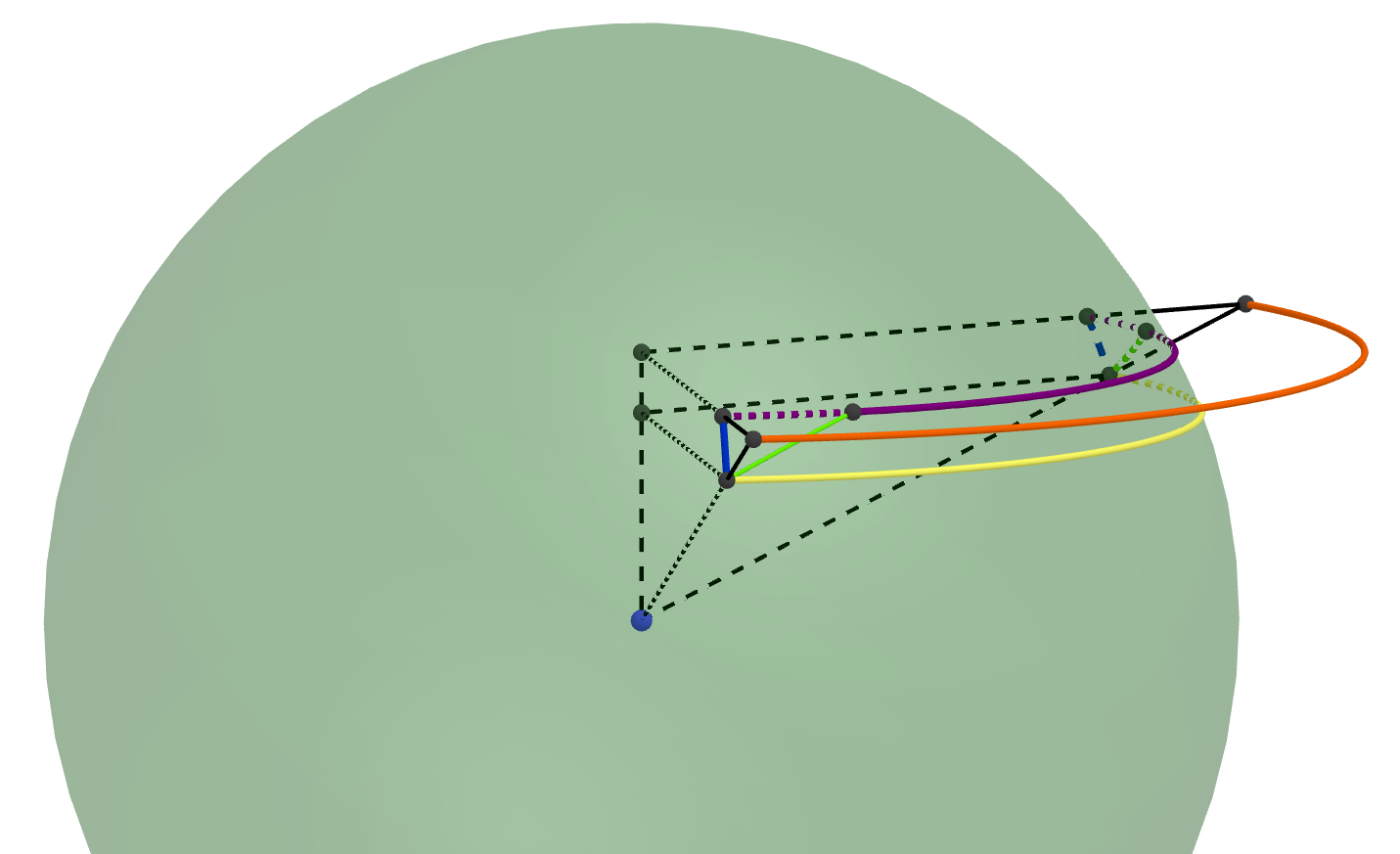}};
\draw (3.5,-0.7) node {$\Pi_t(E_{ij})$};
\draw (6,0) node {$E_{ij}$};
\draw (4.5,3.5) node {$\widetilde{E}_{ij}$};
\draw [->,line width=1.pt, color=violet] (4.4,3.2) -- (2.8,0.3);
\draw (-0.7,-2.2) node {$o_i$};
\end{tikzpicture}		
\caption{The quadrilateral $Q_{ij}(t)$ is bounded by the edge $\widetilde{E}_{ij}$ of $K_t$ (in violet), the radial projection $\Pi_t(E_{ij})$ (in yellow) of the edge $E_{ij}$ of $K$ (in orange), and by the pair of bright green arcs of big circles that connect the endpoints of $\widetilde{E}_{ij}$ and  $\Pi_t(E_{ij})$ (one of these arcs is dashed). The green sphere is the sphere of radius $1-t$ centered at $o_i$ and defined by the facet $F_i$.} 
\label{Fig:SphZone}
\end{center}
\end{figure}



{\it Estimation of $|Q_{ij}^v(t)|$ and  $|Q_{ij}^{v'}(t)|$.} We estimate $|\widehat Q^v_{ij}(t)|$ using Lemma~\ref{approx_triangles} and show that
\begin{equation}
\label{Eq:Ot2}
|\widehat Q^v_{ij}(t)| = O(t^2).
\end{equation}
Since $\widehat Q^v_{ij}(t) \supset Q^v_{ij}(t)$, we will get the same estimate for the area of this triangular region; an analogous estimate will hold true for the triangular region attached to $v'$.

Suppose $p, p'$ and $v$ are the vertices of $\widehat Q^v_{ij}(t)$ labeled so that $pv$ is orthogonal to $\Pi_t(E_{ij})$ (see Figure~\ref{Fig:SphRec}). Let $x(t)$ be the length of the spherical segment $pv$, $x'(t)$ be the length of the spherical segment $p'v$. From the construction it follows that 
\[
x(0) = x'(0) = 0,
\]
hence $x(t) = x'(t) = O(t)$. Hence, \eqref{Eq:Ot2} follows by Lemma~\ref{approx_triangles} applied to the triangle $p v p'$.

\medskip

{\it Estimation of $|Q_{ij}^e(t)|$.} Now we compute a non-zero contribution of $|Q_{ij}^e(t)|$ to the term of order $t$.

  Since $Q_{ij}^e(t)$ is a spherical quadrilateral bounded by two parallels of the sphere of radius $1-t$, 
  $$
  |Q_{ij}^e(t)| = \left(1 - t\right)\cdot \theta \cdot |o'_i h|,
  $$ 
  where $o_i'$ is the center of the Euclidean circle spanning $\widetilde E_{ij}$, $h$ is the center of the Euclidean circle spanning $\Pi_t(E_{ij})$, and $\theta = \angle v h v'$ is the angle that subtends $\Pi_t(E_{ij})$ (see Figure~\ref{Fig:SphRec}). We observe that  $|o'_i h|  = |o'_i o_i| - |o_i h|$. Let $q$ be the endpoint of $E_{ij}$ such that $\Pi_t(q) = v$. Since  $|o_iq| = 1$ and $\angle o_i q o'_i = {\gamma_{ij}}/{2}$, we get $|o_i o'_i| = \sin ({\gamma_{ij}}/{2})$. Using similarity of the Euclidean triangles $\Delta o_i q o'_i$ and $\Delta o_i v h$, we obtain $|o_i h|  = (1-t) \sin ({\gamma_{ij}}/{2})$. Thus, 
\[
  |o'_i h|  = t \sin\frac{\gamma_{ij}}{2} \qquad \textup{and} \qquad |vh| = (1-t) \cos \frac{\gamma_{ij}}2.
\] 
Recall that the length of $E_{ij}$ is equal to $l_{ij}$; by homothety, the length of $\Pi_t(E_{ij})$ is equal to $(1-t) l_{ij}$. Therefore,
 \begin{align*}
 	\theta = \frac{(1-t) l_{ij}}{|vh|} = \frac{l_{ij}}{\cos \frac{\gamma_{ij}}2}.
 \end{align*}
Hence,
 \begin{align*}
 	|Q^e_{ij}(t)| = (1-t) \frac{l_{ij}}{\cos \frac{\gamma_{ij}}2} \cdot t \sin\frac{\gamma_{ij}}{2} =  t \cdot l_{ij} \tan{\frac{\gamma_{ij}}2}   - t^2 \cdot l_{ij} \tan{\frac{\gamma_{ij}}2},
 \end{align*}
which confirms expansion \eqref{surf_area_Kt}. From this expansion it follows that the right derivative $\dot f_K(0)$ of $f_K(t)$ at $t=0$ has the form
	\begin{equation} \label{deriv}
			\dot f_K(0) = - 2\left(\sum\limits_{i} \beta_i + \sum\limits_{i,j} l_{ij} \tan{\frac{\gamma_{ij}}2} \right) = -2 f_K(0) - 2 \sum\limits_{i,j} l_{ij} \tan{\frac{\gamma_{ij}}2}.
	\end{equation}
Observe that \eqref{deriv} similarly holds for the lens $L$.

We want to show that the quantity in \eqref{deriv} is minimized for the lens $L$. Since $ f_K(0) = f_L(0)$, it is enough to show that the term $\sum_{i,j} \tan \frac{\gamma_{ij}}{2} l_{ij}$ is maximal for the lens $L$
	under the constraint coming from the Gauss--Bonnet theorem in $\R^3$:
	\begin{equation}\label{Gauss-Bonnet}
		\int\limits_{\partial K} \kappa(z) \, \text{d}\sigma_K(z) = 4\pi.
	\end{equation} 
Here $\kappa(z)$ denotes the Gaussian curvature of $\partial K$ at the point $z$ and the integration is done with respect to the area measure on the boundary of $K$.


 \begin{figure}[h]
\begin{center}		
\begin{tikzpicture}[>={Stealth[length=3mm, width=2mm]}]
\node[inner sep=0pt] (sph1) at (0,0)
    {\includegraphics[width=0.9\textwidth]{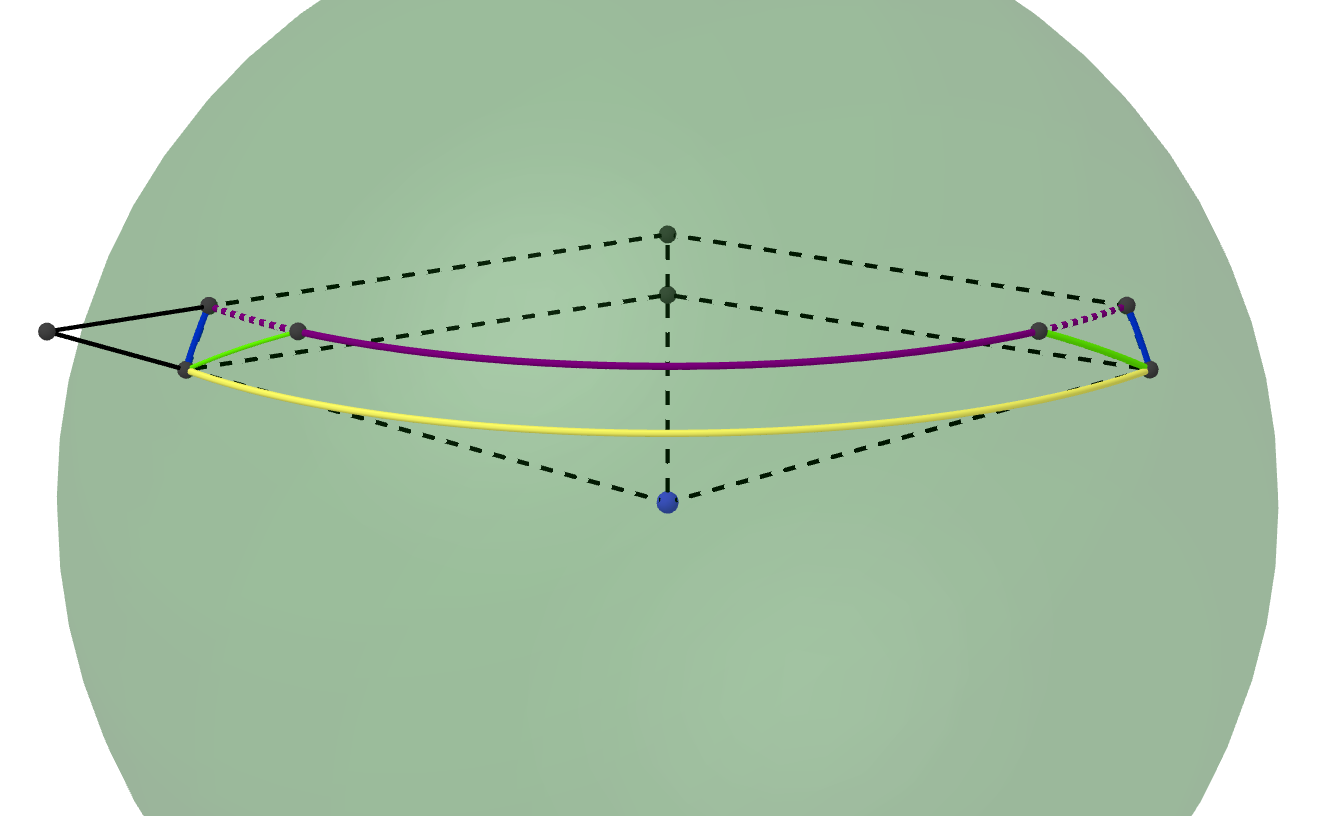}};
\draw (3.5,-2.6) node {$\Pi_t(E_{ij})$};
\draw [->,line width=1.pt, color=yellow] (3.5,-2.2) -- (0.9,-0.4);
\draw (4.9,3.5) node {$\widetilde{E}_{ij}$};
\draw [->,line width=1.pt, color=violet] (4.8,3.2) -- (1,0.6);
\draw (-0.3,-1.2) node {$o_i$};
\draw (-0.3,2.1) node {$o_i'$};
\draw (-0.3,0.8) node {$h$};
\draw (-5,0.1) node {$v$};
\draw (5.1,0.1) node {$v'$};
\draw (-3.4,1.0) node {$p'$};
\draw (-4.7,1.3) node {$p$};
\draw (-6.5,0.4) node {$q$};
\end{tikzpicture}		
\caption{The quadrilateral $Q_{ij}^e(t)$  is bounded by the extension of $\widetilde{E}_{ij}$ (in violet and violet, dashed), $\Pi_t(E_{ij})$ (in yellow) and a pair of arcs of the big circles passing through the endpoints of $\Pi_t(E_{ij})$ orthogonal to it (in blue). The triangular region $Q_{ij}^v(t)$ is bounded by the geodesic segments $pv$ (blue), $p'v$ (bright green), and the arc $pp'$ (violet, dashed) of a small circle centered at $o_i'$ that contains $\widetilde{E}_{ij}$.} 
\label{Fig:SphRec}
\end{center}
\end{figure}


Since $\partial K$ is not smooth, this constraint in the case of $\lambda$-convex polytopes should be understood as follows. Recall that, for a point $z \in \partial K$, the \emph{normal cone} of $K$ at $z$ is a subset of $\R^3$ consisting of $0$ and all normal vectors to all possible supporting planes at $z$ for $K$. We denote this cone by $N(K,z)$; this is a closed convex set (see \cite[p.\,81]{Sch}). For a subset $\omega \subset \partial K$, we denote by $u(K, \omega)$ the \emph{spherical image} of $\omega$, that is,
$$
u(K, \omega) = \underset{z \in \omega}{\bigcup} N(K, x) \cap S^{2},
$$
where $S^2 \subset \R^3$ is the unit sphere centered at the origin (see \cite[p.\,88]{Sch}). Let us denote by $\sigma$ the area measure on $S^2$. Finally, if $\mathcal{V}$, $\mathcal{E}$ and $\mathfrak{F}$ denote the sets of vertices, edges and facets of the polytope $K$, then the Gauss--Bonnet theorem for $K$ reads as follows:
	\begin{align}\label{GB}
		\int\limits_{\partial K} \kappa(z) \text{d}\sigma_K(z) = \sum\limits_{F \in \mathfrak{F}}\int\limits_{F} \kappa(z) \text{d}\sigma_K(z) + \sum\limits_{e \in \mathcal{E}} \sigma\left(u(K,e)\right)  + \sum\limits_{v \in \mathcal{V}} \sigma\left(u(K,v)\right) = 4\pi.		
	\end{align} 
\medskip

	To finish the proof, we note that the Gaussian curvature $\kappa$ on each $F \in \mathfrak{F}$ is constant and is equal to $1$. Introduce the standard coordinates $(\theta, \phi)$ on $S^2$ such that $\text{d} \sigma = \cos \theta \, \text{d} \theta \text{d} \phi$. These coordinates can be further rotated so that the spherical image of the edge $E_{ij} \in \mathcal E$ is the piece of a strip in $S^2$ given by $-{\gamma_{ij}}/2 \le \theta \le {\gamma_{ij}}/2, \ 0 \le \varphi \le {l_{ij}}/\cos({\gamma_{ij}}/{2})$ in coordinates $(\theta, \phi)$. Therefore,
	\begin{align*}
		 \sigma\left(u(K,E_{ij})\right) =  \int\limits_{0}^{\frac{l_{ij}}{ \cos{\frac{\gamma_{ij}}2}}}\int\limits_{-\frac{\gamma_{ij}}2}^{\frac{\gamma_{ij}}2} \cos \theta \, \text{d} \theta \text{d} \phi = \frac{l_{ij}}{\cos{\frac{\gamma_{ij}}2}} \cdot 2 \sin{\frac{\gamma_{ij}}2} = 2 l_{ij} \tan{\frac{\gamma_{ij}}2}.
	\end{align*}
	Thus, using \eqref{GB}, 
	\begin{align*}
		\sum_{i} \beta_i + 2 \sum\limits_{i,j} l_{ij} \tan{\frac{\gamma_{ij}}2}  + \sum\limits_{v \in \mathcal{V}} \sigma\left(u(K,v)\right) = 4 \pi;\\
		2 \sum\limits_{i,j}  l_{ij} \tan{\frac{\gamma_{ij}}2} = 4 \pi - \sum_{i} \beta_i	  - \sum\limits_{v \in \mathcal{V}} \sigma\left(u(K,v)\right).
	\end{align*} 
	As before, $\sum_{i} \beta_i$ is the same for $K$ and for the lens $L$. On the other hand, $\sum_{v \in \mathcal{V}} \sigma\left(u(K,v)\right)$ is always positive and equals to zero if and only if $K$ has no vertices. The only finite intersection of balls without vertices is the lens $L$. Therefore, $\sum_{i,j} l_{ij} \tan{\frac{\gamma_{ij}}2} $ is maximal for the lens $L$. This finishes the  proof of Proposition~\ref{prop_der}.
\end{proof}

Now we finish the proof of Theorem~\ref{mainthm} in Step I.

By Proposition \ref{prop_der}, $f_K(t) > f_L(t)$ in an open right neighborhood of $t = 0$. We claim that the latter implies that $f_K(t) \ge f_L(t)$ for any $t\in (0, r(K))$, and there are open intervals where this inequality is strict. If $f_K(t) > f_L(t)$ for every $t \in (0, r(K))$, then we are done. Otherwise, there exists $t_0 \in (0, r(K))$ such that $f_K(t_0) = f_L(t_0)$, i.e., $|\partial K_{t_0}| = |\partial L_{t_0}|$. Pick the smallest $t_0$ with such property. With this choice, $f_K(t) > f_L(t)$ for each $t \in (0,t_0)$. Observe that $K_{t_0}$ cannot be a lens: otherwise, since a lens is uniquely defined by its surface area, we would get that the lenses $K_{t_0}$ and $L_{t_0}$ are equal, and thus at the starting moment we must have had $|\partial K| < |\partial L|$, which is impossible. But since $K_{t_0}$ is not a lens, we can repeat the argument of Proposition \ref{prop_der} and show that $f_K(t) > f_L(t)$ in an open right neighborhood of $t_0$. Then we can continue inductively and the claim follows. 

We have shown that $|\partial K_t| \ge |\partial L_t|$ for any $t \ge 0$, and  this inequality is strict on some open intervals of values of $t$ provided $K$ is not a lens. Therefore, 
$$
|K| = \int\limits_0^{r(K)} |\partial K_t| dt > \int\limits_0^{r(L)} |\partial L_t| dt = |L|.
$$
This finishes the proof in Step I.

\begin{remark}
Note that the above implies that $r(K) \ge r(L)$. Thus, approximating $\lambda$-convex bodies with $\lambda$-convex polytopes, we obtain the following result: among all $\lambda$-convex bodies of given surface area in $\mathbb{R}^3$, the lens has the smallest inradius. This is a partial case of the Theorem~\ref{Thm:MainB}, which we prove in Section~\ref{Sec:ProofB} by using different methods.   
\end{remark}

\subsection{Step II: General case}

Assume that $K$ is not a $\lambda$-convex polytope. By Proposition~\ref{Prop:Approx}, we can find a nested sequence $(K^j)_{j=1}^\infty$ of $\lambda$-convex polytopes such that
\[
K \subset K^j  \quad \text{ and } \quad \lim_{j \to \infty} d_H(K, K^j) = 0.   
\]
Since $K$ is not a polytope, we can assume that none of the approximating polytopes $K^j$ is a lens. By continuity of the volume functionals under Hausdorff limits for convex bodies,
\[
\lim_{j \to \infty} |K^j| = |K| \quad \text{ and }\quad \lim_{j \to \infty} |\partial K^j| = |\partial K|.
\]
By Step I, if $L^j$ is a $\lambda$-convex lens such that $|\partial K^j| = |\partial L^j|$, then
\begin{equation}
\label{Eq:Eq}
|K^j| > |L^j|.
\end{equation}
The Hausdorff limit of $(L^j)_{j=1}^\infty$ is the lens $L$ with the surface area equal to $|\partial K|$. Thus, the required inequality between the $n$-dimensional volumes of $K$ and $L$ follows from \eqref{Eq:Eq} by passing to the limit. To study the equality case, it is enough to show that we must have strict inequality in the limit. 

For each $j \ge 1$, consider the graph of the function $f_{K^j}(t) = |\partial K^j_t|$, $t < r(K)$. By Lemma~\ref{Lem:Des}, 
\[
\lim_{j \to \infty} d_H(K^j_t, K_t) = 0.
\]
Since
\[
\lim_{j \to \infty} |\partial K_t^j| = |\partial K_t|
\]
by continuity of volumes,  the graph of the function $t \mapsto f_{K^j}(t)$ converges pointwise to the graph of the function $t \mapsto f_K(t) = |\partial K_t|$. The same is true for the functions $f_{L^j}$ and $f_{L}$ associated to lenses. From Step I, $f_{K^j}(t) > f_{L^j}(t)$ unless $t = 0$ (in which case we have equality by assumption). Hence, 
\begin{equation}
\label{Eq:EqEq}
f_{K}(t) \ge f_{L}(t)
\end{equation}
for all $t$ in the common domain of definition.

The graph of $f_K$ intersects the $t$-axis only at $r(K)$. Since $K$ is not a $\lambda$-convex polytope, Proposition~\ref{Prop:Reduction} ensures that there exists a $\lambda$-convex polytope $K''$ so that $|\partial K| = |\partial K''|$ and $r(K) > r(K'')$. Since $|\partial K| = |\partial L|$, we have $|\partial L| = |\partial K''|$, and thus $r(K'') \ge r(L)$ by the remark at the end of Step I. Combining this all together, we conclude that $r(K)$ must be strictly larger than $r(L)$. Therefore, the graphs $f_K$ and $f_L$ cannot coincide. Because of \eqref{Eq:EqEq}, this means that the area under $f_K$, i.e., $|K|$, must be strictly larger than the area under $f_L$, i.e.,  $|L|$. We showed that $|K| > |L|$, and this concludes the proof of the equality case and the proof of Theorem~\ref{mainthm}.

\section{Proof of the reverse inradius inequality (Theorem~\ref{Thm:MainB})}
\label{Sec:ProofB}

In this section we work in model spaces $M^n(c)$. In each model space, the proof of Theorem~\ref{Thm:MainB} follows the same lines, except for the equidistant case in the hyperbolic space, where an additional argument is required. We describe this argument in the next subsection, and after that reduce Theorem~\ref{Thm:MainB} to an equivalent statement.

\subsection{Reduction to an equivalent statement (Theorem~\ref{Thm:MainC})}

Assume $c < 0$ and $0 < \lambda < \sqrt{-c}$, i.e., we are in the equidistant case (see \eqref{It:Horo}, Section~\ref{SSec:Facts}). Since a $\lambda$-convex lens $L$ is compact by definition, we must have $r(L) < \rr_\lambda$, where $\rr_\lambda$ is the characteristic distance in the equidistant case. If $r(K) \ge \rr_\lambda$, then $r(K) \ge r(L)$ and  the claim of the theorem follows. 
	
Therefore, in the rest of this section we consider $\lambda$-convex bodies $K \subset M^n(c), \ c\in \R$ such that
\begin{equation}
\label{Eq:AssEquidistant}
\text{either $c \ge 0$; \ \  or $c < 0$ and $\lambda \ge \sqrt{-c}$;\ \  or $c<0$ and $\lambda \in (0, \sqrt{-c})$ with $r(K) < \rr_\lambda.$}
\end{equation}

Observe that for every $n \ge 2$, every $c \in \R$ and $\lambda$-convex body $K \subset M^n(c)$ as in~\eqref{Eq:AssEquidistant}, there exists a $\lambda$-convex lens $L \subset M^n(c)$ such that $r(K) = r(L)$ (and it is unique up to rigid motions in the space).

\begin{MainTheorem}
\label{Thm:MainC}
Let $c \in \R$ and $K \subset M^n(c)$ be a $\lambda$-convex body as in \eqref{Eq:AssEquidistant}. If $L \subset M^n(c)$ is the $\lambda$-convex lens such that $r(L) = r(K)$, then $|\partial L| \ge |\partial K|$. Moreover, equality holds if and only if $K$ is a $\lambda$-convex lens. 
\end{MainTheorem}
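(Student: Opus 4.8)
\textbf{Proof proposal for Theorem~\ref{Thm:MainC}.} The plan is to mimic the structure of the proof of Theorem~\ref{mainthm}: first reduce to $\lambda$-convex polytopes, then handle the general case by approximation. The reduction to polytopes is exactly Proposition~\ref{Prop:Reduction}: if $K$ is a $\lambda$-convex body that is not a polytope, then there is a $\lambda$-convex polytope $K''$ with $r(K'') < r(K) = r(L)$ and $|\partial K''| = |\partial K|$; taking the lens $L''$ with $r(L'') = r(K'')$ and knowing (from the polytope case, plus monotonicity of $|\partial L|$ in $r(L)$) that $|\partial L''| \ge |\partial K''| = |\partial K| = |\partial L|$, one would get $r(L'') \ge r(L)$, contradicting $r(L'')=r(K'')<r(L)$ unless the inequality $|\partial L| \ge |\partial K|$ is strict — which is what is needed to rule out equality for non-polytopes. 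So the entire content is the polytope case: \emph{among $\lambda$-convex polytopes of fixed inradius $r$, the lens maximizes surface area.}

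For the polytope case I would run a monotone deformation argument analogous to the one sketched in the remark after Proposition~\ref{Prop:Reduction} (and implicitly used in Step~I of Theorem~\ref{mainthm}). Fix a $\lambda$-convex polytope $K$ that is not a lens, with inscribed ball $B$ of radius $r$ centered at $o$, touching $\partial K$ along a set $\partial B \cap \partial K$ which, by Proposition~\ref{Prop:Properties}\eqref{It:PartI}, is not contained in an open half-space through $o$. By Proposition~\ref{Prop:Properties}\eqref{It:PartII}, replacing the facets of $K$ by the supporting convex sets $\Bb_\lambda(p)$ through the finitely many facet-tangency points $p$ only (discarding all other facets), one obtains a polytope $K' \supseteq K$ with the same inscribed ball, hence $r(K')=r$ and $|\partial K'| \ge |\partial K|$, with equality iff $K=K'$. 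So it suffices to treat polytopes \emph{all of whose facets touch the inscribed ball}. For such a polytope, write the surface area as a function of the "radial configuration" of tangency points on $\partial B$ together with the curvature data; the key computation is the first variation of $|\partial K|$ under the one-parameter family $K(s)$, $0 \le s \le r$, obtained by shrinking $B$ to a concentric ball $B'$ of radius $s$ and taking the intersection of the correspondingly-translated supporting sets $\Bb_\lambda(p')$ through the radially-projected tangency points $p'$ (exactly the construction $K''(s)$ in the proof of Proposition~\ref{Prop:Reduction}). One wants to show $|\partial K(s)|$ is \emph{strictly increasing in the total dihedral defect}, or more directly that for fixed $r$ the surface area strictly increases as the number of facets decreases toward two, with the lens as the unique maximizer.

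Concretely, for a $\lambda$-convex polytope circumscribed about a ball of radius $r$, I expect a formula of the shape $|\partial K| = \sum_i (\text{area of spherical-type facet over its contact cap}) + (\text{correction along edges bounded by a multiple of }\sum_{\text{edges}} |E_{ij}| \cdot g(\gamma_{ij}))$, where $g$ is an explicit increasing function of the edge dihedral angle, computed in model-space trigonometry just as $\tan(\gamma_{ij}/2)$ appeared in~\eqref{surf_area_Kt}; combined with a Gauss–Bonnet–type constraint (in $\R^3$ this is~\eqref{GB}; in general $M^n(c)$ one uses the analogue controlling total curvature, or in arbitrary dimension a combinatorial/angle identity for ball-polyhedra), the presence of any face of codimension $\ge 2$ forces a strictly positive "vertex term" that must be subtracted, so $|\partial K|$ is strictly less than the value achieved when there are no such faces, i.e.\ for the lens. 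The equality analysis then says: the only $\lambda$-convex polytope circumscribed about $B$ with no lower-dimensional faces is the lens $L$ with $r(L)=r$. Passing back through Proposition~\ref{Prop:Reduction} and the approximation Proposition~\ref{Prop:Approx} (with Lemma~\ref{Lem:Des} in the Euclidean case, and the appropriate Hausdorff-continuity of inradius and surface area in $M^n(c)$) closes the general case.

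\textbf{Main obstacle.} The hard part is the polytope inequality in \emph{arbitrary} dimension and \emph{arbitrary} constant curvature simultaneously: the clean $\R^3$ bookkeeping (quadrilaterals $Q_{ij}^e$, triangular corrections of order $t^2$, the two-dimensional Gauss–Bonnet formula~\eqref{GB}) does not transcribe verbatim. For $n \ge 4$ one loses the curvature-integral constraint and must instead find the right substitute — presumably an identity expressing $|\partial K|$ of a ball-polyhedron circumscribed about $B$ as a fixed quantity (depending only on $r$, $\lambda$, $c$, $n$) minus a sum of manifestly non-negative contributions indexed by the faces of codimension $\ge 2$, vanishing exactly for the lens. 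Getting this decomposition, uniformly over $M^n(c)$ and with the correct monotonicity in $r$ of $|\partial L(r)|$ (needed to run the Proposition~\ref{Prop:Reduction} contradiction), is where the real work lies; the equidistant hyperbolic subcase has already been quarantined by the hypothesis $r(K) < \rr_\lambda$ in~\eqref{Eq:AssEquidistant} and the compactness Lemmas~\ref{Lem:Cpt1}–\ref{Lem:Cpt2}, so no separate difficulty arises there beyond ensuring all the intersections in the deformation remain compact.
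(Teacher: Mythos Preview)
Your preliminary reductions are right and match the paper: Proposition~\ref{Prop:Reduction} handles non-polytopes, and your Claim~\ref{Claim2}-type step (discard facets not touching the inscribed ball) is exactly what the paper does first. Where you diverge is in the core inequality for polytopes circumscribed about a fixed ball $B$. You propose to replay the $\R^3$ machinery of Theorem~\ref{mainthm}---inner parallel bodies, edge contributions $l_{ij}\tan(\gamma_{ij}/2)$, and a Gauss--Bonnet identity that isolates a nonnegative vertex term---and you correctly flag that this does not transcribe to $n\ge 4$ or to $M^n(c)$. That obstacle is real, and the paper does \emph{not} overcome it along those lines; it uses a completely different argument.

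The paper's key idea is a \emph{radial projection} $\Pi\colon M^n(c)\setminus\{o\}\to\partial B$ from the incenter. Writing $\tF_i=\Pi(F_i)$, the lens facet $F$ (the ``natural radial extension'' of any $F_i$) projects to a half-sphere $\tF$, and by a simple symmetry argument the $\tF_i$ tile $\partial B$ exactly: $\sum_i|\tF_i|=2|\tF|$. The heart of the proof is then the pointwise ratio inequality
\[
\frac{|F_i|}{|\tF_i|}\ \le\ \frac{|F|}{|\tF|},
\]
with equality only when $F_i=F$. This follows because the Jacobian of $\Pi^{-1}$ restricted to $\Ss_i$ depends only on the angular distance $t$ from the tangency point $p_i$, is strictly increasing in $t$, and the half-sphere $\tF$ corresponds to the full range $t\in[0,\pi/2]$; an elementary lemma about strictly increasing functions of zero mean (their running integral is nonpositive) gives the ratio bound on conical sectors, and approximating $F_i$ by such sectors finishes it. Summing over facets and using the tiling identity yields $|\partial K|\le 2|F|=|\partial L|$.

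So your proposal has the right skeleton but is missing the decisive idea. The Gauss--Bonnet/dihedral-defect route you sketch is genuinely blocked in higher dimensions, whereas the radial-projection ratio argument works uniformly in all $M^n(c)$ and all $n\ge 2$ with essentially one-variable calculus---no curvature-integral constraint, no combinatorics of lower-dimensional faces, and no dimension-specific bookkeeping.
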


In fact, we have the following:

\begin{claim}
\label{Claim:Equivalent}
For a $\lambda$-convex body $K \subset M^n(c)$ and $c \in \R$ as in \eqref{Eq:AssEquidistant}, Theorems~\ref{Thm:MainB} and \ref{Thm:MainC} are equivalent.
\end{claim}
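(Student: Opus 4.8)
The plan is to prove the equivalence of Theorems~\ref{Thm:MainB} and \ref{Thm:MainC} by a monotonicity argument that exchanges the roles of the surface-area constraint and the inradius constraint. Both directions rely on the key structural observation already recorded above: for a fixed ambient space $M^n(c)$ and a fixed $\lambda > 0$, the family of $\lambda$-convex lenses is a one-parameter family that can be parametrized \emph{either} by inradius \emph{or} by surface area, and along this family both $r(L)$ and $|\partial L|$ are continuous and strictly monotone (increasing) functions of the parameter, ranging over $(0, \rr_\lambda)$ and $(0, |\partial \Bb_\lambda|)$ respectively (in the equidistant hyperbolic case; over the full ranges otherwise). Consequently the map $r(L) \mapsto |\partial L|$ is a continuous strictly increasing bijection between the admissible ranges, and in particular: two lenses with the same inradius have the same surface area, and two lenses with the same surface area have the same inradius.

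First I would show that Theorem~\ref{Thm:MainC} implies Theorem~\ref{Thm:MainB}. Let $K$ be $\lambda$-convex as in~\eqref{Eq:AssEquidistant}, and let $L$ be the $\lambda$-convex lens with $|\partial L| = |\partial K|$. Let $L'$ be the $\lambda$-convex lens with $r(L') = r(K)$ (which exists and is unique up to isometry by the discussion preceding Theorem~\ref{Thm:MainC}). Applying Theorem~\ref{Thm:MainC} to $K$ gives $|\partial L'| \ge |\partial K| = |\partial L|$, with equality iff $K$ is a lens. By monotonicity of surface area along the lens family, $|\partial L'| \ge |\partial L|$ forces $r(L') \ge r(L)$; since $r(L') = r(K)$, this yields $r(K) \ge r(L)$. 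If equality $r(K) = r(L)$ holds, then $r(L') = r(L)$, hence $L' = L$ (up to isometry), hence $|\partial L'| = |\partial L| = |\partial K|$, and the equality clause of Theorem~\ref{Thm:MainC} forces $K$ to be a lens; conversely if $K$ is a lens with $|\partial K| = |\partial L|$ then $K = L$ up to isometry and $r(K) = r(L)$. This settles the first implication.

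Conversely, I would show Theorem~\ref{Thm:MainB} implies Theorem~\ref{Thm:MainC}. Let $K$ be $\lambda$-convex as in~\eqref{Eq:AssEquidistant} and let $L$ be the lens with $r(L) = r(K)$; let $L''$ be the lens with $|\partial L''| = |\partial K|$. Applying Theorem~\ref{Thm:MainB} to $K$ gives $r(K) \ge r(L'')$, with equality iff $K$ is a lens. Since $r(L) = r(K) \ge r(L'')$, monotonicity of inradius along the lens family gives $|\partial L| \ge |\partial L''| = |\partial K|$, which is the inequality of Theorem~\ref{Thm:MainC}. For the equality case, $|\partial L| = |\partial K|$ forces $|\partial L| = |\partial L''|$, hence $L = L''$ up to isometry, hence $r(L'') = r(L) = r(K)$, so the equality clause of Theorem~\ref{Thm:MainB} forces $K$ to be a lens; the converse is again immediate.

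The main obstacle — really the only substantive point beyond bookkeeping — is establishing the strict monotonicity and continuity of the two functions $s \mapsto r(L_s)$ and $s \mapsto |\partial L_s|$ along the one-parameter family of $\lambda$-convex lenses, together with the precise description of their ranges (especially the upper endpoint $\rr_\lambda$ in the equidistant hyperbolic case, where one must check that as the lens degenerates its inradius increases to $\rr_\lambda$ and its surface area increases to that of $\Bb_\lambda$). This is an elementary but case-dependent computation in each model space $M^n(c)$, using the explicit description of totally umbilical hypersurfaces from Section~\ref{SSec:Facts}; once it is in hand, the logical equivalence is a formal consequence as sketched above, and I would present it exactly in the two-implication form given here, making sure the equality clauses are tracked in both directions.
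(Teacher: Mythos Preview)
Your proposal is correct and follows essentially the same monotonicity argument as the paper: introduce an auxiliary lens matching $K$ in the \emph{other} quantity, apply the assumed theorem, and use the strict monotone correspondence $r(L) \leftrightarrow |\partial L|$ along the lens family to conclude. The paper presents the two implications in the opposite order and does not explicitly track the equality clauses (it handles those in the proofs of the theorems themselves), nor does it pause to justify the monotonicity of $r(L) \mapsto |\partial L|$, which it simply asserts; your flagging of this as the one substantive ingredient is accurate but perhaps more cautious than the paper deems necessary.
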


\begin{proof}[Proof of Claim~\ref{Claim:Equivalent}]
We use the monotonicity argument. Suppose Theorem~\ref{Thm:MainB} is true. Let $K$ and $L$ be as in the assumption of Theorem~\ref{Thm:MainC}, i.e., $r(L)=r(K)$. Let $L'$ be the $\lambda$-convex lens such that $|\partial L'| = |\partial K|$. By Theorem~\ref{Thm:MainB}, $r(L') \le r(K)$. Hence, $r(L') \le r(L)$. The surface area of a lens grows monotonically as its inradius increases. Thus, $|\partial L'| \le |\partial L|$. We conclude that $|\partial K| \le |\partial L|$, which yields Theorem~\ref{Thm:MainC}.

In the other direction, suppose Theorem~\ref{Thm:MainC} is true. Let $K$ and $L$ be as in the assumption of Theorem~\ref{Thm:MainB}, i.e., $|\partial L| = |\partial K|$. Let $L''$ be a $\lambda$-convex lens such that $r(L'') = r(K)$. Since~\eqref{Eq:AssEquidistant} holds, such a lens $L''$ always exists. By Theorem~\ref{Thm:MainC}, $|\partial L''| \ge |\partial K|$. Hence, $|\partial L''| \ge |\partial L|$, and $r(L'') \ge r(L)$ by monotonicity. Therefore, $r(K) \ge r(L)$, as asserted by Theorem~\ref{Thm:MainB}.
\end{proof}

In view of Claim~\ref{Claim:Equivalent} and our discussion at the beginning of the section, to prove Theorem~\ref{Thm:MainB} it is enough to establish Theorem~\ref{Thm:MainC}. The rest of the section will be dedicated to the proof of Theorem~\ref{Thm:MainC}. The proof will be based on three claims: Claims~\ref{Claim2}, \ref{Claim3}, and \ref{KeyClaim} (Key Claim).

\subsection{Proof of Theorem~\ref{Thm:MainC} (assuming the Key Claim)}

By the first part of Proposition~\ref{Prop:Reduction}, we can assume that $K$ is a $\lambda$-convex polytope.

Fix the inradius $r = r(K) = r(L) > 0$. Let $B \subset K$ be an inscribed ball for $K$ (of radius $r$), and denote by $o$ the center of $B$. Consider the radial projection  
\begin{equation}\label{Eq::radial}
	\Pi \colon M^n(c) \sm \{o\} \to \partial B
\end{equation}
from the center of $B$ onto its boundary (see Figure~\ref{Fig:Appr2}). By symmetry, $\Pi$ maps each edge of $K$ to piece of a great sphere in $\partial B$.

We enumerate the facets of $K$ as $F_1, \ldots, F_\ell$, and let $\Ss_1, \ldots, \Ss_\ell$ be the totally umbilical hypersurfaces containing the corresponding facets, $F_i \subset \Ss_i$. As usual, we denote by $\Bb_i$ the convex region bounded by $\Ss_i$. We have
\[
K = \bigcap_{i=1}^\ell \Bb_i.
\]

\begin{claim}
\label{Claim2}
Assume there exist facets of $K$ that do not touch $B$, and let $\mathcal N \subset \{ 1, \dots, \ell\}$ be the set of their indices. If 
\[
K' = \bigcap_{i=1, \, i  \not\in \mathcal N}^\ell \Bb_i,
\]
then
\[
r(K') = r(K) \quad \text{ and } \quad |\partial K| < |\partial K'|.
\]
\end{claim}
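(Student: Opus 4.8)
\textbf{Proof proposal for Claim~\ref{Claim2}.}

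The plan is to show that passing from $K$ to $K'$ by discarding the facets that miss $B$ produces a larger $\lambda$-convex polytope with the \emph{same} inscribed ball $B$; the strict area inequality is then automatic from $K \subsetneq K'$. First I would check that $K'$ is indeed a $\lambda$-convex polytope: it is a finite intersection of convex regions $\Bb_i$, and it is compact since $K' \supseteq K$ is wedged inside each $\Bb_i$ exactly as $K$ was — in the hyperbolic equidistant case one has to invoke Lemma~\ref{Lem:Cpt2}, using that the set of touching points of $B$ with the \emph{retained} facets still does not lie in any open half-space through $o$ (see below), and that $r(K) < \rr_\lambda$ by \eqref{Eq:AssEquidistant}. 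Since $K = \bigcap_{i=1}^\ell \Bb_i \subseteq \bigcap_{i \notin \mathcal N} \Bb_i = K'$ and $K$ is not equal to $K'$ (at least one facet-defining $\Bb_i$, $i \in \mathcal N$, genuinely cuts into $K'$, for otherwise $F_i$ would not be a facet of $K$), we get $|\partial K| < |\partial K'|$ by strict monotonicity of surface area under strict inclusion of convex bodies.

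The substantive point is $r(K') = r(K)$. Clearly $r(K') \ge r(K)$ since $B \subseteq K \subseteq K'$. For the reverse inequality I would apply Proposition~\ref{Prop:Properties}\eqref{It:PartII}: I claim $B$ is the inscribed ball of the polytope $K'$. By construction every facet of $K'$ is (a piece of) one of the retained hypersurfaces $\Ss_i$, $i \notin \mathcal N$, each of which is tangent to $B$ by the defining property of $\mathcal N$; so $B$ touches all facets of $K'$. It remains to verify that $\partial B \cap \partial K'$ is not contained in any open half-space with respect to a hyperplane through $o$. This follows from the same property for $K$: since $B$ is the inscribed ball of $K$, Proposition~\ref{Prop:Properties}\eqref{It:PartI} gives that $\partial B \cap \partial K$ is not contained in any such open half-space; but the tangency points of $B$ with facets $F_i$, $i \in \mathcal N$, lie in the \emph{interior} of $K$ (those facets do not touch $B$), so $\partial B \cap \partial K = \partial B \cap \partial K'$ — removing the non-touching facets does not remove any point of $\partial B \cap \partial K$, and does not add new ones since $K' \supseteq K$ means $\partial B \cap \partial K' \subseteq \partial B \cap (M^n(c)\setminus \inter K')$, and a point of $\partial B$ on $\partial K'$ lies on some retained $\Ss_i$, hence on $\partial K$ too. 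Therefore $\partial B \cap \partial K'$ inherits the half-space property, and Proposition~\ref{Prop:Properties}\eqref{It:PartII} applies: $B$ is the (unique) inscribed ball of $K'$, so $r(K') = r(K)$.

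The main obstacle I anticipate is the bookkeeping around $\partial B \cap \partial K = \partial B \cap \partial K'$: one must argue carefully that discarding facets that are disjoint from $B$ changes neither the contact set on $\partial B$ nor the half-space configuration, and — in the equidistant hyperbolic subcase — that compactness of $K'$ survives, which is precisely where Lemma~\ref{Lem:Cpt2} and the hypothesis $r(K) < \rr_\lambda$ enter. Everything else is a direct citation of Proposition~\ref{Prop:Properties} and monotonicity of surface area.
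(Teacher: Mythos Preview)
Your proposal is correct and follows essentially the same route as the paper: compactness of $K'$ via Lemmas~\ref{Lem:Cpt1}/\ref{Lem:Cpt2}, strict inclusion $K \subsetneq K'$ for the area inequality, and then Proposition~\ref{Prop:Properties}\eqref{It:PartI}--\eqref{It:PartII} together with $\partial B \cap \partial K = \partial B \cap \partial K'$ to conclude $r(K') = r(K)$. If anything, you spell out more carefully than the paper why $\partial B \cap \partial K = \partial B \cap \partial K'$ and why $K \neq K'$; the paper just says ``by construction'' for both. (One phrasing nit: your sentence about ``the tangency points of $B$ with facets $F_i$, $i \in \mathcal N$'' is garbled, since those facets have no tangency points with $B$ --- but your intended meaning, that all points of $\partial B \cap \partial K$ lie on retained facets, is clear and correct.)
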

  
\begin{proof}
Firstly, observe that $K'$ is compact. This follows by Lemma~\ref{Lem:Cpt2} in the equidistant case, and by Lemma~\ref{Lem:Cpt1} in all other cases. Hence, $K'$ is a $\lambda$-convex polytope. By construction, $K \subset K'$, so we obtain $|\partial K| < |\partial K'|$.

Finally, since $B$ is the inscribed ball for $K$, the set $\partial B \cap \partial K$ is not contained in any open half space with respect to a hyperplane passing through $o$ (by part~\eqref{It:PartI} of Proposition~\ref{Prop:Properties}). By construction, $B \subset K'$ and $\partial B \cap \partial K = \partial B \cap \partial K'$. Hence, $\partial B \cap \partial K'$ does not lie in any open half space through $o$. Moreover, each facet of $K'$ touches $B$. By part~\eqref{It:PartII} of Proposition~\ref{Prop:Properties}, $B$ is the inscribed ball of $K'$. Thus, $r(K') = r(K)$.  
\end{proof}

To formulate the next two claims, we introduce some additional notation. By Claim~\ref{Claim2}, we can assume that each $F_i$ touches $B$ at some point $p_i$. Fix $i \in \{1,\ldots,\ell\}$. Let $\delta_i$ be a totally geodesic hyperplane passing through $o$ perpendicularly to the geodesic $op_i$, and let $\mathcal H_i$ be the closed half-space bounded by $\delta_i$ and containing the point $p_i$. Consider a \emph{natural radial extension} $F$ of the facet $F_i$ defined as $F := \Ss_i \cap \mathcal H_i$. 

Observe that the natural radial extension $F$ is compact. Indeed, for $c \ge 0$, or $c < 0$ and $\lambda > \sqrt{-c}$, this is true since in these cases $\mathcal S_i$ is a geodesic sphere, and hence compact. If $c < 0$ and $\lambda = \sqrt{-c}$, then $\Ss_i$ is a horosphere. In this case, choose the coordinates in the Poincar\'e unit ball model so that $\mathcal H_i$ is a closed upper half-ball centered around the north pole and $\mathcal S_i$ is a Euclidean sphere inside the unit ball touching the south pole. It becomes clear that $\Ss_i \cap \mathcal H_i$ is a compact set. 

\begin{wrapfigure}{r}{0.38\textwidth}
\definecolor{ffqqff}{rgb}{1.,0.,1.}
\definecolor{qqzzff}{rgb}{0.,0.6,1.}
\definecolor{qqqqff}{rgb}{0.,0.,1.}
\definecolor{ffxfqq}{rgb}{1.,0.4980392156862745,0.}
\definecolor{ttzzqq}{rgb}{0.2,0.6,0.}
\begin{tikzpicture}[line cap=round,line join=round,>=triangle 45,x=1.0cm,y=1.0cm,scale=0.8]
\clip(-3.2,-3.1) rectangle (3.5,3.1);
\draw [shift={(0.,0.)},line width=0.1pt,color=ttzzqq,fill=ttzzqq,fill opacity=0.10000000149011612]  (0,0) --  plot[domain=0.:3.141592653589793,variable=\t]({1.*3.*cos(\t r)+0.*3.*sin(\t r)},{0.*3.*cos(\t r)+1.*3.*sin(\t r)}) -- cycle ;
\draw [line width=1.1pt,color=ttzzqq] (-3.,0.)-- (3.,0.);
\draw [line width=0.8pt] (0.,0.) circle (3.cm);
\draw [shift={(0.,-6.416442619084425)},line width=1.1pt,color=qqqqff]  plot[domain=1.0842805504705857:2.057312103119208,variable=\t]({1.*5.671925236108369*cos(\t r)+0.*5.671925236108369*sin(\t r)},{0.*5.671925236108369*cos(\t r)+1.*5.671925236108369*sin(\t r)});
\draw [shift={(0.,-1.2577608269436564)},line width=1.1pt,color=qqzzff]  plot[domain=-0.05458032076234964:3.1961729743521428,variable=\t]({1.*2.6558567282357988*cos(\t r)+0.*2.6558567282357988*sin(\t r)},{0.*2.6558567282357988*cos(\t r)+1.*2.6558567282357988*sin(\t r)});
\draw [line width=0.8pt] (0.,1.3980959012921426)-- (0.,0.);
\draw [line width=0.8pt,color=ffqqff] (0.,0.)-- (0.,-0.7445173829760552);
\begin{scriptsize}
\draw [color=ffxfqq](2.6558444474344136,-1.2) node[anchor=north west] {$Q$};
\draw [color=ffxfqq](-3.2,-1.2) node[anchor=north west] {$Q$};
\draw [color=qqqqff](-1.5,-1) node[anchor=north west] {$T$};
\draw [color=qqzzff](1.9,-0.1315101948319532) node[anchor=north west] {$\mathcal S_i$};
\draw [color=ttzzqq](-1.8921965362934485,2.1506609081139154) node[anchor=north west] {$\mathcal H_i$};
\draw (0,0.9) node[anchor=north west] {$r$};
\draw [color=ffqqff](0,-0.1) node[anchor=north west] {$\mathfrak{r}_\lambda-r$};
\draw (1.3,3.2) node[anchor=north west] {$M^n(c),c<0$};
\draw [color=ffxfqq, fill=ffxfqq] (-2.6519017964432647,-1.4026463781085334) circle (2.5pt);
\draw [color=ffxfqq, fill=ffxfqq] (2.6519017964432643,-1.4026463781085339) circle (2.5pt);
\draw [fill=black] (0.,0.) circle (1.5pt);
\draw [fill=black] (0.,1.3980959012921426) circle (1.5pt);
\draw [fill=black] (0.,-0.7445173829760552) circle (1.5pt);
\end{scriptsize}
\end{tikzpicture}
\caption{}
\label{Fig:NRE}
\end{wrapfigure}

\medskip
Finally, if $c < 0$ and $\lambda < \sqrt{-c}$, then $\Ss_i$ is an equidistant. Again, choose the coordinates in the ball model so that $\mathcal H_i$ is a closed upper half-ball centered around the north pole and $\Ss_i$ the Euclidean sphere that intersects the boundary of the unit ball along the small sphere $Q$ parallel to the equatorial sphere (see Figure~\ref{Fig:NRE}). Let $T$ be the hyperbolic hyperplane spanning $Q$. Then $\Ss_i$ is a connected component of the equidistant set for $T$ at distance $\rr_\lambda$. Since $r < \rr_\lambda$, $Q$ must lie in the open southern hemisphere of the unit ball. Therefore, $\Ss_i \cap \mathcal H_i$ is compact.

\medskip

Since $F$ is compact, the $(n-1)$-dimensional volume of $F$ is well-defined and we observe that 
\begin{equation}
\label{Eq:Half}
|F| = \frac{1}{2} \cdot |\partial L|.
\end{equation}
(Here and everywhere below, we emphasize that $|\cdot|$ denotes the $(n-1)$-dimensional volume.) Define
\[
\tF_i = \Pi(F_i), \quad \tF = \Pi(F).
\]
Note that $\tF$ is the half-sphere. 

\begin{claim}
\label{Claim3}
\begin{equation}
\label{Eq:Claim3}
\sum_{i=1}^\ell |\tF_i| = \left|\partial B\right| = 2 \cdot |\tF|.
\end{equation}
\end{claim}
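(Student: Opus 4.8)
The plan is to show that the radial projections $\tF_i = \Pi(F_i)$ of the facets of $K$ tile the sphere $\partial B$ up to a set of measure zero. The statement then follows immediately, since the $(n-1)$-volume of a half-sphere $\tF$ is exactly half of $|\partial B|$.

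First I would argue that the images $\tF_i$ have pairwise disjoint interiors. Two distinct facets $F_i$ and $F_j$ meet (if at all) in a lower-dimensional face of $K$, and by symmetry of $K$ with respect to the hyperplanes $\delta_i$ (established in the paragraph preceding Claim~\ref{Claim3}), $\Pi$ sends such a face into a great-sphere piece of dimension at most $n-2$, hence of $(n-1)$-measure zero. So $\Pi(\inter F_i) \cap \Pi(\inter F_j)$ is contained in this null set; overlaps contribute nothing to the sum. Here one uses that $\Pi$ restricted to $\partial K$ is injective up to these null sets, which follows because $K$ is star-shaped with respect to $o$ (indeed $o$ is an interior point, being the center of the inscribed ball $B \subset K$), so each ray from $o$ meets $\partial K$ in exactly one point.

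Next I would argue that the images $\tF_i$ cover $\partial B$. This is again a consequence of $K$ being star-shaped with respect to its interior point $o$: every ray emanating from $o$ eventually exits $K$, hitting $\partial K$ at a unique point, which lies on some facet $F_i$ (the facets together with their lower-dimensional faces exhaust $\partial K$ since $K$ is a $\lambda$-convex polytope). Projecting, every point of $\partial B$ lies in some $\tF_i$. Combining disjointness of interiors with the covering property gives
\[
|\partial B| = \sum_{i=1}^\ell |\tF_i|.
\]
For the second equality in \eqref{Eq:Claim3}, note that $\tF = \Pi(F)$ with $F = \Ss_i \cap \mathcal H_i$ a natural radial extension; since $\mathcal H_i$ is a closed half-space bounded by a hyperplane $\delta_i$ through the center $o$ of $B$, and $\Ss_i$ is a connected totally umbilical hypersurface tangent to $B$ on the $p_i$-side of $\delta_i$, radial projection from $o$ carries $\Ss_i \cap \mathcal H_i$ bijectively (away from a null set on $\delta_i \cap \partial B$) onto the closed half-sphere $\partial B \cap \mathcal H_i$. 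Hence $|\tF| = \frac12 |\partial B|$, which is the content of the last equality.

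I do not anticipate a serious obstacle here; the main point requiring care is the measure-zero bookkeeping — verifying that the overlaps $\Pi(F_i) \cap \Pi(F_j)$ and the projections of edges and lower faces genuinely have vanishing $(n-1)$-dimensional volume, which rests on the symmetry of $K$ across each $\delta_i$ and the fact that a proper totally geodesic subsphere of $\partial B$ is null. Once that is in place, the identity is just additivity of volume over an (essentially disjoint) cover.
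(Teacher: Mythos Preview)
Your approach is correct and matches the paper's: both amount to observing that the $\tF_i$ partition $\partial B$ up to a null set (since $o$ is interior to the convex body $K$, the radial map $\Pi|_{\partial K}$ is a bijection onto $\partial B$), together with the already-noted fact that $\tF$ is a half-sphere.

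One inaccuracy worth flagging: you invoke ``symmetry of $K$ with respect to the hyperplanes $\delta_i$ (established in the paragraph preceding Claim~\ref{Claim3})'', but no such symmetry is established there, and indeed an arbitrary $\lambda$-convex polytope $K$ need not be symmetric across $\delta_i$. The symmetry the paper actually uses is that of each \emph{lens} $\Bb_i \cap \Bb_j$ under reflection in the perpendicular bisector hyperplane of $p_i p_j$; this hyperplane passes through $o$ because $|op_i| = |op_j| = r$, and it is this that forces the edge $\Ss_i \cap \Ss_j$ to lie in a hyperplane through $o$ and hence to project to a great sphere. That said, for your purposes the great-sphere structure is unnecessary: once $\Pi|_{\partial K}$ is a bijection, $\tF_i \cap \tF_j = \Pi(F_i \cap F_j)$ is the Lipschitz image of a set of dimension at most $n-2$, hence has zero $(n-1)$-measure, and the additivity goes through without any appeal to symmetry.
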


\begin{proof}
It follows from the fact that for each $i$ and $j \neq i$, the lens $\Bb_i \cap \Bb_j$ is reflection-symmetric with respect to the totally geodesic hyperplane that passes through $o$ and is perpendicular to the plane spanned by $o$, $p_i$ and $p_j$.   
\end{proof}

\begin{claim}[Key Claim]
\label{KeyClaim}
For each $i \in \{1, \ldots, \ell\}$,
\begin{equation}
\label{Eq:KeyClaim}
\frac{\left|F_i\right|}{|\tF_i|} \le \frac{\left|F\right|}{|\tF|},
\end{equation}
and the equality is possible if and only if $F_i = F$.
\end{claim}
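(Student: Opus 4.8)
The plan is to fix an index $i$, reduce to a two-dimensional picture by exploiting the rotational symmetry of the objects involved, and then compare $|F_i|$ with $|F|$ facet by facet using the radial projection $\Pi$ as a change of variables. First I would observe that both $F$ and $\tF$ are rotationally symmetric: $F = \Ss_i \cap \mathcal H_i$ is a ``cap'' of the totally umbilical hypersurface $\Ss_i$ cut off by the hyperplane $\delta_i$ through the center $o$ perpendicular to $op_i$, and $\tF = \Pi(F)$ is a hemisphere of $\partial B$. The facet $F_i$ sits inside $F$ (since $K \subset \mathcal H_i$, because $B \subset K$ touches $\Ss_i$ only at $p_i$ and $o$ lies on the correct side of $\delta_i$), and $\tF_i = \Pi(F_i) \subset \tF$. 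So I only need to understand how the ratio $|{\rm region}|/|\Pi({\rm region})|$ behaves for subregions of the cap $F$.

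The key computational point is that this ratio is \emph{monotone along the radial direction from the pole $p_i$}: the Jacobian of $\Pi^{-1}\colon \tF \to F$, expressed in geodesic polar coordinates $(\theta, \omega)$ on the hemisphere $\tF$ centered at $\Pi(p_i)$ (here $\theta \in [0, \pi/2]$ is the polar angle, $\omega \in \Sphere$-equivalent angular coordinate), depends only on $\theta$ and is a nondecreasing function $J(\theta)$ of $\theta$, with $J$ strictly increasing on $(0,\pi/2)$ for $\Ss_i \ne \partial B$. This is where the curvature constraint enters: the totally umbilical hypersurface $\Ss_i$ of normal curvature $\lambda > 0$, being the image of $\partial B$ under the radial map, ``opens away'' from $o$, so that a fixed solid angle near the rim $\theta = \pi/2$ subtends more $(n-1)$-area on $\Ss_i$ than the same solid angle near the pole. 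Concretely, writing $|F_i| = \int_{\tF_i} J(\theta)\, d\sigma$ and $|\tF_i| = \int_{\tF_i} d\sigma$ where $\sigma$ is the spherical measure on $\partial B$, and similarly $|F| = \int_{\tF} J(\theta)\,d\sigma$, $|\tF| = \int_{\tF} d\sigma$, the inequality \eqref{Eq:KeyClaim} becomes
\[
\left(\int_{\tF} d\sigma\right)\left(\int_{\tF_i} J\, d\sigma\right) \le \left(\int_{\tF} J\, d\sigma\right)\left(\int_{\tF_i} d\sigma\right),
\]
i.e.\ the average of $J$ over the subregion $\tF_i$ is at most the average of $J$ over the full hemisphere $\tF$. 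Since $\tF_i$ is a rotationally-symmetric-in-$\omega$ subset (by symmetry of $K$ — actually $\tF_i$ need not be rotationally symmetric, but it is contained in $\{\theta \le \theta_i\}$ for a suitable $\theta_i$, as $F_i$ is cut from $\Ss_i$ by the other supporting hypersurfaces which all pass at angle $\ge$ something through points at distance $r$ from... ) — more carefully: $F_i \subset F$ and along any ray from the pole the portion of $F$ retained in $F_i$ is an initial segment, so $\tF_i$ is a ``radially star-shaped'' subset of $\tF$ contained in the sub-hemisphere, and the mass of $\tF_i$ is concentrated at smaller $\theta$ than that of $\tF$. Then the averaged-monotonicity inequality follows from Chebyshev's sum/integral inequality (or a direct rearrangement argument: $\int_{\tF}\int_{\tF_i}(J(\theta) - J(\theta'))\,d\sigma(\theta')d\sigma(\theta) \ge 0$ because the integrand is $\ge 0$ whenever $\theta' \le \theta$, which holds on the ``bulk'' after pairing up).

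The main obstacle I expect is making the claim ``$\tF_i$ sits at smaller polar angle than $\tF$'' precise enough to run the rearrangement argument — in particular handling the possibility that $F_i$ extends all the way to the rim $\theta = \pi/2$ in some directions. The cleanest route is probably to write everything as a one-dimensional integral in $\theta$ by integrating out the $\omega$ variable: set $g_i(\theta) = \sigma$-measure of the $\theta$-slice of $\tF_i$ and $g(\theta) = \sigma$-measure of the $\theta$-slice of $\tF$ (a known explicit function of $\theta$), note $0 \le g_i(\theta) \le g(\theta)$, and that $g_i$ is supported in an initial interval $[0, \theta_i]$ only after we argue $F_i$ is ``retracted toward the pole'' — but actually even without that, since $g_i \le g$ pointwise and $J$ is increasing, the inequality
\[
\frac{\int_0^{\pi/2} g_i(\theta)J(\theta)\,d\theta}{\int_0^{\pi/2} g_i(\theta)\,d\theta} \le \frac{\int_0^{\pi/2} g(\theta)J(\theta)\,d\theta}{\int_0^{\pi/2} g(\theta)\,d\theta}
\]
is \emph{not} automatic from $g_i \le g$ alone. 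So the real content is a monotonicity-of-slices statement for $\lambda$-convex polytopes: because all facets touch $B$ and $K$ is $\lambda$-convex, the facet $F_i$ is obtained from the full cap $F$ by \emph{chopping off pieces near the rim}, so $g(\theta) - g_i(\theta)$ is increasing in $\theta$ (more mass is removed near $\theta = \pi/2$). Granting that, the displayed inequality follows from Chebyshev's inequality applied to the measures $g_i\,d\theta$ and $(g-g_i)\,d\theta$ against the increasing function $J$. For the equality case, strict monotonicity of $J$ on $(0,\pi/2)$ forces $g_i = g$ a.e., hence $F_i = F$. I would carry out the steps in this order: (1) set up polar coordinates on $\tF$ and identify $J(\theta)$ explicitly via the geometry of $\Ss_i$ (three cases $c=0$, $c>0$, $c<0$), checking $J$ increasing; (2) prove the slice-monotonicity $g - g_i$ increasing using $\lambda$-convexity and that every facet is tangent to $B$; (3) conclude via Chebyshev and track equality.
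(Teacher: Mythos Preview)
Your approach coincides with the paper's in all essentials: both set up polar coordinates $(\theta,\omega)$ on $\tF$ centered at the pole $\Pi(p_i)$, observe that the density $J(\theta)$ (the paper writes $g(t,\theta)$) depends only on $\theta$ and is strictly increasing, and reduce the claim to the fact that $\tF_i$ is radially star-shaped from the pole, i.e.\ each ray meets $\tF_i$ in an initial segment $[0,\theta_*(\omega)]$. The paper packages this as an inequality for \emph{conical sectors} $C_x=\Pi^{-1}\bigl(\{\theta\le \pi x/2\}\cap\text{wedge}\bigr)$: one shows $|C_x|/|\tC_x|\le \mathcal F$ for every $x\in[0,1]$, with equality only at $x=1$, by applying a zero-mean lemma (if $f$ is strictly increasing with $\int_0^1 f=0$ then $\int_0^x f<0$ for $x<1$) to $\theta\mapsto J(\theta)-\mathcal F$ weighted by the area element; then $F_i$ is approximated by such sectors.

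The one genuine gap in your write-up is the Chebyshev step. The hypothesis you isolate, ``$g-g_i$ is increasing'', is \emph{not} sufficient for the displayed inequality: on $[0,1]$ take $g(\theta)=1+10\theta$, $g_i(\theta)=\theta$, $J(\theta)=\theta$; then $g-g_i=1+9\theta$ is increasing yet $\int g_iJ\big/\int g_i=\tfrac{2}{3}>\tfrac{23}{36}=\int gJ\big/\int g$. What star-shapedness actually hands you is that $g_i/g$ is \emph{nonincreasing} --- indeed $g_i(\theta)/g(\theta)=\alpha(\theta)/|\S^{n-2}|$ with $\alpha(\theta)=|\{\omega:\theta_*(\omega)\ge\theta\}|$ nonincreasing --- and \emph{that} is the correct hypothesis for Chebyshev against the measure $g\,d\theta$. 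Simpler still, and this is exactly what the paper does, you can bypass Chebyshev entirely: for each fixed $\omega$ the zero-mean lemma gives $\int_0^{\theta_*(\omega)}\bigl(J(\theta)-\mathcal F\bigr)s(\theta)\,d\theta\le 0$, strict unless $\theta_*(\omega)=\pi/2$; integrating over $\omega$ yields \eqref{Eq:KeyClaim} together with the equality case. So your step~(2) as stated is both unnecessary and, as formulated, insufficient; once you have star-shapedness, the ray-by-ray argument replaces it and matches the paper's Steps~I--III.
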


We postpone the proof of the Key Claim to the next subsection. Instead, we show how Claim~\ref{KeyClaim} together with Claim~\ref{Claim3} imply Theorem~\ref{Thm:MainC}. We have, 
\begin{equation*}
\begin{aligned}
|\partial K| &= \sum_{i=1}^\ell |F_i| \overset{\eqref{Eq:KeyClaim}}{\le} \sum_{i=1}^\ell |\tF_i| \cdot \frac{|F|}{|\tF|} \\
&\le \frac{|F|}{|\tF|} \cdot \sum_{i=1}^\ell |\tF_i| \overset{{\eqref{Eq:Claim3}}}{=} \frac{|F|}{|\tF|} \cdot 2 |\tF| = 2 |F| \\
& \overset{\eqref{Eq:Half}}{=} \left|\partial L\right|.
\end{aligned}
\end{equation*}
To finish the proof, it remains to analyze the equality case. If $|\partial K| = |\partial L|$, then there must be equality in the chain of the inequalities above. By Claim~\ref{KeyClaim}, it is only possible if all the facets of $K$ are equal to one of the facets of the lens. This implies that $K$ must be a lens itself, and thus Theorem~\ref{Thm:MainC} follows.   

\subsection{Proof of the Key Claim}

For the proof of Claim~\ref{KeyClaim}, we need the following auxiliary result.
\begin{lemma}
	\label{Claim1}
	If $f \colon [0, 1] \to \R$ is a continuous, strictly increasing function with zero mean, i.e., 
	\[
	\int \limits_0^1 f(t) dt = 0,
	\]
	then 
	\begin{equation}
		\label{Eq:Claim}
		\int \limits_0^x f(t) dt \le 0 \quad \text{for all } x \in [0,1],
	\end{equation}
	and this inequality is strict unless $x = 1$.
\end{lemma}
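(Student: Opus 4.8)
The plan is to prove Lemma~\ref{Claim1} by a direct sign analysis of the function $F(x) := \int_0^x f(t)\,dt$, exploiting that $f$ is continuous and strictly increasing with $F(0) = F(1) = 0$. First I would note that since $f$ is strictly increasing and continuous on $[0,1]$ with $\int_0^1 f = 0$, the function $f$ cannot be identically zero, so it must be strictly negative on some initial segment and strictly positive on some final segment; more precisely, there is a unique point $c \in (0,1)$ with $f(c) = 0$, and $f(t) < 0$ for $t \in [0,c)$ while $f(t) > 0$ for $t \in (c,1]$. This is the key structural fact and it is essentially immediate from monotonicity plus the intermediate value theorem (the zero exists because $f$ changes sign, since a function of one sign would have nonzero integral; it is unique because $f$ is strictly monotone).

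Next I would analyze $F$ on the two subintervals. On $[0,c]$, the integrand $f$ is $\le 0$, so $F$ is nonincreasing there, and in fact strictly decreasing on $(0,c]$ since $f < 0$ on $(0,c)$; hence $F(x) \le F(0) = 0$ for all $x \in [0,c]$, with strict inequality for $x \in (0,c]$. On $[c,1]$, the integrand $f$ is $\ge 0$, so $F$ is nondecreasing there, and strictly increasing on $[c,1)$; hence for $x \in [c,1]$ we have $F(x) \le F(1) = 0$, with equality only at $x = 1$. Combining the two cases gives $F(x) \le 0$ for all $x \in [0,1]$, and the inequality is strict unless $x = 1$ (note $x=0$ gives $F(0)=0$, but that is covered: wait—here one must be slightly careful, since $F(0) = 0$ as well). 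Let me reconcile: the statement claims strictness unless $x = 1$, yet $F(0) = 0$. The resolution is that $x=0$ is the degenerate endpoint of the first interval; re-reading the lemma, presumably the intended reading includes $x=0$ among the equality cases, or the domain of interest is $x \in (0,1]$. In writing the proof I would simply state the sharp conclusion honestly: $F(x) < 0$ for $x \in (0,1)$ and $F(0) = F(1) = 0$, which is what is actually true and what the later application (via $x$ ranging over a nondegenerate subinterval) needs.

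The only mild obstacle is this boundary bookkeeping at $x=0$; there is no analytic difficulty whatsoever, since everything reduces to the monotonicity of an antiderivative of a sign-definite function on each of two pieces. I would present the argument compactly: establish the sign-change point $c$, observe $F$ decreases then increases with $F(0)=F(1)=0$, and read off the claim. The application in the Key Claim will use $f$ of the form (roughly) a normalized curvature-weight difference along the radial sweep, with the zero-mean condition encoding the surface-area normalization, so the precise endpoint convention will not matter there.

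\begin{proof}[Proof of Lemma~\ref{Claim1}]
Set $F(x) := \int_0^x f(t)\,dt$, so that $F$ is continuously differentiable with $F'(x) = f(x)$, and $F(0) = 0$, $F(1) = 0$ by hypothesis. Since $f$ is strictly increasing, it is not identically zero; as $\int_0^1 f = 0$, the function $f$ cannot be of one sign on all of $[0,1]$, so it takes both a negative and a positive value. By the intermediate value theorem there is a point where $f$ vanishes, and by strict monotonicity this point is unique; call it $c$. Then $c \in (0,1)$, $f(t) < 0$ for $t \in [0,c)$, and $f(t) > 0$ for $t \in (c,1]$.

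On $[0,c]$ we have $F'(t) = f(t) \le 0$, with strict inequality on $(0,c)$, so $F$ is strictly decreasing on $[0,c]$; hence $F(x) \le F(0) = 0$ for $x \in [0,c]$, with $F(x) < 0$ for $x \in (0,c]$. On $[c,1]$ we have $F'(t) = f(t) \ge 0$, with strict inequality on $(c,1)$, so $F$ is strictly increasing on $[c,1]$; hence $F(x) \le F(1) = 0$ for $x \in [c,1]$, with $F(x) < 0$ for $x \in [c,1)$. Combining the two ranges, $F(x) \le 0$ for every $x \in [0,1]$, and $F(x) = 0$ only for $x = 0$ or $x = 1$. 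In particular, $\int_0^x f(t)\,dt \le 0$ for all $x \in [0,1]$, and the inequality is strict for all $x \in (0,1)$.
\end{proof}
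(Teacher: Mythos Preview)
Your proof is correct and follows essentially the same approach as the paper: locate the unique zero $c$ of $f$, then check the sign of $F(x)=\int_0^x f$ on $[0,c]$ and on $[c,1]$ separately. The only cosmetic difference is that for $x>c$ the paper writes $\int_0^x f = -\int_x^1 f \le 0$, whereas you argue via monotonicity of $F$ on $[c,1]$ together with $F(1)=0$; these are equivalent. Your remark that $F(0)=0$ as well is accurate and does not affect the application.
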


\begin{proof}
	Observe that $f(0) \le 0$ and $f(1) \ge 0$. Indeed, if $f(0) > 0$, then $f(t) > 0$ for all $t \in [0,1]$ because $f$ is increasing. This is a contradiction to the zero mean condition. Similarly with $f(1) \ge 0$. Again, since $f$ is strictly increasing, there exists $\alpha \in (0,1)$ so that $f(\alpha) = 0$, $f(t) < 0$ for all $t < \alpha$ and $f(t) > 0$ for all $t > \alpha$. Now \eqref{Eq:Claim} is clear if $x \le \alpha$. If $x > \alpha$, then
	\[
	\int \limits_0^x f(t) dt = - \int \limits_x^1 f(t) dt \le 0
	\]
	by the choice of $\alpha$ and the zero mean condition.
\end{proof}

Now we turn to the proof of Claim~\ref{KeyClaim}. It proceeds in three steps:
\begin{itemize}
\item
Step I: we establish an inequality similar to \eqref{Eq:KeyClaim} for certain conical sectors (defined below);
\item
Step II: we approximate each $F_i$ by such sectors (as indicated in Figure~\ref{Fig:Approx}); we get the required inequality using this approximation;
\item
Step III: we analyze the approximation procedure in order to conclude the equality case. 
\end{itemize}

\medskip
\noindent
\textit{Step I.} Consider a triple of objects: two hyperplanes $\xi$ and $\xi'$ passing through $op_i$ and, for $x \in [0,1]$, a solid cone $\Cc_x$ with angle $\pi x/2$ and centered around $op_i$ with vertex at $o$. Define $C_x$ to be a connected component of $\Ss_i \cap \Cc_x$ lying between $\xi$ and $\xi'$. We call $C_x$ a \emph{conical sector}. Let $\tC_x = \Pi(C_x)$ be the radial projection of $C_x$. Then, in the polar coordinates centered at $o$, we can write 
\[
|C_x| = \int \limits_{\tC_x} g(t,\theta)\,{\text d}\sigma,
\]
where $\sigma = (t,\theta)$ is a point in $\tC_x \subset \partial B$, $\text{d}\sigma = \text{d}t \text{d}\theta$ is the area measure on $\partial B$, $t$ is the angle between $o p_i$ and the direction of $\sigma$, and $g$ is the density.

Observe that, for $x = 1$, we have
\begin{equation}
\label{Eq:Observation}
\frac{|C_1|}{|\tC_1|} = \frac{|F|}{|\tF|} =: \mathcal F.
\end{equation}

Indeed, both $|C_1|$ and $|\tC_1|$ are linearly proportional to the angle between the hyperplanes $\xi$ and $\xi'$, as it immediately follows by integrating in polar coordinates centered at $p_i$. Therefore, the quotient of these volumes is independent of the angle, and equal to the right hand side in \eqref{Eq:Observation} (this corresponds to the case when the angle between $\xi$ and $\xi'$ is zero).

Define the function 
\[
\mathcal R(x) := \left|C_x\right| - |\tC_x| \cdot \mathcal F = \int \limits_{\tC_x} \left(g(t,\theta) - \mathcal F\right) \,\text{d}\sigma.
\]
By \eqref{Eq:Observation}, $\mathcal R(1) = 0$. Furthermore, we can rewrite
\[
\mathcal R(x) = \int \limits_{0}^x \left(\int_{\partial (\Cc_t \cap \tC_x)} \left(g(t,\theta) - \mathcal F\right) \text{d}\theta\right)\text{d}t.
\]
Since $t \mapsto g(t, \theta)$ is a continuous and strictly increasing function, the integrant 
\[
\widehat{\mathcal R} \colon t \mapsto \int_{\partial (\Cc_t \cap \tC_x)} \left(g(t,\theta) - \mathcal F\right) \text{d}\theta
\]
is also continuous and strictly increasing on $[0,1]$.

Applying Lemma~\ref{Claim1} to the function $\widehat{\mathcal R}$, we conclude that $\mathcal R(x) = \int_0^x \widehat{\mathcal R}(t) \text{d}t \le 0$ for $x \in [0,1]$. This translates into the inequality
\begin{equation}
\label{Eq:ApprBound}
\frac{\left|C_x\right|}{|\tC_x|} \le \mathcal F,
\end{equation}
which is true for every angle $x$ and every choice of hyperplanes $\xi$ and $\xi'$. 
 Moreover, 
\begin{equation}
\label{Eq:Equality}
\text{inequality \eqref{Eq:ApprBound} is strict unless $x = 1$.}
\end{equation}
This concludes Step I.

\medskip
\noindent
\textit{Step II.} Now we approximate the facet $F_i$ by conical sectors $C_x$ as follows (see Figure~\ref{Fig:Approx}).
\begin{figure}[htb]
\begin{center}
\includegraphics[width=0.75\textwidth, trim= 40 25 40 40, clip]{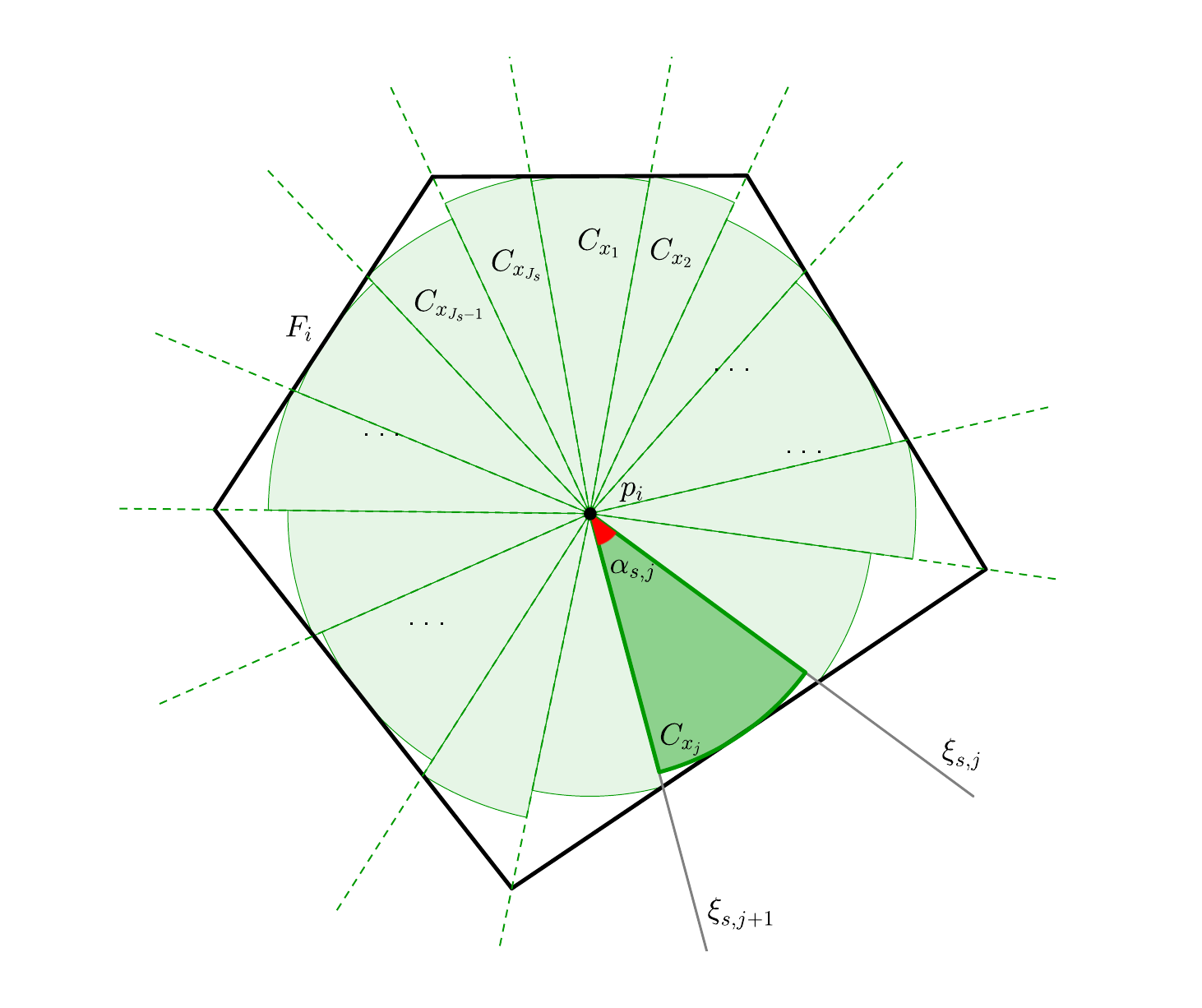}
\caption{Approximation of the facet $F_i$ with conical sectors $C_{x_j}$.}
\label{Fig:Approx}
\end{center}
\end{figure}
For an index $s \in \{1,2, \ldots,\}$, consider a sequence of $J_s$ totally geodesic hyperplanes $\left(\xi_{s,j}\right)_{j=1}^{J_s}$ passing through $op_i$ and labeled cyclically around $op_i$. Denote by $\alpha_{s,j}$ the angle between $\xi_{s,j}$ and $\xi_{s,j+1}$, and define 
\[
\overline \alpha_s := \max_{j \in \{1, \ldots, J_s\}} \alpha_{s,j}
\]
to be the maximal angle.

Let $G_{i, (s,j)}$ be the connected component of $F_i$ that lies between $\xi_{s,j}$ and $\xi_{s,j+1}$. We can order $G_{i,(s,j)}$ cyclically as well. For each `slice' $G_{i,(s,j)}$, consider the largest angle $x_j$ such that $C_{x_j} \subset G_{i,(s,j)}$, where $C_{x_j}$ is defined using the hyperplanes $\xi_{s,j}$ and $\xi_{s,j+1}$ (see Figure~\ref{Fig:Appr2}). It follows that
\[
|F_i| = \lim \limits_{\overline \alpha_s \to 0} \sum_{j = 1}^{J_s} \left|C_{x_j}\right| \text{ and } |\tF_i| = \lim \limits_{\overline \alpha_s \to 0} \sum_{j=1}^{J_s} |\tC_{x_j}|.
\]
Applying \eqref{Eq:ApprBound} to each pair $C_{x_j}, \tC_{x_j}$, summing up and passing to the limit as $\overline \alpha_s \to 0$, we obtain required inequality~\eqref{Eq:KeyClaim}. This finishes Step II, i.e., the proof of the inequality in Claim~\ref{KeyClaim}.

\begin{figure}[htb]
\begin{center}
\includegraphics[width=1.1\textwidth, trim= 40 20 30 20, clip]{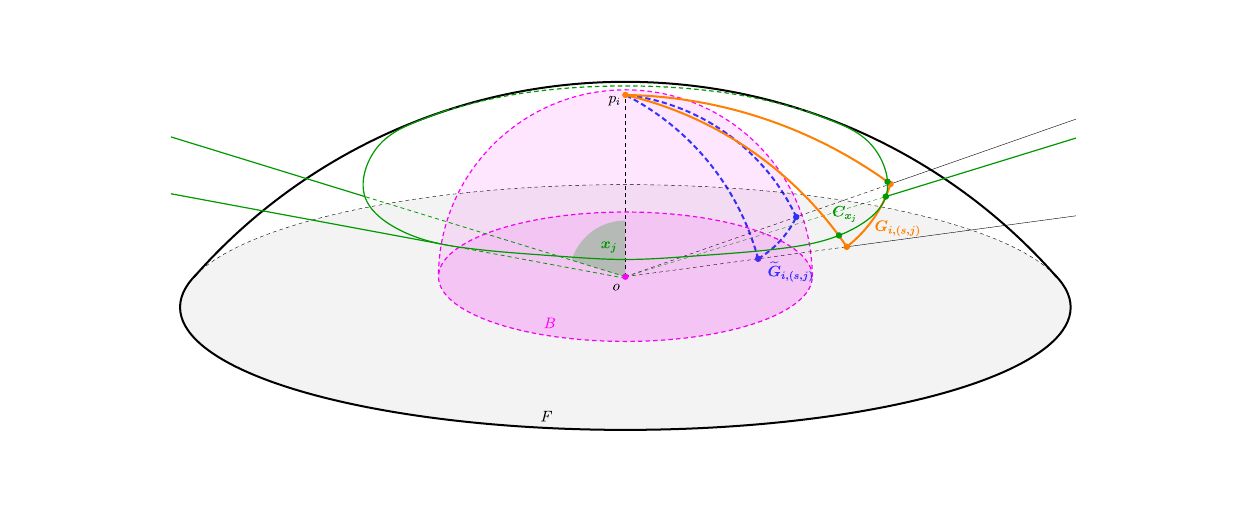}
\caption{Projection onto $\partial B$ and the inscribed conical sector $C_{x_j}$.}
\label{Fig:Appr2}
\end{center}
\end{figure}

\medskip
\noindent
\textit{Step III.} Now we establish the equality case in Claim~\ref{KeyClaim}. For this, let $\overline x$, respectively $\underline x$, be the smallest, respectively the largest angle $x \in [0,1]$ such that $\mathcal C_x \cap \Ss_i \supset F_i$, respectively $\mathcal C_x \cap \Ss_i \subset F_i$. By definition, $\mathcal C_{\overline x}$ is the `circumscribed' cone around $F_i$, while $\mathcal C_{\underline x}$ is the `inscribed' cone for $F_i$. In this notation,
\[
\underline x \le x_j \le \overline x \qquad \text{ for each }j \in \{1, \ldots, J_s\}.
\]
If $\underline x = 1$, then $\overline x = \underline x = 1$, and thus $F_i = F$. This yields equality in \eqref{Eq:KeyClaim}. Now  we show that if $\underline x < 1$, then one must have strict inequality in \eqref{Eq:KeyClaim}. This will conclude Step III.

If $\underline x < 1$, then by continuity of our approximation, there exist $\nu, \mu \in (0,1)$ (depending only on $\underline x$) such that for each sequence of cones $\left(C_{x_j}\right)_{j=1}^{J_s}$, we can find a sub-range of indices $\mathcal J' = \{j', \ldots, j'+k\}$ satisfying the following conditions:
\begin{itemize}
\item
the angle between $\xi_{s,j'}$ and $\xi_{s,j'+k}$ is equal to $\nu \cdot 2\pi$, i.e.,
\[
\sum_{j \in \mathcal J'} \alpha_{s,j} = \nu \cdot 2\pi;
\]
\item
$x_j < \mu < 0$ for each $j \in \mathcal J'$.
\end{itemize}
Since $\nu$ does not depend on $s$, the cones in the sequence $C_{x_j}$ for $j \in \mathcal J'$ approximate a definite proportion of the area of $F_i$, i.e., there exists a constant $\eta = \eta(\nu, \mu) \in (0,1)$ such that 
\[
\lim_{s \to \infty} \sum_{j \in \mathcal J'} |C_{x_j}| = \eta \cdot |F_i|.
\] 
The same is true for the corresponding projections (with the same constant $\eta$). By construction, for $j \in \mathcal J'$, we have $x_j < 1$, and by \eqref{Eq:Equality}, $|C_{x_j}| < \mathcal F \cdot |\tilde C_{x_j}|$. Summing up these inequalities over $j \in \mathcal J'$, we obtain that
\[
\sum_{j \in \mathcal J'} |C_{x_j}| < \mathcal F \cdot \sum_{j \in \mathcal J'} |\tilde C_{x_j}|
\]
Since $x_j < \mu < 1$ is true for every $s$, and $\mu$ does not depend on $s$, we can pass to the limit in the inequality above while keeping the strict inequality sign. Thus, in the case $\underline x < 1$, we conclude that $|F_i| < \mathcal F \cdot |\tilde F_i|$. As mentioned above, this implies that equality in \eqref{Eq:KeyClaim} is possible if and only if $F_i = F$.

\appendix

\section{Reverse isoperimetric inequality for $\lambda$-convex curves in $\mathbb R^2$, revisited}
\label{App:RIPDim2}

In this appendix, we give an alternative proof of the result in \cite{BorDr14} and \cite{FKV} on the reverse isoperimetric problem in $\R^2$ by using the method exploited in this paper (see Section~\ref{Sec:ProofA}).

\begin{theorem}[$2$-dimensional reverse isoperimetric inequality]
\label{AppThm:A}
Let $K \subset \R^2$ be a $\lambda$-convex body and $L \subset \R^2$ be a $\lambda$-convex lens. If $|\partial K| = |\partial L|$, then $|K| \ge |L|$, and the equality is only possible if $K$ is a lens.
\end{theorem}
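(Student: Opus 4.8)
The plan is to run, in the plane, the two-step argument of Section~\ref{Sec:ProofA} essentially verbatim. First I would reduce to the case where $K$ is a $\lambda$-convex polygon, i.e.\ a compact intersection of finitely many disks of radius $1/\lambda$. Indeed, by Proposition~\ref{Prop:Approx} there is a nested sequence of $\lambda$-convex polygons $K^j \supseteq K^{j+1} \supseteq K$ with $d_H(K^j,K)\to 0$, hence $|\partial K^j|\to|\partial K|$ and $|K^j|\to|K|$, so the (non-strict) inequality $|K|\ge|L|$ follows by continuity once it is known for polygons. Normalising $\lambda=1$, I would then use, exactly as in Step~I of Section~\ref{Sec:ProofA}, the identity $\tfrac{d}{dt}|K_t| = |\partial K_t|$, which gives $|K| = \int_0^{r(K)} f_K(t)\,dt$ and $|L| = \int_0^{r(L)} f_L(t)\,dt$ with $f_K(t):=|\partial K_t|$, $f_L(t):=|\partial L_t|$ (the perimeters of the inner parallel bodies), and $f_K(0)=f_L(0)$ by hypothesis. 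The goal is the planar analogue of Proposition~\ref{prop_der}: $f_K(t) > f_L(t)$ for all sufficiently small $t>0$ whenever $K$ is not a lens; by the same bootstrapping as in Step~I, this upgrades to $f_K\ge f_L$ on the common domain with strict inequality on some open subset, whence $|K|>|L|$.

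The heart of the matter is the right derivative $\dot f_K(0)$. A $\lambda$-convex polygon with $\lambda=1$ has boundary made of circular arcs $A_i$ of radius $1$, of central angle and length $\psi_i$, meeting at vertices with exterior turning angles $\delta_i>0$, and planar Gauss--Bonnet reads $\sum_i\psi_i + \sum_i\delta_i = 2\pi$. The inner parallel body is $K_t = \bigcap_i B(c_i,1-t)$, again such a polygon with the same centres $c_i$ but arcs of radius $1-t$; an elementary computation with the isoceles triangles $c_i\,v_i\,c_{i+1}$ shows the new central angles are $\tilde\psi_i(t) = \psi_i - t\bigl(\tan\tfrac{\delta_{i-1}}{2}+\tan\tfrac{\delta_i}{2}\bigr) + O(t^2)$, so that
\begin{equation*}
f_K(t) = (1-t)\sum_i \tilde\psi_i(t) = f_K(0) - t\Bigl(f_K(0) + 2\sum_i \tan\tfrac{\delta_i}{2}\Bigr) + O(t^2),
\end{equation*}
i.e.\ $\dot f_K(0) = -f_K(0) - 2\sum_i\tan\tfrac{\delta_i}{2}$, and the same holds for $L$ (with its two symmetric vertices). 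Since $f_K(0)=f_L(0)$, proving $f_K(t)>f_L(t)$ for small $t$ reduces to showing that $\sum_i\tan\tfrac{\delta_i}{2}$ is \emph{maximal} for the lens among $\lambda$-convex polygons of the given perimeter. Here I would invoke Gauss--Bonnet together with the non-degeneracy constraints $\psi_i>0$, which — through the geometry of the polygon of centres — keep each $\delta_i$ bounded away from $\pi$; the lens, with no "wasted'' arc length and its turning concentrated symmetrically at exactly two vertices, is the extremiser, with equality only for lenses.

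Finally, the general case and the equality case are handled exactly as in Step~II of Section~\ref{Sec:ProofA}: the approximation above gives $|K|\ge|L|$, and for the equality case one notes, as in the remark after Step~I, that the polygon estimate also yields $r(K)\ge r(L)$ (the planar reverse inradius inequality); if $K$ is not a $\lambda$-convex polygon, Proposition~\ref{Prop:Reduction} produces a $\lambda$-convex polygon $K''$ with $|\partial K''|=|\partial K|=|\partial L|$ and $r(K'')<r(K)$, forcing $r(K) > r(K'') \ge r(L)$, so the graphs of $f_K$ and $f_L$ cannot coincide and $|K|>|L|$. The step I expect to be the main obstacle is the extremality of the lens for $\sum_i\tan\tfrac{\delta_i}{2}$: unlike the three-dimensional situation, where the analogous edge quantity $\sum l_{ij}\tan\tfrac{\gamma_{ij}}{2}$ appears \emph{directly} in the Gauss--Bonnet relation (as half the spherical-image area of an edge) and is pinned down by it up to a manifestly non-negative vertex term, in the plane it is the turning angle $\delta_i$, not $\tan\tfrac{\delta_i}{2}$, that enters Gauss--Bonnet, so one has to combine the angle sum with the realizability constraints on $\lambda$-convex polygons to conclude that the symmetric two-vertex configuration wins.
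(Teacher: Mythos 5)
Your overall architecture is the same as the paper's: approximation by $\lambda$-convex polygons, the identity $|K|=\int_0^{r(K)}|\partial K_t|\,dt$, the expansion of the right derivative $\dot f_K(0)=-f_K(0)-2\sum_i\tan\frac{\delta_i}{2}$ (your computation of the inner parallel polygon agrees with the paper's), the bootstrapping to $f_K\ge f_L$, and the treatment of equality via Proposition~\ref{Prop:Reduction} and the planar reverse inradius remark (the appendix itself uses a different, quantitative limiting argument for equality, but it explicitly notes that your Step~II-style route also works). However, the one step you leave open --- the extremality of the lens for $\sum_i\tan\frac{\delta_i}{2}$ --- is the mathematical core of the appendix, and the ingredients you gesture at would not close it. Keeping ``each $\delta_i$ bounded away from $\pi$'' is far too weak: it makes no comparison between the angles of $K$ and the lens angle $\gamma_*$, and any polygon whose angles sum to $2\gamma_*$ satisfies it, so it cannot by itself single out the two-vertex configuration or settle the equality case.

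What actually closes the gap in the paper are the two constraints \eqref{Eq:Constraints} together with a convexity-type monotonicity trick. First, Gauss--Bonnet gives $\sum_i\gamma_i=2\pi-|\partial K|=2\pi-|\partial L|=2\gamma_*$, since the total turning $2\pi$ is the arc length (curvature $\equiv 1$) plus the vertex angles. Second, and this is the ingredient missing from your sketch, \emph{each individual} vertex angle satisfies $\gamma_i\le\gamma_*$: the two unit disks whose boundaries carry the arcs adjacent to a vertex both contain $K$ (Theorem~\ref{Thm:Bla}, or simply because $K$ is the intersection of its defining disks), so $K$ lies in the unit lens with vertex angle $\gamma_i$, whose perimeter is $2\pi-2\gamma_i$; perimeter monotonicity under inclusion then yields $2\pi-2\gamma_*=|\partial K|\le 2\pi-2\gamma_i$, i.e.\ $\gamma_i\le\gamma_*$. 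With both constraints in hand, the inequality follows from the fact that $x\mapsto\frac{\tan x}{x}$ is increasing on $[0,\pi/2)$, as in \eqref{Eq:Comp}:
\begin{equation*}
\sum_i\tan\frac{\gamma_i}{2}=\sum_i\frac{\gamma_i}{2}\cdot\frac{\tan\frac{\gamma_i}{2}}{\frac{\gamma_i}{2}}\le\frac{\tan\frac{\gamma_*}{2}}{\frac{\gamma_*}{2}}\sum_i\frac{\gamma_i}{2}=2\tan\frac{\gamma_*}{2},
\end{equation*}
with equality forcing every $\gamma_i=\gamma_*$ and hence, by the angle-sum constraint, exactly two vertices, i.e.\ a lens. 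Without identifying the pointwise bound $\gamma_i\le\gamma_*$ (or an equivalent realizability constraint of comparable strength), your proposal does not prove the key comparison of derivatives, so as written it has a genuine gap at its pivotal step.
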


\begin{proof}[Proof of Theorem~\ref{AppThm:A}]
As in the proof of Theorem~\ref{mainthm}, using Blaschke's rolling theorem we can approximate the $\lambda$-convex body $K$ by $\lambda$-convex polygons. We will establish the required inequality for polygons, and the inequality in the general case follows by the limiting argument. The equality case will be analyzed separately at the end of the proof. 

Assume now that $K$ is a $\lambda$-convex polygon. As in the proof of Theorem~\ref{mainthm}, we have
\[
|K| = \int_{0}^{r(K)} |\partial K_t| dt.
\]

In order to establish the result for $\lambda$-convex polygons it is enough to show that one-sided derivatives at $0$ satisfy
\begin{equation}
\label{Eq:Derivative}
\left.\frac{d}{dt}\right|_{t=0^+} |\partial K_t| \ge \left.\frac{d}{dt}\right|_{t=0^+} |\partial L_t|, \text{ with equality if and only if } K \text{ is a $\lambda$-convex lens}.
\end{equation}
Here $L_t$ is the corresponding inner parallel body at distance $t$ for the lens $L$ isoperimetric to $K$:
\begin{equation}
\label{Eq:Isoperimetric}
|\partial K| = |\partial K_0| = |\partial L_0| = |\partial L|.
\end{equation}
Note that $L_t$ is a $(1/\lambda - t)^{-1}$-convex lens.

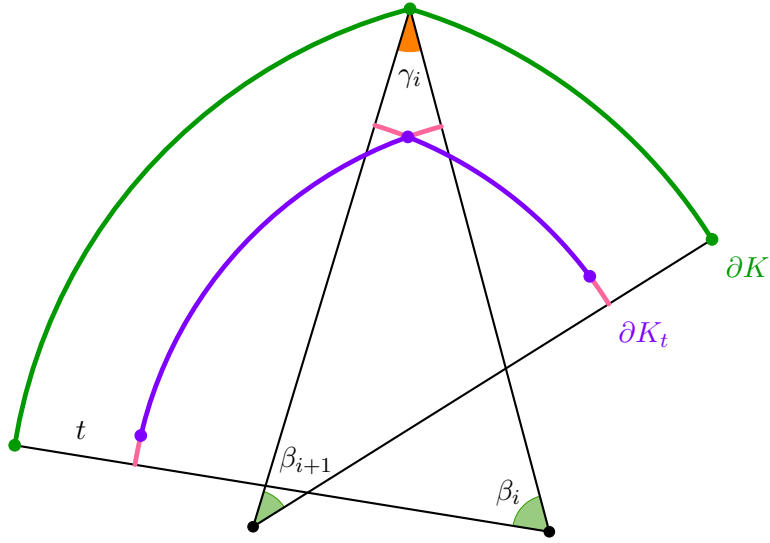
\begin{figure}[h]
\begin{center}
\definecolor{ffxfqq}{rgb}{1.,0.4980392156862745,0.}
\definecolor{ttzzqq}{rgb}{0.2,0.6,0.}
\definecolor{ffwwzz}{rgb}{1.,0.4,0.6}
\definecolor{xfqqff}{rgb}{0.4980392156862745,0.,1.}
\definecolor{qqzzqq}{rgb}{0.,0.6,0.}
\begin{tikzpicture}[line cap=round,line join=round,>=triangle 45,x=1.8cm,y=1.8cm]
\clip(-5.4,-1.5) rectangle (0.3,2.6);
\draw [shift={(-3.516747370550672,-1.3537146742077495)},line width=0.1pt,color=ttzzqq,fill=ttzzqq,fill opacity=0.5] (0,0) -- (32.07285060551666:0.2718507083765637) arc (32.07285060551666:73.15466962016193:0.2718507083765637) -- cycle;
\draw [shift={(-1.3288380581807118,-1.3907978828919858)},line width=0.1pt,color=ttzzqq,fill=ttzzqq,fill opacity=0.5] (0,0) -- (104.90328651767976:0.2718507083765637) arc (104.90328651767976:170.8033441083712:0.2718507083765637) -- cycle;
\draw [shift={(-2.3575909565781377,2.4746474309158417)},line width=0.1pt,color=ffxfqq,fill=ffxfqq,fill opacity=1.0] (0,0) -- (-106.84533037983807:0.31715915977265763) arc (-106.84533037983807:-75.09671348232027:0.31715915977265763) -- cycle;
\draw [shift={(-1.3288380581807118,-1.3907978828919858)},line width=1.8pt,color=qqzzqq]  plot[domain=1.8309077458964884:2.981080728107935,variable=\t]({1.*4.*cos(\t r)+0.*4.*sin(\t r)},{0.*4.*cos(\t r)+1.*4.*sin(\t r)});
\draw [shift={(-3.516747370550672,-1.3537146742077495)},line width=1.8pt,color=qqzzqq]  plot[domain=0.5597768435665227:1.2767898480804951,variable=\t]({1.*4.*cos(\t r)+0.*4.*sin(\t r)},{0.*4.*cos(\t r)+1.*4.*sin(\t r)});
\draw [line width=0.8pt] (-1.3288380581807118,-1.3907978828919858)-- (-2.3575909565781377,2.4746474309158417);
\draw [line width=0.8pt] (-1.3288380581807118,-1.3907978828919858)-- (-5.2774204380579555,-0.7515035928166253);
\draw [line width=0.8pt] (-3.516747370550672,-1.3537146742077495)-- (-2.3575909565781377,2.4746474309158417);
\draw [line width=0.8pt] (-3.516747370550672,-1.3537146742077495)-- (-0.12725286041574613,0.7702737828619208);
\draw [shift={(-3.516747370550672,-1.3537146742077495)},line width=1.8pt,color=ffwwzz]  plot[domain=1.1931381003838295:1.2767898480804951,variable=\t]({1.*3.1*cos(\t r)+0.*3.1*sin(\t r)},{0.*3.1*cos(\t r)+1.*3.1*sin(\t r)});
\draw [shift={(-1.3288380581807118,-1.3907978828919858)},line width=1.8pt,color=ffwwzz]  plot[domain=1.8309077458964884:1.9145594935931538,variable=\t]({1.*3.1*cos(\t r)+0.*3.1*sin(\t r)},{0.*3.1*cos(\t r)+1.*3.1*sin(\t r)});
\draw (-2.52,2.1) node[anchor=north west] {$\gamma_i$};
\draw (-3.4,-0.6904251111730737) node[anchor=north west] {$\beta_{i+1}$};
\draw (-1.8,-0.93) node[anchor=north west] {$\beta_i$};
\draw [color=xfqqff](-0.8925025749133998,0.23386729730723735) node[anchor=north west] {$\partial K_t$};
\draw [color=qqzzqq](-0.1,0.7141368821058304) node[anchor=north west] {$\partial K$};
\draw (-4.9,-0.5) node[anchor=north west] {$t$};
\draw [shift={(-1.3288380581807118,-1.3907978828919858)},line width=1.8pt,color=xfqqff]  plot[domain=1.914559493593154:2.910072450829962,variable=\t]({1.*3.1*cos(\t r)+0.*3.1*sin(\t r)},{0.*3.1*cos(\t r)+1.*3.1*sin(\t r)});
\draw [shift={(-1.3288380581807118,-1.3907978828919858)},line width=1.8pt,color=ffwwzz]  plot[domain=2.910072450829962:2.981080728107935,variable=\t]({1.*3.1*cos(\t r)+0.*3.1*sin(\t r)},{0.*3.1*cos(\t r)+1.*3.1*sin(\t r)});
\draw [shift={(-3.516747370550672,-1.3537146742077495)},line width=1.8pt,color=xfqqff]  plot[domain=0.6399011495765687:1.1931381003838295,variable=\t]({1.*3.1*cos(\t r)+0.*3.1*sin(\t r)},{0.*3.1*cos(\t r)+1.*3.1*sin(\t r)});
\draw [shift={(-3.516747370550672,-1.3537146742077495)},line width=1.8pt,color=ffwwzz]  plot[domain=0.5597768435665228:0.6399011495765687,variable=\t]({1.*3.1*cos(\t r)+0.*3.1*sin(\t r)},{0.*3.1*cos(\t r)+1.*3.1*sin(\t r)});
\begin{scriptsize}
\draw [fill=black] (-3.516747370550672,-1.3537146742077495) circle (2.0pt);
\draw [fill=black] (-1.3288380581807118,-1.3907978828919858) circle (2.0pt);
\draw [color=qqzzqq,fill=qqzzqq] (-2.3575909565781377,2.4746474309158417) circle (2.2pt);
\draw [color=qqzzqq,fill=qqzzqq] (-5.2774204380579555,-0.7515035928166253) circle (2.2pt);
\draw [color=qqzzqq,fill=qqzzqq] (-0.12725286041574613,0.7702737828619208) circle (2.2pt);
\draw [color=xfqqff, fill=xfqqff] (-2.3736387012313185,1.5278304963781533) circle (2.2pt);
\draw [color=xfqqff,fill=xfqqff] (-4.346126021977935,-0.6794798502761881) circle (2.2pt);
\draw [color=xfqqff,fill=xfqqff] (-1.0300675308889735,0.49734539370470765) circle (2.2pt);
\end{scriptsize}
\end{tikzpicture}
\caption{The notation in the proof of Theorem~\ref{AppThm:A}. The corresponding two pairs of adjacent arcs on the boundary of $\partial K$ and $\partial K_t$ are shown in green and violet, respectively. The pink pieces are of length of order $t^2$ for sufficiently small  $t$.}
\label{Fig:2d}
\end{center}
\end{figure}

Without loss of generality, assume $\lambda=1$. Denote by $\beta_i$, $i \in \{1, \ldots, m\}$, the angles that support the arcs of unit circles forming $\partial K$, and denote by $\gamma_i$ the angles between the adjacent arcs (see Figure~\ref{Fig:2d}). Then 
\[
|\partial K| =  \sum\limits_{i=1}^m \beta_i
\]
and by a computation similar to the one in the proof of Theorem~\ref{mainthm}, we get
\[
|\partial K_t| = \sum\limits_{i=1}^m (1-t) \,\beta_i - 2\cdot \sum\limits_{i=1}^m t \cdot \tan{\frac{\gamma_i}2} + O\left(t^2\right) = |\partial K| - t \cdot \sum\limits_{i=1}^m \left(\beta_i + 2 \tan{\frac{\gamma_i}{2}}\right) + O\left(t^2\right).
\]
 Therefore
\[
\left.\frac{d}{dt}\right|_{t=0^+} |\partial K_t| = -  \sum\limits_{i=1}^m \left(\beta_i + 2 \tan{\frac{\gamma_i}2}\right)= - |\partial K| - 2 \cdot \sum\limits_{i=1}^m \tan \frac{\gamma_i}{2}.
\]
In particular, in the case of the lens, this expression reads as
\[
\left.\frac{d}{dt}\right|_{t=0^+} |\partial L_t| = - |\partial L| - 4 \tan \frac{\gamma_*}{2},
\]
where $\gamma_*$ is the angle at the vertex of the lens.

Under assumption~\eqref{Eq:Isoperimetric}, in order to establish \eqref{Eq:Derivative}, we need to show that
\begin{equation}
\label{Eq:Goal}
\sum\limits_{i =1}^m \tan \frac{\gamma_i}{2} \le 2 \tan \frac{\gamma_*}{2}, \quad \text{ with equality only if } m = 2 \text{ and } \gamma_1 = \gamma_2 = \gamma_*.
\end{equation}

Observe that we have the following constrains:
\begin{equation}
\label{Eq:Constraints}
\max \gamma_i \le \gamma_* \qquad \textup{and} \qquad  \sum\limits_{i = 1}^m \gamma_i = 2 \gamma_*.
\end{equation}
The first constraint follows by Blaschke's rolling theorem. The second constraint follows by the fact that the spherical image of a convex curve is equal to $2\pi$ and is a sum of the spherical images of the smooth boundary components (summing up to the length of the boundary in our case) plus the angles at the vertices.

Now, using \eqref{Eq:Constraints} and the fact that $(\tan x)/x$ is an increasing function on $[0, \pi/2)$, we estimate:
\begin{equation}
\label{Eq:Comp}
\sum\limits_{i=1}^m \tan{\frac{\gamma_i}2} = \sum\limits_{i=1}^m \frac{\gamma_i}{2} \cdot \frac{\tan{\frac{\gamma_i}{2}}}{\frac{\gamma_i}{2}} \le \frac{\tan{\frac{\gamma_*}{2}}}{\frac{\gamma_*}{2}} \sum\limits_{i=1}^m \frac{\gamma_i}{2} = 2\tan{\frac{\gamma_*}{2}}.
\end{equation}
Thus, \eqref{Eq:Goal} follows. Observe that we have established Theorem~\ref{AppThm:A} for $1$-polygons and proved the inequality for arbitrary $1$-convex bodies in $\R^2$. Let us now conclude the equality case for general bodies. 

Assume that $\partial K$ has no vertices. It will be clear how to adopt the proof to the case when $\partial K$ has vertices (in some sense, they only help). 

Let $K^j$ be a sequence of $1$-convex polygons approximating $K$, i.e., $K^j \to K$ as $j \rightarrow \infty$ in Hausdorff metric. Let $L^j$ be the corresponding sequence of $1$-convex lenses so that
\[
|\partial K^j| = |\partial L^j| \quad \text{for every }j.
\]
By continuity, it follows that $L^j \to L$, where $L$ is the lens such that $|\partial K| = |\partial L|$. 

Let $\gamma_i^j$ be the angles of $K^j$, $\gamma_*^j$ be the angle of $L^j$, and denote $\alpha^j := \max\limits_i \gamma_i^j$. Since $\partial K$ is smooth, we can choose an approximating sequence so that
\[
\alpha^j \searrow 0.
\]
We also have 
\[
\gamma_*^j \nearrow \gamma_*.
\]
(Note that $\alpha^j \le \gamma_*^j$.) Because of our choice of sequences, we can find $N > 0$ so that
\begin{equation}
\label{Eq:Choice}
\alpha^j < \frac{\gamma_*}{3} < \frac{2\gamma_*}{3} < \gamma_*^j \quad \text{ for all } \quad j \ge N.
\end{equation}

\noindent We claim that for each $j \ge N$, 
\begin{equation}\label{Eq:ClaimApp}
	\left.\frac{d}{dt}\right|_{t=0^+} |\partial K^j_t| \ge \left.\frac{d}{dt}\right|_{t=0^+} |\partial L^j_t| + C,
\end{equation}
where $C=C(\gamma_*) > 0$ depends only on $\gamma_*$.
First observe that a more detailed analysis of \eqref{Eq:Comp} gives the following bound:
\[
\left.\frac{d}{dt}\right|_{t=0^+} |\partial K^j_t| \ge \left.\frac{d}{dt}\right|_{t=0^+} |\partial L^j_t| + \mathcal E_j,
\]
where 
\[
\mathcal E_j = \gamma_*^j \cdot \left(\frac{\tan \frac{\gamma_*^j}{2}}{\frac{\gamma_*^j}{2}} - \frac{\tan \frac{\alpha^j}{2}}{\frac{\alpha^j}{2}}\right)
\]
By the Mean Value Theorem applied to $f(x):= \tan(x) / x$ and using \eqref{Eq:Choice}, for some $\xi \in [\alpha^j/2, \gamma_*^j/2]$, we have the following uniform estimate for $j \ge N$:
\[
\mathcal E_j = \gamma^j_* \cdot f'(\xi) \cdot \frac{\gamma_*^j - \alpha^j}{2} \ge \frac{1}{9} (\gamma_*)^2 \cdot f'(\xi).
\]
The value $f'(\xi)$ coming from the Mean Value Theorem has its minimum on $[0, \gamma_*]$ for our choice of $N$. This minimum is strictly positive and depends only on $\gamma_*$. Hence, again for our choice \eqref{Eq:Choice}, there exists $C > 0$, depending only on $\gamma_*$, such that 
\[
\mathcal E_j \ge C \quad \text{for all}\quad j \ge N.
\]
This translates into the assertion of the claim.

Passing to the limit in \eqref{Eq:ClaimApp}, as $C$ does not depend on $j$, we obtain that 
\[
\left.\frac{d}{dt}\right|_{t=0^+} |\partial K_t| \ge \left.\frac{d}{dt}\right|_{t=0^+} |\partial L_t| + C.
\]
Therefore, $|K| > |L|$, which proves \emph{strict} inequality for $1$-convex bodies with smooth boundaries. If $\partial K$ has vertices, then we can choose an approximating sequence of $1$-convex polygons that does not change the vertices. Repeating the argument of the claim above, we will get a strict inequality unless $K$ is a $1$-convex lens. This concludes the proof of Theorem~\ref{AppThm:A} in the general case.
\end{proof}

\begin{remark}
The argument as in Step II of the proof of Theorem~\ref{mainthm} can be also applied in $\R^2$ to study the equality case. Above, we presented a different proof in order to illustrate another approach of how one can control inequalities under limits.
\end{remark}

\begin{remark}
In $\R^2$ and $\R^3$, it is possible to explicitly compute the volume of a $\lambda$-convex lens as a function of its surface area. 
Using these explicit formulas, we obtain the following reverse isoperimetric inequalities (RIP) as corollaries to Theorem~\ref{mainthm} and Theorem~\ref{AppThm:A}:

\begin{corollary}[Explicit RIP in dimension 2]
If $K \subset \R^2$ is a $\lambda$-convex body, then
\[
\quad |K| \ge \frac{|\partial K|}{2\lambda} - \frac{1}{\lambda^2} \sin \left(\frac{|\partial K| \lambda}{2}\right),
\]
and equality holds if and only if $K$ is a $\lambda$-convex lens.
\qed
\end{corollary}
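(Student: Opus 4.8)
The plan is to derive the explicit inequality as a direct consequence of the two-dimensional reverse isoperimetric inequality (Theorem~\ref{AppThm:A}) together with a closed-form computation of the area of a $\lambda$-convex lens in terms of its perimeter. Concretely, Theorem~\ref{AppThm:A} says that if $L \subset \R^2$ is the $\lambda$-convex lens with $|\partial L| = |\partial K|$, then $|K| \ge |L|$, with equality if and only if $K$ is itself a $\lambda$-convex lens. So the only thing left is to show
\[
|L| = \frac{|\partial L|}{2\lambda} - \frac{1}{\lambda^2}\sin\left(\frac{\lambda |\partial L|}{2}\right),
\]
after which substituting $|\partial L| = |\partial K|$ finishes the proof, the equality characterization being inherited verbatim.

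To carry out the area computation I would parametrize lenses as follows. A $\lambda$-convex lens is the intersection $B(o_1,R)\cap B(o_2,R)$ of two disks of radius $R=1/\lambda$ with distinct centers; place the midpoint of $o_1o_2$ at the origin, $o_1=(-d,0)$, $o_2=(d,0)$ with $0\le d<R$, so the two bounding circles meet at the vertices $(0,\pm h)$ with $h=\sqrt{R^2-d^2}$. Let $\beta$ denote the angle subtended at its center by each of the two circular arcs making up $\partial L$; an elementary computation gives $d=R\cos(\beta/2)$ and $h=R\sin(\beta/2)$, and since a genuine lens is convex one has $\beta\in(0,\pi]$, with $\beta=\pi$ the degenerate case $d=0$ of a full disk. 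Each arc has length $R\beta$, hence $|\partial L| = 2R\beta = 2\beta/\lambda$, i.e.\ $\beta = \lambda|\partial L|/2$. (This is consistent with the Gauss--Bonnet constraint $\lambda|\partial K|\le 2\pi$ from the proof of Theorem~\ref{AppThm:A}, which forces $\beta\le\pi$, so that the argument of the sine below always lies in $(0,\pi]$.)

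Next, the chord joining the vertices $(0,\pm h)$ splits $L$ into two congruent circular segments, each being the segment of a disk of radius $R$ cut off by a chord whose arc subtends the central angle $\beta$; its area is $\tfrac12 R^2(\beta-\sin\beta)$ (circular sector minus isosceles triangle). Therefore
\[
|L| = R^2(\beta-\sin\beta) = \frac{1}{\lambda^2}\left(\frac{\lambda|\partial L|}{2} - \sin\frac{\lambda|\partial L|}{2}\right) = \frac{|\partial L|}{2\lambda} - \frac{1}{\lambda^2}\sin\frac{\lambda|\partial L|}{2},
\]
which is the desired formula; as a sanity check, $\beta=\pi$ yields $|L|=\pi R^2$, the area of a disk of radius $1/\lambda$. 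Combining with $|K|\ge|L|$ gives the stated inequality, and equality holds iff $K$ is a $\lambda$-convex lens by Theorem~\ref{AppThm:A}. I do not expect a genuine obstacle here: the content is entirely in Theorem~\ref{AppThm:A}, and the remaining work is routine plane geometry. The only points deserving a line of care are pinning down the correct range $\beta\in(0,\pi]$ of the shape parameter and noting that the decomposition of the lens into two congruent segments is valid throughout that range.
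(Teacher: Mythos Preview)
Your proposal is correct and matches the paper's approach exactly: the paper presents this corollary with a \qed immediately after stating it, noting only that it follows from Theorem~\ref{AppThm:A} together with the explicit computation of the area of a $\lambda$-convex lens as a function of its perimeter. You have simply written out that routine computation in full, and it is accurate.
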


\begin{corollary}[Explicit RIP in dimension 3]
If $K \subset \R^3$ is a $\lambda$-convex body, then
\[
\quad |K| \ge \frac{|\partial K|^2 \lambda}{96\pi^2} \cdot  \left(12\pi - |\partial K|\lambda^2 \right),
\]
and equality holds if and only if $K$ is a $\lambda$-convex lens.
\qed
\end{corollary}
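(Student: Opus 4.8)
The plan is to derive this corollary directly from Theorem~\ref{mainthm}: once we know that among $\lambda$-convex bodies of prescribed surface area the $\lambda$-convex lens minimizes volume, with equality only for the lens, it remains only to express the volume of a $\lambda$-convex lens as a function of its surface area. So the entire argument reduces to an explicit computation with spherical caps, and I expect no genuine obstacle beyond careful bookkeeping and checking the admissible range of parameters.

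First I would fix notation for the lens. Set $R := 1/\lambda$ and realize the $\lambda$-convex lens $L \subset \R^3$ as the intersection of two balls of radius $R$ whose centers lie at distance $2d$ apart, with $0 < d < R$. The ``equatorial'' disk along which the two balls overlap lies in the interior of $L$, so $\partial L$ is, away from the common rim, the disjoint union of two congruent spherical caps, each lying on a sphere of radius $R$ and each of height $h := R - d \in (0,R)$. By the hat--box theorem the lateral area of such a cap is $2\pi R h$, and its volume is $\frac{1}{3}\pi h^2 (3R - h)$. Hence
\[
|\partial L| = 4\pi R h \qquad\text{and}\qquad |L| = \frac{2}{3}\pi h^2(3R - h).
\]
The first identity gives $h = |\partial L|/(4\pi R) = \lambda\,|\partial L|/(4\pi)$, and substituting this together with $R = 1/\lambda$ into the second identity and simplifying yields
\[
|L| = \frac{|\partial L|^2\,\lambda}{96\pi^2}\,\bigl(12\pi - |\partial L|\,\lambda^2\bigr).
\]
For consistency one should note that any $\lambda$-convex body is contained in a ball of radius $1/\lambda$ by Blaschke's rolling theorem (Theorem~\ref{Thm:Bla}), so by monotonicity of surface area for nested convex bodies $|\partial K|\,\lambda^2 \le 4\pi < 12\pi$, which keeps the right-hand side above nonnegative (and positive unless $K$ is a ball).

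Finally, given an arbitrary $\lambda$-convex body $K \subset \R^3$, let $L$ be the $\lambda$-convex lens with $|\partial L| = |\partial K|$. Theorem~\ref{mainthm} gives $|K| \ge |L|$, with equality precisely when $K$ is a $\lambda$-convex lens; combining this with the displayed formula for $|L|$, in which we now write $|\partial K|$ in place of $|\partial L|$, produces exactly the asserted inequality together with its equality characterization. The only mildly delicate point is the algebraic simplification producing the factor $12\pi - |\partial K|\lambda^2$, and the accompanying check that the parametrization $|\partial L| \leftrightarrow h$ is a bijection onto the relevant range; both are routine.
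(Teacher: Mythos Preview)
Your proposal is correct and follows exactly the approach indicated in the paper: the corollary is stated with a \qed after the remark that one can explicitly compute the volume of a $\lambda$-convex lens as a function of its surface area and then invoke Theorem~\ref{mainthm}. Your spherical-cap computation and the consistency check via Blaschke's rolling theorem supply precisely the details the paper omits.
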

\end{remark}


\section{Lenses vs.\ spindles, an asymptotic competition}
\label{App:Ass}

Here we discuss an extra evidence in support of the Borisenko conjecture. By Theorem~\ref{AppThm:A}, a $\lambda$-convex lens is the solution for the $2$-dimensional reverse isoperimetric problem in $\R^2$. If one would try to conjecture a potential solution for dimension $n \ge 3$, a natural candidate would be a body obtained by rotating this lens. However, there are two ways one can rotate it in order to preserve convexity. The result of rotating the $2$-dimensional lens with respect to the axis of symmetry which realizes the smallest width is the $n$-dimensional lens. By Theorem~\ref{mainthm}, this body is indeed the solution in $\R^3$. 
On the other hand, one can rotate the lens with respect to the axis of symmetry which realizes its diameter. The resulting body, which we call a \emph{spindle} and define more formally below, is a natural `competitor' to the lens for the optimal shape. As we will see below, in $\mathbb R^n$, the lens `wins' this competition asymptotically as $n \to \infty$. Spindles also appear as solutions to other optimization problems with curvature constraints (see, e.g., \cite{BezdekConj, BorDr13}). 

We say that a convex body $\textup{Sp} \subset \R^n$ is a \emph{$\lambda$-convex spindle} if $\textup{Sp}$ is the intersection of all balls of radius $1/\lambda$ containing a pair of given points. The boundary of $\textup{Sp}$ is obtained by taking a smaller arc of a circle of radius $1\slash \lambda$ in the $x_1 x_n$-plane, with both endpoints on the $x_1$-axis, and rotating it in $\R^n$ about the $x_1$-axis (here and below, $x_1, \ldots, x_n$ are the Cartesian coordinates in $\R^n$).

The next proposition shows that asymptotically  a lens has smaller volume than a spindle assuming that their surface areas are the same.

\begin{proposition}[Lens wins at infinity]
	Let $L$ be a $\lambda$-convex lens and $\textup{Sp}$ be a $\lambda$-convex spindle in $\mathbb{R}^n$. If $|\partial L| = |\partial \textup{Sp}|$, then $|L| < |\textup{Sp}|$ as $n \rightarrow \infty.$
\end{proposition}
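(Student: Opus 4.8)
The plan is to replace $L$ and $\textup{Sp}$ by explicit one-dimensional integrals, using that both are bodies of revolution about the $x_1$-axis, and then run a Laplace-type asymptotic analysis in $n$. By scaling we may assume $\lambda=1$. Writing $L=L_a:=\{x\in\R^n:|x-ae_1|\le 1\}\cap\{x:|x+ae_1|\le 1\}$ with $a\in(0,1)$, the slice of $L_a$ at $x_1=h$ is an $(n-1)$-ball of radius $\sqrt{1-(|h|+a)^2}$ and one of its two caps is $\{z:|z|=1,\ z_1\ge a\}$, so with $\omega_{n-1}$ the volume of the unit $(n-1)$-ball and $\sigma_{n-2}=(n-1)\omega_{n-1}$ the area of the unit $(n-2)$-sphere,
\begin{equation*}
|L_a|=2\omega_{n-1}\int_a^1(1-u^2)^{\frac{n-1}{2}}\,du,\qquad |\partial L_a|=2\sigma_{n-2}\int_a^1(1-u^2)^{\frac{n-3}{2}}\,du .
\end{equation*}
For the spindle with vertices at $\pm b\,e_1$, put $c:=\sqrt{1-b^2}$ and $\theta_0:=\arcsin b$; its generating arc $\theta\mapsto(\sin\theta,\cos\theta-c)$, $\theta\in[-\theta_0,\theta_0]$, has unit speed and the slice at $x_1=\sin\theta$ is an $(n-1)$-ball of radius $\cos\theta-c$, whence
\begin{equation*}
|\textup{Sp}_b|=2\omega_{n-1}\int_0^{\theta_0}(\cos\theta-c)^{n-1}\cos\theta\,d\theta,\qquad |\partial\textup{Sp}_b|=2\sigma_{n-2}\int_0^{\theta_0}(\cos\theta-c)^{n-2}\,d\theta .
\end{equation*}
Both $a\mapsto|\partial L_a|$ and $b\mapsto|\partial\textup{Sp}_b|$ are continuous, strictly monotone, with range $(0,\sigma_{n-1})$ (the limit $b\to1$ recovers the full ball), so for a fixed $a\in(0,1)$ there is a unique $b=b_n$, equivalently $c=c_n$, with $|\partial\textup{Sp}_{b_n}|=|\partial L_a|$, and it is enough to show $|L_a|<|\textup{Sp}_{b_n}|$ for all large $n$.

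Next comes the asymptotics. The factor $(1-u^2)^{(n-3)/2}$ concentrates at the endpoint $u=a$, where $1-u^2$ is maximal on $[a,1]$ and decays linearly; the factor $(\cos\theta-c)^{n-2}$ concentrates at $\theta=0$, where $\cos\theta-c$ is maximal and decays quadratically. Laplace's method gives $\int_a^1(1-u^2)^{(n-3)/2}\,du\sim (1-a^2)^{(n-1)/2}/((n-3)a)$ and $\int_0^{\theta_0}(\cos\theta-c)^{n-2}\,d\theta\sim (1-c)^{n-2}\sqrt{\pi(1-c)/(2(n-2))}$, and — what we actually use — the ratio asymptotics
\begin{equation*}
\frac{|L_a|}{|\partial L_a|}\sim\frac{1-a^2}{n-1},\qquad \frac{|\textup{Sp}_b|}{|\partial\textup{Sp}_b|}\sim\frac{1-c}{n-1}
\end{equation*}
(the extra factor $\cos\theta\to1$ at $\theta=0$ is harmless). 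Inserting the two integral asymptotics into the area constraint $|\partial\textup{Sp}_{b_n}|=|\partial L_a|$ and comparing the dominant $O(n)$ terms of the logarithms forces $\log(1-c_n)\to\tfrac12\log(1-a^2)$, i.e.\ $1-c_n\to\sqrt{1-a^2}$; in particular $c_n$ eventually lies in a compact subinterval of $(0,1)$, which retroactively legitimizes the uniformity needed above. Since $|\partial L_a|=|\partial\textup{Sp}_{b_n}|$, the ratio asymptotics now give
\[
\frac{|L_a|}{|\textup{Sp}_{b_n}|}\ \longrightarrow\ \frac{1-a^2}{1-c_n}\ \longrightarrow\ \frac{1-a^2}{\sqrt{1-a^2}}=\sqrt{1-a^2}<1,
\]
because $\sqrt t>t$ for $t=1-a^2\in(0,1)$; hence $|L_a|<|\textup{Sp}_{b_n}|$ for all $n$ large. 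Intuitively, each of the two bodies has volume asymptotic to its surface area times an ``effective waist'', $1-a^2$ for the lens and $1-c_n$ for the spindle, and the common-area constraint makes the spindle's waist the strictly larger one.

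The only real work is in turning the Laplace heuristics into uniform estimates: one should first extract, from crude two-sided bounds on the integrals, an a priori confinement of $c_n$ to a fixed compact subinterval of $(0,1)$, and then verify that the $o(1)$-errors in the ratio asymptotics are uniform over that range of $c$; this is the main obstacle, though it is routine. Two caveats are worth noting: the argument is stated for a fixed $a$ (equivalently, it is uniform only for $a$ in compact subintervals of $(0,1)$, since the gap $\sqrt{1-a^2}<1$ degenerates as $a\to 0$), and the degenerate shapes $a\to 0$ (lens $\to$ ball) and $a\to 1$ (lens $\to$ point), for which $|L|$ and $|\textup{Sp}|$ become asymptotically equal, lie outside the scope of the estimate.
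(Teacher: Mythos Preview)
Your proposal is correct and follows essentially the same approach as the paper: both set $\lambda=1$, write the lens and spindle as bodies of revolution with explicit one-dimensional integrals, apply Laplace's method to extract the leading asymptotics, use the equal-surface-area constraint to relate the two shape parameters in the limit, and then compare the volume-to-surface-area ratios. Your parameters $(a,c)$ correspond to the paper's $(\sqrt{1-h_2^2},h_1)$, and your conclusion $1-c_n\to\sqrt{1-a^2}$ matches the paper's $1-h_1=h_2$; your attention to confining $c_n$ to a compact subinterval and to the fixed-$a$ caveat is a bit more careful than the paper's informal treatment, but the method is the same.
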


\begin{proof}
Without loss of generality, assume $\lambda = 1$. We start with computing surface areas of a lens and a spindle. Consider the circle of radius $1$ in the $x_1x_n$-plane centered at the point $(0, \dots, 0, -h_1)$ for $0 < h_1 < 1$. Let $C_1$ be the arc of this circle above the $x_1$-axis given by the following equation
	$$
	x_n = \sqrt{1 - x_1^2} - h_1,
	$$
	where $x_1$ changes between $0$ and $\sqrt{1 -h_1^2}$.
 By rotating $C_1$ about the $x_1$-axis, we obtain the part of $\textup{Sp}$ in the hyperplane $x_n \ge 0$. Now we compute the surface area of $\textup{Sp}$ using the corresponding formula for the surface area of the body of revolution:
\begin{align*}
	|\partial \textup{Sp}| = 2 \int_{0}^{\sqrt{1-h_1^2}} \left(\sqrt{1 - x_1^2} - h_1\right)^{n-2} \frac{1}{\sqrt{1-x_1^2}}\, dx_1.
\end{align*}
We will  use Laplace's method to obtain the asymptotic behavior of $|\partial \textup{Sp}|$ as $n \rightarrow \infty$ (see, e.g., \cite{BH}). We claim that 
\begin{align}\label{surf_area_sp}
	|\partial \textup{Sp}| &\approx \frac{\sqrt{\pi}}{\sqrt{2}} (1-h_1)^{n-\frac52} \frac1{\sqrt{n}} \left(1 + O\left(\frac1n\right)\right).
\end{align}
The dominant contribution in the asymptotic expansion of the integral above comes from the neighborhood of $x_1 = 0$, so we can consider the integral over $(0, \, \infty)$. 
Also, using the fact that $\sqrt{1 - x_1^2} - h_1 = (1 - h_1)\left(1 - \frac1{2(1-h_1)} x_1^2 + O(x_1^4)\right)$ and that the integral over $(\sqrt{1-h_1^2}, \,\infty)$ is exponentially small, we have
	\begin{align*}
		|\partial \textup{Sp}| &\approx 2(1-h_1)^{n-2} \int_{0}^{\infty} \left(1 - \frac1{2(1-h_1)} x_1^2 + O(x_1^4)\right)^{n-2} e^{\frac{1}{2} x_1^2 + O(x_1^4)}\, dx_1\\
		&=  2(1-h_1)^{n-2} \int_{0}^{\infty} e^{-(n-2)\left(\frac1{2(1-h_1)} x_1^2 + O(x_1^4) \right)} e^{\frac{1}{2} x_1^2 + O(x_1^4)}\, dx_1\\
		&= 2(1-h_1)^{n-2} \int_{0}^{\infty} e^{\frac{-(n-3) - h_1}{2(1-h_1)} x_1^2 - (n-3)O(x_1^4)} \, dx_1\\
		&\approx 2(1-h_1)^{n-2} \int_{0}^{\infty} e^{-(n-3)\left(\frac1{2(1-h_1)} x_1^2 + O(x_1^4) \right)}\, dx_1.
	\end{align*}
Making the change of variables $y^2 = \frac1{2(1-h_1)} x_1^2 + O(x_1^4)$ in the last integral, and using that $
	y = \frac{x_1}{\sqrt{2(1-h_1)}} \left(1 + O(x_1^2)\right) 
	$
	and
	$
	dx_1 = \left(\frac1{\sqrt{2(1-h_1)}} + O(y^2)\right) dy,
	$
we obtain
\begin{align*}
	|\partial \textup{Sp}| &\approx  2(1-h_1)^{n-2} \int_{0}^{\infty} e^{-(n-3)\left(\frac1{2(1-h_1)} x_1^2 + O(x_1^4) \right)}\, dx_1\\
	&= 2(1-h_1)^{n-2}\left[\frac1{\sqrt{2(1-h_1)}}  \int\limits_{0}^{\infty} e^{-(n-3)y^2}  dy +  \int\limits_{0}^{\infty} e^{-(n-3)y^2} O(y^2) dy\right].
\end{align*}
The first integral is just the Gaussian integral
$$
\int\limits_{0}^{\infty} e^{-(n-3)y^2} dy = \frac{\sqrt{\pi}}{2\sqrt{n-3}}.
$$
The second integral we estimate using the change of variables $y = \sqrt{n-3}\,t$ as follows
$$
\int\limits_{0}^{\infty} e^{-(n-3)y^2} O(y^2) dy = \frac{1}{(n-3)^{\frac32}}\int\limits_{0}^{\infty} e^{-t^2} O(t^2) dt.
$$
Therefore, we get \eqref{surf_area_sp}.

Now we can find the surface area of a lens $L$. For this, consider the circle of radius $1$ in the $x_1x_n$-plane centered at the point $(-\sqrt{1 - h_2^2}, \dots, 0, 0)$ for $0 < h_2 < 1$. Let $C_2$ be the arc of this circle  given by the following equation  
$$
x_n = \sqrt{1 - \left(x_1+\sqrt{1 - h_2^2}\right)^2}
$$
where $x_1$ changes between $0$ and $1-\sqrt{1 - h_2^2}$. We claim that 
\begin{align}\label{surf_area_lens}
	|\partial L| &\approx 2 \frac{h_2^{n-1}}{\sqrt{1-h_2^2}}  \frac1n \left(1 + O\left(\frac1n\right)\right).
\end{align}
Neglecting the exponentially small tails, the surface area of $L$ is
\begin{align*}
	|\partial L| &= 2 \int\limits_{0}^{1-\sqrt{1 - h_2^2}} \left(1 - \left(x_1 + \sqrt{1 - h_2^2}\right)^2 \right)^{\frac{n-3}2} dx_1  \\
	&\approx 2 h_2^{n-3} \int\limits_{0}^{\infty} \left(1 - \frac{x_1^2}{h_2^2} - 2\frac{\sqrt{1 - h_2^2}}{h_2^2} x_1 \right)^{\frac{n-3}2} dx_1 \\
	&=  2 h_2^{n-3} \int\limits_{0}^{\infty}  e^{-\frac{n-3}2 \left(2 \frac{\sqrt{1-h_2^2}}{h_2^2}x_1 + O(x_1^2)\right)} dx_1.
\end{align*} 
Making the change of variables $y = 2 \frac{\sqrt{1-h_2^2}}{h_2^2}x_1 + O(x_1^2)$ and using that
\begin{align*}
	dx_1 =  \left(\frac{h_2^2}{2 \sqrt{1-h_2^2}} + O(y)\right)dy,
\end{align*} 
we get
\begin{align*}
	|\partial L| &\approx 2 \, h_2^{n-3} \int\limits_{0}^{\infty} e^{-\frac{n-3}2 y}  \left(\frac{h_2^2}{2 \sqrt{1-h_2^2}} + O(y)\right)dy.
\end{align*} 
By the argument similar to the one above, we obtain \eqref{surf_area_lens}.

In the same manner, we obtain the expression for volumes:
\begin{align*}
	|\textup{Sp}| &\approx  \frac{\sqrt{\pi}}{\sqrt{2}} (1-h_1)^{n-\frac32} \frac1{\sqrt{n}} \left(1 + O\left(\frac1n\right)\right); \\
	|L| &\approx   2 \frac{h_2^{n+1}}{\sqrt{1-h_2^2}}  \frac1n \left(1 + O\left(\frac1n\right)\right),
\end{align*}
where $h_1, h_2 \in (0,1)$.

\noindent Since $|\partial\textup{Sp}| = |\partial L|$, we have
\begin{align*}
(1-h_1)^{n-\frac52}  \left(1 + O\left(\frac1n\right)\right)	= \frac{2\sqrt{2}}{\pi} \frac{h_2^{n-1}}{\sqrt{1-h_2^2}} \frac1{\sqrt{n}} \left(1 + O\left(\frac1n\right)\right).
\end{align*}
This implies that $1-h_1 = h_2$ when $n \rightarrow \infty$. Thus, 
$$
\frac{|\textup{Sp}|}{|\partial\textup{Sp}|} =   1 - h_1 > h_2^2 = \frac{|L|}{|\partial L|}
$$
as $n \rightarrow \infty$ which confirms that a lens has smaller volume than a spindle.

\end{proof}

\end{document}